\date{\today, version 1.00}
\newcommand{\C}{{\mathbb{C}}}
\newcommand{\Q}{{\mathbb{Q}}}
\newcommand{\Qq}{{\mathbb{Q}}}
\newcommand{\R}{{\mathbb{R}}}
\newcommand{\OO}{{\mathcal{O}}}
\newcommand{\Supp}[0]{{\operatorname{Supp}}}
 \newcommand{\bir}{\dashrightarrow}
 \newcommand{\N}{\mathbb{N}}
\DeclareMathOperator{\vol}{vol}
\DeclareMathOperator{\Ivol}{Ivol}
\newtheorem{thm}{Theorem}[section]
\newtheorem{lem}[thm]{Lemma}
\newtheorem{exa}[thm]{Example}
\newtheorem{ques}[thm]{Question}
\newtheorem{claim}[thm]{Claim}
\theoremstyle{definition}
\title{\large V\MakeLowercase{ariations of generalised pairs}}
\author{C\MakeLowercase{aucher} B\MakeLowercase{irkar and} 
C\MakeLowercase{hristopher} D. H\MakeLowercase{acon}}
\begin{document}
\bibliographystyle{amsalpha+}

 \maketitle
 
\abstractname{: 
In this paper we investigate various properties of
generalised pairs in families, especially boundedness of several kinds.
We show that many statements for usual pairs do not hold for generalised pairs. In particular, we construct an unexpected counter-example to boundedness of generalised lc models with fixed appropriate invariants. We also show that the DCC of Iitaka volumes and existence of nef reduction maps fail in families of generalised pairs.

 In a positive direction we show boundedness of bases of log Calabi-Yau fibrations with their  induced generalised pair structure under natural assumptions. Roughly speaking we prove this boundedness for fibrations whose general fibres belong to a bounded family and whose Iitaka volume is fixed. 
}
 
\tableofcontents 

\section{Introduction}

In this paper we work over an algebraically closed field $k$ of characteristic zero.\\

\textbf{Generalised pairs.}
In recent years the theory of generalised pairs has been instrumental in many advances in higher dimensional algebraic geometry. Roughly speaking a generalised pair consists of a pair $(X,B)$ and a nef divisor 
on some birational model of $X$. One of the most natural contexts in which generalised pairs arise is that of varieties that admit a log Calabi-Yau fibration. Calabi-Yau and Fano varieties, their fibre spaces, and their local counterparts play a central role in modern mathematics, and they are all special cases of log Calabi-Yau fibrations. 

A log Calabi-Yau fibration consists of a pair $(V,\Delta)$ with log canonical (lc) singularities together with a contraction $f\colon V\to X$, i.e. a projective morphism with connected fibres, such that $K_V+\Delta\sim_\Q 0$ over $X$ (note that here we allow the extreme cases when $f$ is birational, identity, or even constant). See \cite{B18} for a systematic treatment. The canonical bundle formula, also known as adjunction, says that we can write 
$$
K_V+\Delta\sim_\Q f^*(K_X+B+M)
$$
where $(X,B+M)$ naturally has the structure of a generalised pair: $B$ measures the singularities of the fibres and $M$ measures the variation of the fibres in their moduli space. The geometry of $(V,\Delta)$ is often studied in terms of the geometry of the fibres of $(V,\Delta)\to X$ and of the base $(X,B+M)$; see for example,  \cite{BZ16} and \cite{B19}.

Standard birational geometry conjectures imply that all varieties of intermediate log Kodaira dimension admit such a log Calabi-Yau fibration, after some birational transformations. This constitutes a very large class of varieties, and understanding their geometry is a top priority for higher dimensional geometry and related areas. 

Another natural context in which generalised pairs appear is that of projective pairs $(X,B)$ such that $-(K_X+B)$ is nef. Letting $M:=-(K_X+B)$ we obtain a generalised pair $(X,B+M)$.  Many of the examples studied in this paper are constructed out of such pairs.   

The purpose of this text is to investigate various properties of  
generalised pairs in families, especially boundedness of several kinds, and then present applications to usual pairs and varieties. Generalised pairs behave similarly to the usual pairs in many ways however, as we will see in this paper,
there are several crucial differences and unexpected phenomena. Many of the questions we are interested in can be 
reduced to questions in the context of a fixed family as in the following setup.

\subsection{Setup}\label{setup}
\emph{
Assume $(X,B)$ is a pair where $B$ is a $\Q$-boundary, $M$ is a $\Q$-divisor, and $X\to T$ is a contraction onto a smooth variety. Assume that 
\begin{itemize}
\item $(X,\Supp B\cup \Supp M)$ is relatively log smooth over $T$,
\item for a dense set of closed points $T'\subset T$, $M_t$ is nef for any $t\in T'$ where 
$X_t$ is the fibre over $t$, $B_t=B|_{X_t}$ and $M_t=M|_{X_t}$.
\end{itemize}} 

For each $t\in T'$, $(X_t,B_t+M_t)$ is a generalised pair with nef part $M_t$.  
Note that this does not imply that $M$ is nef over $T$ and so in general $(X,B+M)$ is not a generalised pair.
However, it does follow that $M_\eta$ is nef, where $\eta$ is the generic point of $T$. To see this note that if $A$ is an ample$/T$ divisor and $M_t$ is nef, 
then $M_t+\epsilon A_t$ is ample for any $\epsilon>0$. Since ampleness is an open condition, it follows that $M_\eta+\epsilon A_\eta$ 
is ample for any $\epsilon>0$. In particular, $(X_\eta,B_\eta+M_\eta)$ is a generalised pair. 

Imposing various kinds of conditions on the generalised pairs $(X_t,B_t+M_t)$, for $t\in T'$, 
we would like to see if the same conditions are satisfied for $(X_\eta,B_\eta+M_\eta)$ or 
possibly for all fibers $X_t$ over an open subset $U\subset T$.
This leads to many interesting questions that are motivated by important applications and problems about usual pairs.

Under the assumptions of \ref{setup}, 
assume that $(X_t,B_t+M_t)$ has a (good) minimal model for every $t\in T'$. 
\begin{enumerate}
\item Does $(X_\eta,B_\eta+M_\eta)$ have a (good) minimal model? 

\item Is $R(K_{X_\eta}+B_\eta+M_\eta)$ finitely generated over $k(T)$?

\item Can we construct a minimal model for $(X_t,B_t+M_t)$, for each $t\in T'$, 
so that the minimal models form a bounded family?

\item Does $(X_t,B_t+M_t)$ have a minimal model for every $t$ in some open neighbourhood of $T'$? 

\item Can we run an MMP on $K_X+B+M$ over some open neighbourhood of $T'$ so 
that it terminates with a minimal model of $(X,B+M)$ over that neighbourhood?

\item Does the Iitaka volume ${\rm Ivol}(K_{X_t}+B_t+M_t)$ satisfy the DCC, in particular, is there 
a lower bound for the Iitaka volume? 

\item If $K_{X}+B+M$ is nef over $T$, does there exist a relative nef reduction map 
for $K_X+B+M$ over $T$? 
\end{enumerate}

It is easy to see that the answer to (1) is negative as shown by Example \ref{e-a} below. We will also see in \S 5 that the answer to (2) is negative. This is in stark contrast to the case of usual pairs, that is, when $M=0$, 
because in this case even if $(X_t,B_t)$ has a good minimal model for only one $t$, 
then $(X_\eta,B_\eta)$ has a good minimal model and $R(K_{X_\eta}+B_\eta)$ is finitely generated;  in fact, $(X,B)$ has a good minimal model over $T$ \cite{HMX18}.

\begin{exa}\label{e-a}
\emph{
Assume that $E$ is an elliptic curve over the complex numbers and $X=E\times E\to T=E$ and $M$ is the
Poincar\'e line bundle and $B=0$. Let $T'$ be the set of closed points $t$ such that 
$M_t$ is torsion. Then $K_{X_t}+B_t+M_t$ is torsion for each $t\in T'$, so 
$(X_t,B_t+M_t)$ is already a good minimal model for such $t$. However, 
$K_{X_t}+B_t+M_t$ is not torsion for any $t\notin T'$. In particular, 
$K_{X_\eta}+B_\eta+M_\eta$ is not torsion, so $(X_\eta,B_\eta+M_\eta)$ does not 
have a good minimal model. }
\end{exa}

We will in fact construct counter-examples to Questions (1) and (2) when $K_{X_t}+B_t+M_t$ is big, which is much more subtle (see Section 5). 
In particular, from this we deduce that the generalised lc models of a bounded family of 
generalised pairs do not always form a bounded family, hence we get a 
counter-example to a previously expected conjecture. 

More precisely, 
given $d,p\in \mathbb{N}$ and $v\in \Q^{>0}$, it was expected that the set 
$\mathcal{F}_{glc}(d,p,v)$ of projective 
generalised lc pairs $(X,B+M)$ with nef part $M'$ (on some birational model of $X$) 
such that $\dim X=d$, $pM'$ is Cartier, and $K_X+B+M$ is ample with 
volume $v$, forms a bounded family. 
This was first studied in \cite{Fil18} for surfaces. Boundedness has been confirmed in the generalised klt case more recently \cite{B21} but we will show that it fails in the generalised lc case (see Section 5). In fact this paper started out of trying to understand boundedness of $\mathcal{F}_{glc}(d,p,v)$. 

Although boundedness of $\mathcal{F}_{glc}(d,p,v)$ fails in full generality, we still expect that some weaker variants are true. 

\begin{ques}\label{q-bnd-gldt-model} 
Can we choose a generalised dlt model $(Y,B_Y+M_Y)$ for each $(X,B+M)\in \mathcal{F}_{glc}(d,p,v)$ 
so that the $(Y,B_Y,M_Y)$ form a bounded family? 
\end{ques}

We have a positive answer to this question in dimension $\le 3$ (not included in this paper) and we expect a positive answer in any dimension. Thus we expect a positive answer to Question (3). 

We will see that the answer to Question (6) is also negative already in dimension 2 (see Section 3). 
In a similar context we will show that the answer to Question (7) is also negative (see Section 4).

\bigskip

\textbf{Generalised pairs on bases of fibrations.}
Our next goal is to investigate boundedness of bases of log Calabi-Yau fibrations, more precisely boundedness of the generalised pairs 
on the bases of log Calabi-Yau fibrations given by the canonical bundle formula. 
The following theorem essentially says that considering lc log Calabi-Yau fibrations  where the general fibres are bounded and the Iitaka volume is fixed we get boundedness of their bases.

\begin{thm}\label{t-bnd-base-adjunction} 
Let $d\in \mathbb N$, $\Phi\subset \Q^{\ge 0}$ be a DCC set, and $u,v\in \Q^{>0}$. 
Consider the set of projective pairs $(X,B)$ and $\Q$-divisors $A\ge 0$ on $X$ such that 
\begin{itemize}
\item $(X,B)$ is lc of dimension $d$,
\item $K_X+B$ is semi-ample defining a contraction $f\colon X\to Z$,
\item over the generic point $\eta_Z$: $A$ is ample and contains no non-klt centre of $(X,B)$,
\item the coefficients of $B$ and the horizontal coefficients of $A$ are in $\Phi$, 
\item $\vol(A|_F)=u$ for general fibres $F$ of $f$, and 
\item ${\rm Ivol}(K_X+B)=v$.
\end{itemize}
Then we can write the adjunction formula 
$$
K_X+B\sim_\Q f^*(K_Z+B_Z+M_Z)
$$ 
such that the corresponding set of generalized pairs $(Z,B_Z+M_Z)$ forms a bounded family. In particular there exists a very ample divisor $H$ on $Z$ such that $H^{\dim Z}$, $B_Z\cdot H^{\dim Z-1}$ and $M_Z\cdot H^{\dim Z-1}$ are bounded by a fixed constant.
\end{thm}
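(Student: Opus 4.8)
The plan is to reduce the statement to boundedness of the source fibrations together with a Noetherian-type argument for the moduli divisor, then descend everything to the base. First I would handle the general fibres: since $(X,B)$ is lc of dimension $d$, $K_X+B\sim_\Q 0$ over $\eta_Z$, the coefficients of $B$ lie in the DCC set $\Phi$, and $\vol(A|_F)=u$ with the horizontal coefficients of $A$ in $\Phi$, the general fibre $(F,B_F)$ together with the polarisation $A|_F$ lies in a bounded family by boundedness of lc Calabi-Yau pairs with a bounded polarising divisor (this is where the fixed volume $u$ and the DCC hypothesis on coefficients are essential). One then replaces $A$ by a general member of a bounded multiple so that $(X,B+\delta A)$ is still lc over $\eta_Z$ and run the relative MMP to put the fibration in a standard shape; the key point is that the fibres, the boundary, and the moduli $\mathbf{M}$-part of the canonical bundle formula can all be taken from finitely many families.

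Next I would construct the adjunction formula $K_X+B\sim_\Q f^*(K_Z+B_Z+M_Z)$ and control $M_Z$. The divisor $B_Z$ is the discriminant part and its coefficients lie in a DCC set depending only on $d$ and $\Phi$ by the theory of the canonical bundle formula for generalised pairs; moreover the Iitaka volume being fixed, ${\rm Ivol}(K_X+B)={\rm Ivol}(K_Z+B_Z+M_Z)=v$, bounds the top self-intersection of $K_Z+B_Z+M_Z$ once we pass to a model where it is semi-ample and big over the image. For the moduli part I would use that, after a bounded base change, $\mathbf{M}$ descends to a fixed birational model $Z'\to Z$ as a nef divisor with bounded Cartier index $p$ depending only on $d$, $u$ and $\Phi$ — this is the effective adjunction / boundedness-of-$b$-divisor input — so $M_Z$ is itself controlled.

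Then I would bound $(Z,B_Z+M_Z)$ itself. Since $K_X+B$ is semi-ample and big over $Z$ is not automatic, one first takes the Iitaka fibration and replaces $Z$ by the image, so that $K_Z+B_Z+M_Z$ is big; it is also nef after a model change, and then ampleness on a minimal model follows. Now $(Z,B_Z+M_Z)$ is a generalised lc pair of dimension $\le d$ with $p M_Z'$ Cartier, $K_Z+B_Z+M_Z$ ample, $\mathrm{vol}(K_Z+B_Z+M_Z)=v$, and coefficients of $B_Z$ in a fixed DCC set — one is tempted to invoke boundedness of $\mathcal{F}_{glc}$, but the paper itself shows that fails, so instead I would use that here $\dim Z<d$ and induct on dimension, OR use the generalised-klt boundedness result \cite{B21} after checking that these particular generalised pairs are klt (which should hold because they come from an lc, not strictly lc, Calabi-Yau fibration with klt general fibre). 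The final very ample $H$ with bounded $H^{\dim Z}$, $B_Z\cdot H^{\dim Z-1}$, $M_Z\cdot H^{\dim Z-1}$ is then extracted from the bounded family in the standard way.

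The main obstacle I anticipate is controlling the moduli part $M_Z$: unlike $B_Z$, it is not visibly governed by a DCC or boundedness statement, and the counterexamples earlier in the paper show that naive boundedness of generalised lc pairs fails precisely because of bad behaviour of nef parts. Making the descent of $\mathbf{M}$ to a fixed model with bounded Cartier index work — essentially an effective adjunction statement uniform over the family, which relies crucially on boundedness of the general fibres and the fixed Iitaka volume — is where the real content lies; everything else is assembling known boundedness and MMP inputs.
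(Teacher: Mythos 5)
Your outline gets the easy half of the argument right: boundedness of the general log fibres from $\vol(A|_F)=u$ and the DCC hypothesis, the adjunction formula with $B_Z$-coefficients in a DCC set and bounded Cartier index of the moduli b-divisor on a bounded birational model $Z'$ (this is Lemma \ref{l-adj-fib-bnd-bir-base}, via \cite[Lemma 7.4 and Proposition 5.2]{B21}). But that only yields \emph{log birational} boundedness of $(Z,B_Z+M_Z)$, and the step where you pass from this to actual boundedness is exactly where the proposal breaks down. You correctly note that one cannot quote boundedness of $\mathcal{F}_{glc}$, but neither of your two workarounds is valid. ``Induct on $\dim Z<d$'' does not parse: $\mathcal{F}_{glc}(e,p,v)$ is unbounded already for $e=3$ (Section 5), and $(Z,B_Z+M_Z)$ carries no residual fibration structure on which to induct. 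And $(Z,B_Z+M_Z)$ is \emph{not} generalised klt in the setting of the theorem: $(X,B)$ is only assumed lc, its vertical non-klt centres and multiple fibres produce generalised non-klt places on $Z$ (see the Hirzebruch surface example after the statement of the theorem, where $B_Z$ acquires a coefficient-one component). The klt case is indeed covered by \cite{B21}, as the introduction says, but the whole difficulty of the theorem is the strictly lc case, which your proposal silently assumes away.

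The paper's actual route has two components absent from your sketch. First, a reduction to the case where $(X,B)$ is klt over $\eta_Z$ (Lemma \ref{l-bnd-base-adjunction-lc-to-klt}): one takes a minimal non-klt centre $V$ dominating $Z$, performs divisorial adjunction $K_{V^\nu}+B_{V^\nu}=(K_X+B)|_{V^\nu}$ (Lemma \ref{l-generic-dlt-adjunction}), applies the generic-klt case to $V^\nu\to S$ where $S\to Z$ is the finite part of the Stein factorisation, and then bounds $Z$ itself from the bounded finite cover $S$ and the bounded birational model $Z'$ by a Galois-cover argument (Lemma \ref{l-finite-cover}). Second, in the generic-klt case one does \emph{not} argue on the base alone: one builds an auxiliary klt pair $(Y',\Delta_{Y'})$ with $\Delta_{Y'}=\Gamma_{Y'}-\frac1q\lfloor\Gamma_{Y'}\rfloor+L_{Y'}$ which is $\frac1q$-lc with bounded Iitaka volume, applies the boundedness of polarised Calabi--Yau fibrations \cite[Theorem 1.2]{J22} together with Lemmas \ref{l-bnd-e-lc-vol<v} and \ref{l-delta-lc-fin-coeff-base-adjunction} to bound it crepant-birationally (Lemma \ref{l-bnd-klt-fibs}), and finally exhibits $Z$ as the lc model of a bounded log smooth pair $(V,\Gamma_V)$ by an identification of rings $R(q(K_V+\Gamma_V))=R(q(K_X+B))$, concluding with \cite[Theorem 1.2]{HMX18}. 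In short, the obstacle you flagged (descent of $\mathbf{M}$ with bounded Cartier index) is the routine part; the genuine content, which the proposal does not supply, is the non-klt-centre/Galois-cover reduction and the use of the total space of the fibration to realise $Z$ as a bounded lc model.
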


The theorem is very natural from the point of view of polarised varieties and their moduli \cite{B20}\cite{B21}. Indeed it is a crucial step towards construction of moduli spaces that will be discussed elsewhere. 

The hypothesis of Theorem \ref{t-bnd-base-adjunction} imply that the general fibres are bounded in a uniform sense, by \cite[Corollary 1.8]{B20}. 
Moreover, the assumptions also imply that 
$$
(Z,B_Z+M_Z)\in \mathcal{F}_{glc}(e,p,v)
$$
where $e\le d$ and $p$ is fixed. Although $\mathcal{F}_{glc}(e,p,v)$ is not a bounded family in general but the theorem says that the subset given by bases of log Calabi-Yau fibrations forms a bounded family. When $(X,B)$ is klt this boundedness follows from \cite{B21} but the lc case needs a lot more work as we will see in this paper.

Although the bases $(Z,B_Z+M_Z)$ form a bounded family, the pairs $(X,B)$ do not need to form a bounded family. Indeed here is a simple example. Consider a Hirzebruch surface, i.e. $X={\rm Proj} (\OO_Z\oplus \OO_Z(n))$ over $Z=\mathbb{P}^1$, together with its toric boundary $\Delta$. Let $A$ be a  section of $X\to Z$ which does not contain any non-klt centre of $(X,\Delta)$, and let $G$ be a general fibre. Put $B=\Delta+ G$. Then $(X,B),A\to Z$ satisfies the assumptions of the theorem with $d=2, \Phi=\{1\}, u=1,v=1$. But it is well-known that such $X$ do not form a bounded family. 
\bigskip


\section{Preliminaries}

In this section we collect definitions and results that are used in later sections. We work over an algebraically closed field $k$ of characteristic zero. Varieties are assumed to be quasi-projective over $k$ unless stated otherwise.

\subsection{Contractions}
A \emph{contraction} means a projective morphism $f\colon X\to Y$ of varieties 
such that $f_*\mathcal{O}_X=\mathcal{O}_Y$ ($f$ is not necessarily birational). In this paper we will always work in the setting where $X$ is normal in which case so is $Y$. 

\subsection{Divisors}
Let $X$ be a normal variety, and let $M$ be an $\R$-divisor on $X$. For a prime divisor $D$ we denote its coefficient in $M$ by $\mu_DM$. 
    
We say that $M$ is \emph{b-Cartier} if it is $\R$-Cartier and if there is a birational contraction 
$\phi\colon W\to X$ from a normal variety such that $\phi^*M$  is Cartier.

For a birational map $X\bir X'$ (resp. $X\bir X''$, resp. $X\bir X'''$, and resp. $X\bir Y$) 
whose inverse does not contract divisors, and for 
an $\R$-divisor $M$ on $X$ we usually denote the pushforward of $M$ to $X'$ (resp. $X''$, resp. $X'''$, and resp. $Y$) 
by $M'$ (resp. $M''$, resp. $M'''$, and resp. $M_Y$).

\subsection{b-divisors}\label{ss-b-divisor}

Let $X$ be a normal variety. A \emph{b-divisor} $\mathbf{M}$ over $X$ is a collection of $\R$-divisors 
$M_Y$ on $Y$ for each birational contraction $Y\to X$ from a normal variety that are compatible 
with respect to pushforward, 
that is, if $Y'\to X$ is another birational contraction and $\psi\colon Y'\bir Y$ is a morphism, 
then $\psi_*M_{Y'}=M_Y$. 
 
A b-divisor $\mathbf{M}$ is \emph{b-$\R$-Cartier} if there is a birational contraction $Y\to X$ 
such that ${M}_Y$ is $\R$-Cartier and such that for any birational contraction 
$\psi\colon Y'\to Y/X$ we have ${M}_{Y'}=\psi^*M_Y$. 
In other words, a b-$\R$-Cartier b-divisor over $X$ is determined by the choice of a birational contraction  
$Y\to X$ and an $\R$-Cartier $\R$-divisor $M$ on $Y$. But this choice is not unique,  
that is, another birational contraction $Y'\to X$ and an $\R$-Cartier $\R$-divisor
$M'$ on $Y'$ defines the same b-$\R$-Cartier b-divisor if there is a common resolution $W\to Y$ and $W\to Y'$ 
on which the pullbacks of $M$ and $M'$ coincide.  

A b-$\R$-Cartier  b-divisor  represented by some $Y\to X$ and $M$ is \emph{b-Cartier} if  $M$ is 
b-Cartier, i.e. its pullback to some resolution is Cartier.

\subsection{Pairs}
A \emph{pair} $(X,B)$ consists of a normal quasi-projective variety $X$ and an $\R$-divisor 
$B\ge 0$ such that $K_X+B$ is $\R$-Cartier. 
If the coefficients of $B$ are at most $1$ we say $B$ is a 
\emph{boundary}.

Let $\phi\colon W\to X$ be a log resolution of a pair $(X,B)$. Let $K_W+B_W$ be the 
pulback of $K_X+B$. The \emph{log discrepancy} of a prime divisor $D$ on $W$ with respect to $(X,B)$ 
is $1-\mu_DB_W$ and it is denoted by $a(D,X,B)$.
We say $(X,B)$ is \emph{lc} (resp. \emph{klt}, resp. \emph{$\epsilon$-lc}) 
if $a(D,X,B)$ is $\ge 0$ (resp. $>0$, resp. $\ge \epsilon$) for every $D$. 
Note that if $(X,B)$ is an lc pair, then 
the coefficients of $B$ necessarily belong to $[0,1]$. Also if $(X,B)$ is $\epsilon$-lc, then 
automatically $\epsilon\le 1$ because $a(D,X,B)\leq 1$ for any divisor $D$ on $X$. 

Let $(X,B)$ be a pair. A \emph{non-klt place} of $(X,B)$ is a prime divisor $D$ on 
a birational model of $X$ such that $a(D,X,B)\le 0$. A \emph{non-klt centre} is the image on 
$X$ of a non-klt place. The \emph{non-klt locus} of $(X,B)$ is the union of all the non-klt centres.

\subsection{Minimal model program (MMP)}\label{ss-MMP} 
We will use standard results and concepts of the minimal model program (cf. \cite{KM98} and \cite{BCHM10}). 
Assume $(X,B)$ is a pair, $X\to Z$ is a projective morphism, 
 $H$ is an ample$/Z$ $\R$-divisor, and $K_X+B+H$ is nef$/Z$. Suppose $(X,B)$ is klt or 
that it is $\Q$-factorial dlt. We can run an MMP$/Z$ on $K_X+B$ with scaling of $H$. If 
$(X,B)$ is klt and if either $K_X+B$ or $B$ is big$/Z$, then the MMP terminates \cite{BCHM10}. If $(X,B)$
is $\Q$-factorial dlt, then in general we do not know whether the MMP terminates but 
we know that after finitely many steps of the MMP we reach a model $Y$ on which $K_Y+B_Y$, 
the pushforward of $K_X+B$, is a limit of movable$/Z$ $\R$-divisors: indeed, if the MMP terminates, then 
the claim is obvious; otherwise the MMP produces an infinite sequence $X_i\bir X_{i+1}$ 
of flips and a decreasing sequence $\alpha_i$ of numbers in $(0,1]$ such that 
$K_{X_i}+B_i+\alpha_iH_i$ is nef$/Z$; by \cite{BCHM10} and \cite[Theorem 1.9]{B12}, $\lim\alpha_i=0$; 
in particular, if $Y:=X_1$, then $K_Y+B_Y$ is the limit of the movable$/Z$ $\R$-divisors 
$K_Y+B_Y+\alpha_i H_Y$.

\subsection{Generalised pairs}\label{ss-gpp}
For the basic theory of generalised pairs see \cite[Section 4]{BZ16} and for a survey see 
\cite{B21b}. A \emph{generalised pair} consists of 
\begin{itemize}
\item a normal variety $X$ equipped with a projective
morphism $X\to Z$, 

\item an $\R$-divisor $B\ge 0$ on $X$, and 

\item a b-$\R$-Cartier  b-divisor over $X$ represented 
by some birational contraction $X' \overset{\phi}\to X$ and an $\R$-Cartier divisor
$M'$ on $X'$ such that $M'$ is nef$/Z$ and $K_{X}+B+M$ is $\R$-Cartier,
where $M := \phi_*M'$.
\end{itemize}

We will often refer to such a generalised pair by saying that $(X,B+M)$ is a generalised pair with data $X'\to X$ and $M'$. When $Z$ is a point we omit it and simply say that the pair is projective in which case we also say that $(X,B+M)$ is a generalised pair with nef part $M'$.

 Since a b-$\R$-Cartier b-divisor is defined birationally, 
in practice we will often replace $X'$ with a resolution and replace $M'$ with its pullback.

Now we define generalised singularities.
Replacing $X'$ we can assume $\phi$ is a log resolution of $(X,B)$. We can write 
$$
K_{X'}+B'+M'=\phi^*(K_{X}+B+M)
$$
for some uniquely determined $B'$. For a prime divisor $D$ on $X'$ the \emph{generalised log discrepancy} $a(D,X,B+M)$ is defined to be $1-\mu_DB'$. 

We say $(X,B+M)$ is 
\emph{generalised lc} (resp. \emph{generalised klt}, resp. \emph{generalised $\epsilon$-lc}) 
if for each $D$ the generalised log discrepancy $a(D,X,B+M)$ is $\ge 0$ (resp. $>0$, resp. $\ge \epsilon$).
A \emph{generalised non-klt place} of $(X,B+M)$ is a prime divisor 
$D$ on a birational model of $X$ with 
$$
a(D,X,B+M)\le 0,
$$ 
and a \emph{generalised non-klt centre} of $(X,B+M)$ is the image of a generalised non-klt place.
The \emph{generalised non-klt locus}  of the generalised pair is the union of all 
the generalised non-klt centres.  

Given a generalised lc pair $(X,B+M)$, we can run the MMP on 
$K_X+B+M$ over $Z$ with scaling of an ample divisor if $(X,C)$ is klt for some boundary $C$, 
for example, this is the case when $(X,B+M)$ is $\Q$-factorial generalised dlt 
or when it is generalised klt. The termination of this MMP has not been established in full generality, however it is known that the MMP terminates if $(X,B+M)$ is generalised klt and  
$K_X+B+M$ or $B+M$ is big over $Z$. 
For a more detailed discussion of the MMP on generalised pairs, see \cite[Lemma 4.4]{BZ16}.

\subsection{Families of generalised pairs}\label{d-fam-glc-model}
Let $d\in \mathbb{N}$, $\Phi\subset \mathbb{Q}^{\ge 0}$, and $v\in \Q^{>0}$. 
Let $\mathcal{G}_{glc}(d,\Phi)$ be the set of projective 
generalised pairs $(X,B+M)$ with nef part $M'$ such that 
\begin{itemize}
\item $(X,B+M)$ is generalised lc of dimension $d$, 
\item the coefficients of $B$ are in $\Phi$, and
\item $M'=\sum \mu_i M_i'$ where $\mu_i\in \Phi$ and $M_i'$ are nef Cartier.
\end{itemize}
Let 
$\mathcal{F}_{glc}(d,\Phi,v)$ be the set of those $(X,B+M)\in \mathcal{G}_{glc}(d,\Phi)$
such that
\begin{itemize}
\item $K_X+B+M$ is ample with volume $\vol(K_X+B+M)=v$.
\end{itemize}
We define $\mathcal{F}_{gklt}(d,\Phi,v)$ similarly by replacing the generalised lc condition with generalised klt, 
and further define 
$\mathcal{F}_{gklt}(d,\Phi,<v)$ similarly by replacing the condition 
$$
\vol(K_X+B+M)=v
$$ 
with 
$$
\vol(K_X+B+M)<v.
$$

\subsection{Bounded families}\label{ss-bnd-couples}
A \emph{couple} $(X,D)$ consists of a normal projective variety $X$ and a  divisor 
$D$ on $X$ whose non-zero coefficients are all equal to $1$, i.e. $D$ is a reduced divisor. 
The reason we call $(X,D)$ a couple rather than a pair is that we are concerned with 
$D$ rather than $K_X+D$ and we do not want to assume $K_X+D$ to be $\Q$-Cartier 
or with nice singularities. Two couples $(X,D)$ and $(X',D')$ are isomorphic if 
there is an isomorphism $X\to X'$ mapping $D$ onto $D'$.

We say that a set $\mathcal{P}$ of couples of dimension $\le d$ is \emph{bounded} 
if there is $r\in \mathbb N$ such that for each $(X,D)\in \mathcal{P}$ we can find a very 
ample divisor $A$ on $X$ so that $A^{\dim X}\le r$ and $D\cdot A^{\dim X-1}\le r$. This is  
equivalent to saying that there exist 
finitely many projective morphisms $V^i\to T^i$ of varieties and reduced divisors $C^i$ on $V^i$ such that for each $(X,D)\in \mathcal{P}$ there exist an $i$, a closed point $t\in T^i$, and an 
isomorphism $\phi\colon V^i_t\bir X$ such that $(V^i_t,C^i_t)$ is a couple and 
$\phi_*C_t^i\ge D$.

A set $\mathcal{Q}$ of projective lc pairs $(X,B)$ is said to be bounded if the $(X,\Supp B)$ 
form a bounded family of couples.

Assume $\Phi\subset \mathbb{Q}^{\ge 0}$ such that $0$ is not an accumulation point of $\Phi$, e.g. when $\Phi$ is DCC.  
We say that a set $\mathcal{E}\subset \mathcal{G}_{glc}(d,\Phi)$ forms a bounded family if 
there is $r\in \mathbb N$ such that for each $(X,B+M)\in \mathcal{E}$ there is a very ample 
divisor $A$ on $X$ with 
$$
A^d\le r \ \ \mbox{and} \ \  (K_X+B+M)\cdot A^{d-1}\le r.
$$ 
In particular this implies that the $(X,\Supp B)$ form a bounded family of couples. However, since $M$ is not necessarily effective, we cannot control $\Supp M$. In practice we can only bound $\Supp M$ up to $\Q$-linear equivalence. 

Some of the main results on boundedness of varieties can be found in \cite{HMX18}, \cite{B21a}, \cite{B20} where general type, Fano, and Calabi-Yau pairs are treated, respectively (see also \cite{HMX13},\cite{HMX14}), \cite{B19}).

\subsection{Volume of divisors} For a $\Q$-divisor $D$ on a normal projective variety with Kodaira dimension $\kappa(D)\ge 0$, we define the Iitaka volume to be 
$$
{\rm Ivol}(D)=\limsup_{m\in \mathbb{N}}\frac{\kappa(D)! \ h^0(X,mD)}{m^{\kappa(D)}}.
$$
When $D$ is big, this is also called volume of $D$ and denoted $\vol(D)$. When $D$ is semi-ample, it defines a contraction $f\colon X\to Z$ and $D\sim_\Q f^*H$ for some ample $\Q$-divisor $H$; in this case, $\Ivol(D)=H^{\dim Z}$.

\subsection{Semi-ample families}
A semi-ample family $(X,B)\to Z$ consists of an lc pair $(X,B)$ and a contraction $X\to Z$ such that $K_X+B$ is semi-ample$/Z$.

\begin{lem}\label{l-s-ample-family}
Let $(X,B)$ be a pair and $X\to Z$ be a contraction. Assume that there is a dense set of 
closed points $z_i\in Z$ so that $K_{F_i}+B_{F_i}:=(K_X+B)|_{F_i}$ is semi-ample and $(F_i,B_{F_i})$ is lc for each $i$, where $F_i$ is the fibre over $z_i$. 
Then $K_X+B$ is semi-ample over some non-empty open subset of $Z$.  
\end{lem}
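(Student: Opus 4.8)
The plan is to descend semi-ampleness from the general fibres to the generic fibre $X_\eta$ and then spread out. First, since the locus of $z\in Z$ for which $(X_z,B_z)$ is lc is constructible and contains a dense set of closed points, it contains a non-empty open subset; after shrinking $Z$ we may assume $X\to Z$ is flat and $(X_z,B_z)$ is lc for every $z$, so in particular $(X,B)$ is lc. Next, taking a log resolution $\phi\colon W\to X$ and letting $\Gamma_W$ be the strict transform of $B$ plus the reduced $\phi$-exceptional divisor, we get a log smooth (in particular $\Q$-factorial dlt) pair with $K_W+\Gamma_W=\phi^*(K_X+B)+F$, where $F\ge 0$ is $\phi$-exceptional; shrinking $Z$ again we may assume $(W,\Gamma_W)$ is log smooth over $Z$. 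Since $(W_z,\Gamma_{W,z})$ has a good minimal model if and only if $(X_z,B_z)$ does, and since a single MMP$/X$ on $K_W+\Gamma_W$ contracting $F$ will transfer semi-ampleness of $K_W+\Gamma_W$ over an open subset back to $K_X+B$, we may replace $(X,B)$ by $(W,\Gamma_W)$ and assume $(X,B)$ is $\Q$-factorial dlt and log smooth over $Z$.

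\emph{Nef-ness on $X_\eta$.} Fix an ample$/Z$ divisor $A$ on $X$. For each point $z_i$ of the given dense set, $(K_X+B)|_{F_i}=K_{F_i}+B_{F_i}$ is nef, so $(K_X+B+\epsilon A)|_{F_i}$ is ample for every $\epsilon>0$. Since ampleness is an open condition in families, the locus of $z$ over which $K_X+B+\epsilon A$ restricts to an ample divisor is open and meets $\{z_i\}$, hence contains the generic point $\eta$. Thus $K_{X_\eta}+B_\eta+\epsilon A_\eta$ is ample over $k(Z)$ for all $\epsilon>0$, so $K_{X_\eta}+B_\eta$ is nef over $k(Z)$; this is exactly the argument used for the nef part in \ref{setup}.

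\emph{Good minimal model and semi-ampleness.} Each $(F_i,B_{F_i})$ is lc with $K_{F_i}+B_{F_i}$ semi-ample, so it is its own good minimal model; in particular it has one. By invariance of the existence of good minimal models in families \cite{HMX18} (and its extension to the dlt/lc setting), $(X,B)$ then has a good minimal model over $Z$, so $(X_\eta,B_\eta)$ has a good minimal model $(Y_\eta,B_{Y_\eta})$ over $k(Z)$. Since $K_{X_\eta}+B_\eta$ is moreover nef, it is semi-ample: as $X_\eta$ is $\Q$-factorial with $K_{X_\eta}+B_\eta$ nef it is itself a minimal model, any two minimal models of a dlt pair agree in codimension one, and on a common resolution the negativity lemma applied in both directions (using that the nef pullbacks $p^*(K_{X_\eta}+B_\eta)$ and $q^*(K_{Y_\eta}+B_{Y_\eta})$ are nef hence relatively nef over each model) forces $p^*(K_{X_\eta}+B_\eta)=q^*(K_{Y_\eta}+B_{Y_\eta})$, and the latter is semi-ample. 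Hence $K_{X_\eta}+B_\eta$ is semi-ample over $k(Z)$.

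\emph{Spreading out and the main obstacle.} Semi-ampleness of $K_{X_\eta}+B_\eta$ over $k(Z)$ means it defines a contraction $X_\eta\to W_\eta$ with $K_{X_\eta}+B_\eta\sim_\Q$ the pullback of an ample$/k(Z)$ divisor; spreading this out over a non-empty open $U\subseteq Z$ gives a contraction $X_U\to W_U$ over $U$ with $K_{X_U}+B_U\sim_\Q$ the pullback of a $U$-ample divisor, so $K_X+B$ is semi-ample over $U$, and undoing the log resolution reduction by one MMP$/X$ finishes the proof. The delicate input is the use of \cite{HMX18} in the dlt/lc rather than the klt or log smooth setting, together with the ``nef $+$ good minimal model $\Rightarrow$ semi-ample'' step, which rests on the fact that minimal models of lc pairs coincide in codimension one; the remaining steps are routine constructibility and spreading-out arguments.
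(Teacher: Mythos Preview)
There is a genuine gap. After you replace $(X,B)$ by the log resolution pair $(W,\Gamma_W)$, the hypothesis that the log canonical divisor is semi-ample (hence nef) on the special fibres is \emph{lost}: on the fibre over $z_i$ one has $(K_W+\Gamma_W)|_{W_{z_i}}=\phi_i^*\bigl((K_X+B)|_{F_i}\bigr)+F|_{W_{z_i}}$ with $F\ge 0$ exceptional, and whenever $F$ has a component horizontal over $Z$ (which happens unless the original $(X,B)$ was already generically log smooth over $Z$) the restriction $F|_{W_{z_i}}$ is a nonzero effective exceptional divisor and the new fibrewise log canonical is not nef. Your entire ``Nef-ness on $X_\eta$'' paragraph then fails, and with it the comparison-of-minimal-models step. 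In fact, for the same reason $K_W+\Gamma_W$ can \emph{never} be semi-ample over any open subset of $Z$ when $F$ is horizontal: semi-ampleness over $U$ would give nefness over $X_U$, and the negativity lemma over $X_U$ would force $F=0$ there. So the very goal you set up after the reduction is unachievable.

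The paper sidesteps this by \emph{not} replacing $(X,B)$ with the log smooth model. It first passes to a $\Q$-factorial dlt model of $(X,B)$, which is crepant, so $K_{F_i}+B_{F_i}$ remain semi-ample on the fibres. The log smooth $(W,B_W)$ is introduced only as an auxiliary object to invoke \cite[Theorem~1.2]{HMX18} and conclude that $(X,B)$ has a good minimal model over $Z$; one then runs an MMP on $K_X+B$ itself over $Z$, which terminates by \cite[Theorem~1.9]{B12}. The key observation you are missing is that every extremal ray contracted in this MMP is disjoint from each $F_i$ (since $K_{F_i}+B_{F_i}$ is nef), hence disjoint from the generic fibre; after shrinking $Z$ the MMP is therefore an isomorphism, and $K_X+B$ is already semi-ample over $Z$.
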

\begin{proof}
Shrinking $Z$ we can assume that $(X,B)$ is lc because a dense set of its log fibres are lc. 
Moreover, replacing $(X,B)$ with a dlt model, we can assume $(X,B)$ is dlt. 
By the dlt condition, there is a log resolution $\phi\colon W\to X$ so that $a(D,X,B)>0$ 
for every prime exceptional divisor $D$ of $\phi$. 
Let $B_W$ be the sum of the reduced exceptional divisor of $\phi$ and the birational transform of $B$.
Shrinking $Z$, we can assume that $(W,B_W)$ is relatively log smooth over $Z$. 

Let $F_i,G_i$ be the fibres of $X\to Z$ and $W\to Z$ over $z_i$, respectively. 
Let $K_{F_i}+B_{F_i}:=(K_X+B)|_{F_i}$ and $K_{G_i}+B_{G_i}:=(K_W+B_W)|_{G_i}$. 
Shrinking $Z$, we can assume that $(F_i,B_{F_i})$ is a good minimal model of $(G_i,B_{G_i})$ 
for every $i$. Then by \cite[Theorem 1.2]{HMX18}, $(W,B_W)$ has a good minimal model 
$(Y,B_Y)$ over $Z$. Thus $(Y,B_Y)$ is also a good minimal model of $(X,B)$, and running an MMP 
on $K_X+B$ over $Z$ with scaling of an ample divisor ends with a good minimal model \cite[Theorem 1.9]{B12}. Replace $(Y,B_Y)$ with this minimal model. Since the $K_{F_i}+B_{F_i}$ are nef,
we can assume that the extremal rays contracted in the course of the MMP do not intersect any of the $F_i$, 
i.e. the curves generating such rays do not intersect $F_i$. Therefore, these extremal rays do not intersect 
the generic fibre of $X\to Z$. So shrinking $Z$ we can assume that $X\bir Y$ is an isomoprhism, hence 
we can assume $K_X+B$ is already semi-ample over $Z$.
 
\end{proof}

\begin{lem}\label{l-universal-family}
Let $d,v\in \mathbb N$ and $\Phi\subset \Q^{\ge 0}$ be a finite set.  
Assume that $f_i:(X_i,B_i)\to Z_i$ is a sequence of semi-ample families and $H_i$ are divisors on $X_i$ such that 
\begin{itemize}
\item $d=\dim X_i>\dim Z_i$,
\item the horizontal$/Z_i$ coefficients of $B_i$ are in $\Phi$, 
\item $H_i$ is very-ample over $Z_i$ with 
$$
(H_i|_{F_i})^{\dim F_i}\le v \ \ \mbox{and} \ \ 
(H_i|_{F_i})^{\dim F_i-1}\cdot B_i|_{F_i}\le v
$$ 
for the general fibres $F_i$ of $X_i\to Z_i$.
\end{itemize}
Then, perhaps after replacing the sequence with an infinite subsequence, there exists a semi-ample family 
$(\mathcal{V},\mathcal{B})\to V$ such that for each $i$ we can find a rational map $Z_i\bir V$ so that over the generic point $\eta_{Z_i}$ the family $(X_i,B_i)\to Z_i$ is the pullback of 
$(\mathcal{V},\mathcal{B})\to V$ via $Z_i\bir V$. Moreover, the images of $\eta_{Z_i}$ in $V$ form a dense set.
\end{lem}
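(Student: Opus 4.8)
\emph{The plan} is to show first that the general fibres $F_i$ of the $f_i$, equipped with the restrictions of the $H_i$, form a bounded family of polarised pairs; then to carve out of a parameter space for this family a single irreducible base $V$ over which each generic fibre $X_{i,\eta}\to \Spec k(Z_i)$ is obtained by base change; and finally to invoke Lemma \ref{l-s-ample-family} to upgrade the resulting family over $V$ to a semi-ample family. First I would pass to an infinite subsequence so that $\dim Z_i=e$ is independent of $i$; then the general fibres $F_i$ all have dimension $n:=d-e$. Since $H_i|_{F_i}$ is very ample with $(H_i|_{F_i})^n\le v$, the variety $F_i$ is a non-degenerate subvariety of degree $\le v$ in $\mathbb{P}(H^0(F_i,H_i|_{F_i})^\vee)$, so $h^0(F_i,H_i|_{F_i})\le N+1$ for a fixed $N=N(d,v)$, and $F_i$ embeds in $\mathbb{P}^N$ as a point of one of finitely many components of the Hilbert scheme. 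The bound $(H_i|_{F_i})^{n-1}\cdot B_i|_{F_i}\le v$ together with the coefficients of $B_i|_{F_i}$ lying in the finite set $\Phi$ also bounds $\Supp(B_i|_{F_i})$. Hence there is a scheme $S$ of finite type over $k$, a finite disjoint union of locally closed subschemes of flag Hilbert schemes, carrying a universal family: a projective morphism $\mathcal W\to S$ with a divisor $\mathcal C$ and a relatively very ample divisor $\mathcal H$, such that (after stratifying $S$ and decomposing $\mathcal C$ according to the finitely many coefficients that occur) each $(F_i,B_i|_{F_i},H_i|_{F_i})$ is isomorphic to a closed fibre $(\mathcal W_s,\mathcal C_s,\mathcal H_s)$.

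Next I would produce the moduli maps. For each $i$, shrinking $Z_i$ to a non-empty open $Z_i^\circ$, I may assume that $f_{i,*}\mathcal O_{X_i}(H_i)$ is free, that $B_i$ restricts to a divisor on every fibre over $Z_i^\circ$, and that all these fibres are general fibres of $f_i$; trivialising $f_{i,*}\mathcal O_{X_i}(H_i)$ embeds $X_i$ over $Z_i^\circ$ into $\mathbb{P}^N\times Z_i^\circ$ and yields a morphism $Z_i^\circ\to S$, hence a rational map $Z_i\bir S$, which sends a closed point $z$ to $[(F_z,B_z,H_z)]$. Let $s_i\in S$ be the image of the generic point $\eta_{Z_i}$; its residue field is contained in $k(Z_i)$. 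Choosing a $k(Z_i)$-basis of $H^0(X_{i,\eta},H_i|_{X_{i,\eta}})$ gives the corresponding embedding of $X_{i,\eta}$ into $\mathbb{P}^N_{k(Z_i)}$, and since $S$ carries a universal family the base change of $(\mathcal W,\mathcal C)\to S$ along the resulting map $\Spec k(Z_i)\to S$ (with image $s_i$) is isomorphic to $(X_{i,\eta},B_{i,\eta})\to \Spec k(Z_i)$.

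Then I would choose the base $V$ and conclude. Among the irreducible closed subsets $V\subseteq S$ for which $\{s_i:i\in\mathbb{N}\}\cap V$ is infinite, pick one minimal with respect to inclusion, which exists as $S$ is Noetherian. Then $\{s_i\}\cap V$ is dense in $V$: otherwise its closure is a proper closed subset of $V$ and one of its finitely many irreducible components would contain infinitely many of the $s_i$, contradicting minimality. Replacing the sequence by the subsequence of those $i$ with $s_i\in V$, set $(\mathcal V,\mathcal B):=(\mathcal W,\mathcal C)|_V\to V$; for each such $i$ the rational map $Z_i\bir S$ has image in $V$, so it defines $Z_i\bir V$ with the required base-change property over $\eta_{Z_i}$, and the images $s_i$ of the $\eta_{Z_i}$ are dense in $V$. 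After shrinking $V$ I may assume $\mathcal V\to V$ is a contraction and $(\mathcal V,\mathcal B)$ is a pair. The closed points of $V$ of the form $[(F_z,B_z,H_z)]$ for closed $z\in Z_i^\circ$ are dense in $V$, because the $s_i$ are dense and each $Z_i^\circ\to V$ has dense image in $\overline{\{s_i\}}$; for such a point $w$ one has $(\mathcal V_w,\mathcal B_w)\cong(F_z,B_z)$, which is lc with $K_{F_z}+B_z$ semi-ample, being the restriction of $K_{X_i}+B_i$ to a general fibre of the semi-ample family $(X_i,B_i)\to Z_i$. By Lemma \ref{l-s-ample-family}, $K_{\mathcal V}+\mathcal B$ is semi-ample over a non-empty open $V^\circ\subseteq V$; replacing $V$ by $V^\circ$ and passing to the subsequence of those $i$ with $s_i\in V^\circ$, which keeps the images dense since a dense set meets every non-empty open in a dense subset, gives the desired semi-ample family $(\mathcal V,\mathcal B)\to V$.

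The step I expect to be the main obstacle is the second one: passing from mere boundedness of the general fibres to the assertion that each generic fibre $(X_{i,\eta},B_{i,\eta})$ is literally a base change of one fixed family. This is what forces the use of (flag) Hilbert schemes, which are a fine moduli for the embedded objects, rather than a coarse moduli space, and the careful tracking of the choice of a $k(Z_i)$-basis of sections; the Noetherian minimality argument producing a base $V$ in which the points $s_i$ are dense is the other place that needs some care.
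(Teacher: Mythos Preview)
Your argument follows the same architecture as the paper's proof: bound the general fibres, produce a parametrising space carrying a universal family, carve out an irreducible base $V$ in which the images $s_i$ of the $\eta_{Z_i}$ are dense, and then invoke Lemma~\ref{l-s-ample-family}. The difference is in the parametrising space. The paper appeals to Koll\'ar's moduli of strongly embedded $(r,\alpha,\mathbb{P}^n)$-marked locally stable families (\cite{K21}, \cite[Theorem~7.2]{B20}): after fixing the coefficient vector $\alpha=(b_1,\dots,b_l)$, replacing $H_i$ by a bounded multiple so that the cohomological conditions for a strong embedding hold, and shrinking $Z_i$, each $(X_i,B_i)\to Z_i$ is the pullback of a universal family $(\mathcal{E},\mathcal{D})\to E$ via a morphism $Z_i\to E$, with the degree bounds forcing the $Z_i$ into a fixed finite-type piece of $E$. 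You substitute flag Hilbert schemes and a stratification by coefficients, which is more elementary and avoids importing the locally stable machinery; your Noetherian-minimality extraction of $V$ is essentially the paper's step of replacing $V$ by the closure of $\{\lambda_i(\eta_{Z_i})\}$.

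The trade-off shows at the line ``after shrinking $V$ I may assume \ldots\ $(\mathcal{V},\mathcal{B})$ is a pair''. In the paper's framework this is automatic: a locally stable family is slc by definition, so $K_{\mathcal{V}}+\mathcal{B}$ is $\Q$-Cartier and the hypotheses of Lemma~\ref{l-s-ample-family} (which assumes $(X,B)$ is a pair) are met. Over a bare flag Hilbert scheme, normality of $\mathcal{V}$ follows after shrinking since normality of fibres is open in flat families, but knowing that a dense set of fibres $(\mathcal{V}_v,\mathcal{B}_v)$ is lc does not by itself make $K_{\mathcal{V}}+\mathcal{B}$ $\Q$-Cartier on the total space; you still need to cut down to the locus where the family is locally stable (equivalently, where $K_{\mathcal{V}/V}+\mathcal{B}$ is relatively $\Q$-Cartier), which is again a constructible condition supplied by Koll\'ar's theory. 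This is not a fatal gap---the fix is standard---but it is exactly the technical point the paper's choice of moduli functor is designed to absorb.
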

\begin{proof}
Note that pullback here means the following where we shrink $Z_i$ if necessary. 
First $X_i=Z_i\times_V\mathcal{V}$. 
Let $\mathcal{V}^0$ be the largest open subset of $\mathcal{V}$ on which 
$(\mathcal{V},\mathcal{B})\to V$ is relatively log smooth, and let $\mathcal{B}^0=\mathcal{B}|_{\mathcal{V}^0}$. 
Then taking the closure of $B_i^0=Z_i\times_V\mathcal{B}^0$ inside $X_i$ gives $B_i$.

Our assumptions imply that the log general fibres $(F_i,B_{F_i})$ of the $f_i$ all belong to a bounded family of pairs. 
In particular, replacing the sequence $\Xi:=\{(X_i,B_i)\to Z_i\}$ with an infinite subsequence 
and perhaps shrinking $Z_i$, discarding vertical components of $B_i$ and rearranging the indices, 
we can assume $B_i=\sum_{j=1}^l b_jB_{i,j}$ where $B_{i,j}$ are the irreducible components of 
$B_i$ and the $b_j$ are independent of $i$. Moreover, we can assume that $r:=\dim F_i$ is independent of $i$. 
Let $\alpha=(b_1,\dots,b_l)$ where we allow the possibility that $l=0$ and $\alpha=\emptyset$. 
Then further shrinking $Z_i$ we see that $(X_i,B_i)\to Z_i$ is a $(r,\alpha)$-marked locally stable family according to \cite[7.1(3)]{B20}.

On the other hand, perhaps after replacing $H_i$ with a bounded multiple and shrinking $Z_i$ 
we can assume that $H_i$ defines an embedding $g_i\colon X_i\to \mathbb{P}^n_{Z_i}$, for some fixed $n$, 
so that $R^kf_{i_*}\OO_{X_i}(H_i)\simeq R^k\pi_{i_*}\OO_{\mathbb{P}^n_{Z_i}}(1)$ for all $k\ge 0$ where $\pi_i: \mathbb{P}^n_{Z_i}\to Z_i$ is the projection and $f_i=\pi _i\circ g_i$. 
Then $(X_i,B_i)\to Z_i$ is a strongly embedded $(r,\alpha,\mathbb{P}^n)$-marked 
locally stable family according to \cite[7.1(5)]{B20}.

The moduli functor of strongly embedded $(r,\alpha,\mathbb{P}^n)$-marked 
locally stable families over reduced schemes is represented by a reduced separated scheme ${E}$ equipped with a universal family $(\mathcal{E},\mathcal{D})\to E$ \cite{K21} (also see \cite[Theorem 7.2]{B20}). In particular, each $(X_i,B_i)\to Z_i$ is the pullback of $(\mathcal{E},\mathcal{D})\to E$ with respect to some morphism $Z_i\to E$ (this kind of pullback is explained in \cite[7.1(2)]{B20}).  Our assumptions 
$$
(H_i|_{F_i})^{\dim F_i}\le v ~~\mbox{and}~~ (H_i|_{F_i})^{\dim F_i-1}\cdot B_i|_{F_i}\le v
$$ 
ensure that all the $Z_i$ map into a fixed finite type subscheme of $E$.
Therefore, there exists a strongly embedded $(r,\alpha,\mathbb{P}^n)$-marked 
locally stable family $(\mathcal{V},\mathcal{B})\to V$ where $\mathcal{V},V$ are of finite type, so that each family $(X_i,B_i)\to Z_i$ is 
the pullback of $(\mathcal{V},\mathcal{B})\to V$ via some morphism $\lambda_i\colon Z_i\to V$.  
Moreover, replacing the sequence $\Xi$ with an infinite subsequence and replacing $V$ with the closure
of the set of $\lambda_i(\eta_{Z_i})$, we can assume that $V$ is irreducible and smooth and 
that $\{\lambda_i(\eta_{Z_i})\}$ is dense in $V$. In addition, we can assume that $\mathcal{V}$ is irreducible and that $(\mathcal{V},\mathcal{B})$ is an lc pair. Also we can assume $\mathcal{V}\to V$ is a contraction. Now $(X_i,B_i)\to Z_i$ is 
the pullback of $(\mathcal{V},\mathcal{B})\to V$ via $\lambda_i$ with the meaning explained in the beginning of this proof. In particular, the pullback of $K_{\mathcal{V}}+\mathcal{B}$ to $X_i$ is $K_{X_i}+B_i$.

It is enough to show that perhaps after shrinking $V$, $(\mathcal{V},\mathcal{B})\to V$ is a semi-ample family, 
that is, that $K_{\mathcal{V}}+\mathcal{B}$ is semi-ample over $V$. Since the 
$\lambda_i(\eta_{Z_i})$ form a dense set in $V$ and since the general log fibres $(F_i,B_{F_i})$ of 
$(X_i,B_i)\to Z_i$ are among the general log fibres of $(\mathcal{V},\mathcal{B})\to V$, 
we see that there is a dense set of closed points $v_i\in V$ so that 
$K_{\mathcal{V}}+\mathcal{B}$ restricted to the fibres over the $v_i$ is semi-ample. Now apply 
Lemma \ref{l-s-ample-family}.
  
\end{proof}

\section{Failure of DCC of Iitaka volumes}

In this section we will study how Iitaka volumes behave in families. 
Consider the setup of \ref{setup} so that $(X,{\rm Supp }B +{\rm Supp}M)$ is log smooth over $T$ and suppose that for a dense set of closed points $T'\subset T$, $K_{X_t}+B_t+M_t$ is nef and big for 
all $t\in T'$. Letting $\eta$ be the generic point of $T$, we have that $K_{X_\eta }+B_\eta +M_\eta $ is nef. Moreover,  for $t\in T'$ the volume  
\[\vol (K_{X_t}+B_t+M_t)=\vol (K_{X_\eta }+B_\eta +M_\eta)=(K_{X_\eta }+B_\eta +M_\eta)^{\dim X_\eta }\] 
is constant (actually $\vol(K_{X_t}+B_t+M_t)$ is constant for $t\in T'$ even without assuming $K_{X_t}+B_t+M_t$ is nef, by \cite[Theorem 1.12]{Fil18} which relies on \cite[Theorem 1.8]{HMX13}). 

In much greater generality, if $d\in \N$ and $\Phi\subset \Q^{\ge 0}$ is DCC, 
then 
$$
\{\vol(K_X+B+M) \mid (X,B+M)\in \mathcal{G}_{glc}(d,\Phi)\}
$$
is a DCC set, by \cite[Theorem 1.3]{B21}.

If we remove the bigness assumption, then it is natural to consider behaviour of the Iitaka volumes.
In the case of usual pairs, that is, when $M=0$, assuming that $K_{X_t}+B_t$ is semiample for $t\in T'$, it follows by \cite[Theorem 1.2]{HMX18} that $(X,B)$ has a good minimal model over $T$. It is then easy to see that the Iitaka volumes $\Ivol(K_{X_t}+B_t)$, for $t\in T'$, belong to a DCC set, by stratifying the base $T$ appropriately.  

However, the following example shows that when $M\ne 0$, then the DCC property and the existence of 
a lower bound for the Iitaka volumes $\Ivol(K_{X_t}+B_t+M_t)$ can fail (even though we assume that each $K_{X_t}+B_t+M_t$ is semiample and generalised klt for all $t\in T'$).

\begin{exa}\label{exa-Iitaka-volume}
\emph{Let $m_i$ be a strictly increasing sequence of natural numbers. 
We will see that there exists a bounded family of surfaces $X_i$ so that 
$|-m_iK_{X_i}|$ is base point free defining a contraction $f_i\colon X_i\to Z_i$ 
where $f_i$ has a multiple fibre with multiplicity $m_i$ (i.e. the fibre is  
$m_iG_i$ where $G_i$ is reduced and irreducible) and that letting $M_i=-2K_{X_i}$, 
we have 
$$
{\rm Ivol}(K_{X_i}+M_i)=\deg (K_{Z_i}+B_{Z_i}+M_{Z_i})=\frac 1 {m_i}
$$ 
where the expression $K_{Z_i}+B_{Z_i}+M_{Z_i}$ is given by the generalised canonical 
bundle formula applied to $(X_i,M_i)\to Z_i$ as in \cite{Fil20}}. 
 
\emph{To define $X_i$ we follow \cite[Main Theorem 2.1]{Fu90}. Pick a smooth elliptic curve $E\subset \mathbb P ^2$ and nine points $p_1,\ldots , p_9\in E$ such that $\sum p_i$ has order $m_i$ and in particular $m_i\sum p_i=O$ where $O$ is the identity of $E$ (here $\sum p_i$ means sum with respect to the group structure on $E$, not the sum as a divisor). We may also fix a curve $C$ of degree $3m_i$ with simple singularities of order $m_i$ at each $p_i$ and no other singularities.
If we blow up $\mathbb P ^2$ we obtain a surface $X_i$ with a morphism $f_i:X_i\to Z_i\cong \mathbb P ^1$ defined by a pencil contained in the base point free linear series $|-m_iK_{X_i}|$. This pencil is spanned by $m_iE'$ and $C'$ the strict transforms of $m_iE$ and $C$, thus there is a unique fiber of multiplicity $m_i$ and all other fibers are reduced. We also have $-m_iK_{X_i}=f_i^*\mathcal O _{\mathbb P ^1}(1)$. 
Letting $A_i=2E'+F'$ where $F'$ is the exceptional divisor of $X_i\to \mathbb{P}^2$, then $A_i$ is very ample (this follows easily by Reider's Criterion). We have ${\rm vol}(A_i)=A_i^2=27$.
In particular, such $X_i$ form a bounded family.}

\emph{Let $M_i=-2K_{X_i}$. Consider the generalised canonical bundle formula 
$$
-K_{X_i}=K_{X_i}+M_i\sim_\Q f_i^*(K_{Z_i}+B_{Z_i}+M_{Z_i}).
$$
By construction, 
$$
\deg (K_{Z_i}+B_{Z_i}+M_{Z_i})=\frac 1 {m_i},
$$
$B_{Z_i}=(1-\frac 1 {m_i})Q_i$ where $Q_i$ is the image of $E'$, and $\deg M_{Z_i}=1+\frac 2 {m_i}$.} 

\emph{Notice that the Iitaka volume  
$$
{\rm Ivol}(K_{X_i}+M_i)=\deg (K_{Z_i}+B_{Z_i}+M_{Z_i})
$$ 
is not bounded from below. 
It is easy to see that the set of such $(X_i,M_i)$ belongs to a family as in \ref{setup} where $(X_i,M_i)$ is the fibre over some $t_i\in T$ and the $t_i$ are dense in $T$ (see \S 4 for details).}

\emph{Note that $\vol(A_i|_{F_i})=A_i\cdot F_i=9m_i$ is not bounded from above where $F_i$ is a 
general fibre of $f_i$.  
So, not surprisingly, the Cartier index of $M_{Z_i}$ is not bounded. 
See \cite[Theorem 1.7 and Lemma 7.4]{B21}, for situations for usual pairs where from boundedness of volume on general fibres we can derive boundedness of Cartier index of the moduli divisor and subsequently DCC property of the Iitaka volume.}
\end{exa}

On the other hand, one wonders if we can have a better behaviour of the Iitaka volume 
numerically:

\begin{ques}
Adopting the notation of \ref{setup}, assume that $(X_t,B_t+M_t)$ has a good minimal model for 
all $t\in T'$, in particular 
$$
\kappa (K_{X_t}+B_t+M_t)=\nu(K_{X_t}+B_t+M_t).
$$ 
Is it true that 
$$
\kappa (K_{X_\eta }+B_\eta+M^*_\eta)=\nu(K_{X_\eta}+B_\eta+M_\eta)
$$ 
for some $M^*_\eta \equiv M_\eta$ where $\eta\in T$ is the generic point?

\end{ques}

We already pointed out above that the answer to the question is negative if we take 
$M^*_\eta=M_\eta$.

\section{Failure to the existence of a relative nef fibration}

A useful fact about nef divisors on projective varieties is the existence of the so-called 
nef fibration (or nef reduction) map \cite{Nef} which roughly speaking contracts the curves with trivial 
intersection with the nef divisor, at least, generically. More precisely, let $L$ be a 
nef divisor on a normal projective variety $X$. Then by \cite[Theorem 2.1]{Nef}, there is a rational map $X\bir Y$ 
which is a morphism over some non-empty open subset $U\subset Y$, such that
\begin{enumerate}
    \item $L|_F\equiv 0$ for the 
fibres $F$ over $U$, and
\item any curve $C$ passing through a very general point of $X$ such that $L\cdot C=0$ is 
in fact contracted to a point by $X\bir Y$.
\end{enumerate}     
Note that if $\dim X=2$, then the second condition above applies for curves through a general point of $X$ (if we work over $\C$, it is in fact a well known consequence of the Hodge Index Theorem that on a surface, the set of curves $C$ such that $C\cdot L=0$ is either finite or uncountable). However, when $\dim X\geq 3$, \cite[Theorem 1]{LO16} shows that it is indeed necessary to consider curves $C$ passing through very general points of $X$.
Here we discuss an example in the relative setting also exhibiting a similar behaviour.  

Fix $E\subset \mathbb P^2$ a smooth elliptic curve and distinct points $p_1,\ldots , p_8$ on $E$ in general position, i.e. no 3 of then on a line and no 6 of them on a conic. Consider the projection
$$
\mathbb P^2\times T\to T\cong E\setminus \{p_1,\ldots , p_8\}.
$$  
Let 
$
f: X \to T
$ 
be the family of surfaces given by blowing up the constant sections 
$p_1\times T,\ldots , p_8\times T$ and the diagonal section 
$$
\Gamma =\{(p_t,t)\subset \mathbb{P}^2\times T \mid t\in T\}
$$ 
in $\mathbb P^2\times T$ where $p_t\in E$ is the point corresponding to $t\in T$ under the given isomorphism $T\cong E\setminus \{p_1,\ldots , p_8\} $. Thus for any $t\in T$ we have that
$X_t$ is the blow up of $\mathbb P^2$ along $p_1,\ldots , p_8, p_t$. 
We let $\mathcal E $ be the strict transform of $E\times T$ and $\mathcal F$ the exceptional divisor for $\nu:X\to \mathbb P^2\times T$, then 
$2\mathcal E+\mathcal F$ is $f$-very ample of degree $(2\mathcal E_t+\mathcal F_t)^2=27$.
Let 
\[T'=\{t\in T \mid p_1+\ldots +p_8+p_t \ {\rm is\ torsion\ on\ }E\},\qquad T^*=T\setminus T'.
\]
Here $p_1+\ldots +p_8+p_t$ is sum in $E$ according to the group law on $E$.

\begin{claim} $-K_{X} $ is $f$-nef. More precisely
\begin{enumerate}
\item If $t\in T'$, then $-K_{X_t}$ is semiample and $|-mK_{X_t}|$ defines a morphism to $\mathbb P^1$ with one multiple fiber corresponding to $m\mathcal E_t$ where $m$ is the order of the torsion point 
$ p_1+\ldots +p_8+p_t$ in $E$.
\item If $t\in T^*$, then $-K_{X_t}$ is nef. Moreover, if $t\in T^*$ and $p_1,...,p_8, p_t$ are in very general position on $E$ then $-K_{X_t}\cdot C>0$ for all  curves $\mathcal E_t\ne C\subset X_t$.
\end{enumerate}  
\end{claim}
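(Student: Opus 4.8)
The plan is to work on a single surface $X_t$, which is the blow-up $\nu_t\colon X_t\to\mathbb P^2$ at the nine points $p_1,\dots,p_8,p_t\in E$. Writing $H$ for the pullback of a line and $e_1,\dots,e_8,e_t$ for the exceptional curves, we have $-K_{X_t}=3H-\sum e_i-e_t$, and the strict transform $\mathcal E_t$ of $E$ satisfies $\mathcal E_t=3H-\sum e_i-e_t=-K_{X_t}$ as divisor classes (since $E$ is a cubic through all nine points). In particular $-K_{X_t}\equiv \mathcal E_t$ is an effective anticanonical divisor, and $(-K_{X_t})^2=\mathcal E_t^2=9-9=0$. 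The first thing I would record is that $-K_{X_t}$ is nef: any irreducible curve $C$ with $-K_{X_t}\cdot C<0$ would have to be a component of $\mathcal E_t$, but $\mathcal E_t$ is irreducible and $-K_{X_t}\cdot\mathcal E_t=\mathcal E_t^2=0$. Since the family $X\to T$ has $2\mathcal E+\mathcal F$ relatively very ample and $-K_X$ restricts to a nef divisor on a dense set of fibres (indeed on all fibres by the previous sentence), $-K_X$ is $f$-nef.

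**Part (1): the torsion case.**
For $t\in T'$ let $m$ be the order of $p_1+\dots+p_8+p_t$ in $E$. The key input is the classical computation on the elliptic curve: $\mathcal O_E(\mathcal E_t|_{\mathcal E_t})=\mathcal O_E(-K_{X_t}|_{\mathcal E_t})$ is, under $\mathcal E_t\cong E$, the line bundle $\mathcal O_E\big(\,-(p_1+\dots+p_8+p_t)\big)$ up to the point $O$, hence a torsion point of $\mathrm{Pic}^0(E)$ of exact order $m$. I would then run the standard argument (as in \cite[Main Theorem 2.1]{Fu90}, \cite{Fu90}): from $0\to\mathcal O_{X_t}(kK_{X_t})\to\mathcal O_{X_t}((k-1)K_{X_t})\otimes\mathcal O_{X_t}(-\mathcal E_t)\dots$ — more precisely from the exact sequences $0\to\mathcal O_{X_t}(-(j+1)K_{X_t})\to\mathcal O_{X_t}(-jK_{X_t}+\mathcal E_t)\to \mathcal O_{\mathcal E_t}(\cdots)\to 0$ together with $H^1(X_t,\mathcal O_{X_t})=0$ and $H^1$ of the negative-degree pieces — one shows $h^0(X_t,-jK_{X_t})=1$ for $0<j<m$ and $h^0(X_t,-mK_{X_t})=2$, with $|-mK_{X_t}|$ base point free. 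The resulting morphism $X_t\to\mathbb P^1$ has $m\mathcal E_t$ as a fibre (the unique member of $|-jK_{X_t}|$ for $j<m$ is $j\mathcal E_t$, forcing the multiple fibre), and all other fibres are reduced since a general anticanonical divisor meets $\mathcal E_t$ transversally; semiampleness of $-K_{X_t}$ is then immediate. This is essentially a citation to \cite{Fu90}, so I would keep it brief.

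**Part (2): the non-torsion case.**
For $t\in T^*$, nefness of $-K_{X_t}$ was already established above. The substance is the statement that, for $p_1,\dots,p_8,p_t$ in very general position, every irreducible curve $C\neq\mathcal E_t$ has $-K_{X_t}\cdot C>0$. Suppose $-K_{X_t}\cdot C=0$ with $C$ irreducible, $C\neq\mathcal E_t$. Since $-K_{X_t}\equiv\mathcal E_t$ is nef with square $0$, by the Hodge index theorem $C$ is numerically proportional to $\mathcal E_t$ in $\mathrm{NS}(X_t)\otimes\mathbb Q$; as $\mathcal E_t$ is primitive and $C$ is a distinct irreducible curve, this already shows $C^2=C\cdot\mathcal E_t=0$ and $C\equiv a\mathcal E_t$ for some positive rational $a$ — after clearing denominators we get an effective divisor $n\mathcal E_t\equiv$ (something containing $C$ but not a multiple of $\mathcal E_t$ as a divisor), i.e. $h^0(X_t,n\mathcal E_t)\geq 2$ for some $n\geq 1$, hence $h^0(X_t,-nK_{X_t})\geq 2$. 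But the same exact-sequence computation as in Part (1) shows $h^0(X_t,-nK_{X_t})=1$ for every $n\geq 1$ precisely when $\mathcal O_E(-K_{X_t}|_{\mathcal E_t})$ is \emph{non-torsion} in $\mathrm{Pic}^0(E)$, which holds exactly for $t\in T^*$. This contradiction finishes it; the "very general" hypothesis enters only to guarantee that no such $C$ exists for reasons of the nine points lying on some additional curve of low degree — equivalently, to ensure $\mathrm{NS}(X_t)$ is the full rank-$10$ lattice with the expected intersection form and $\mathcal E_t$ the unique irreducible curve in its class. Concretely I would phrase the very-general condition as: the nine points impose independent conditions on curves of every degree, so that $X_t$ contains no $(-2)$-curves and no curve numerically equivalent to a positive multiple of $-K_{X_t}$ other than $\mathcal E_t$ itself.

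**Main obstacle.**
The routine parts are the Riemann–Roch / exact-sequence bookkeeping in Part (1), which is just \cite{Fu90}. The genuine difficulty is isolating and justifying the "very general position" hypothesis in Part (2): one must ensure that the only irreducible curve in (a multiple of) the class $-K_{X_t}$ is $\mathcal E_t$, and more generally that $\mathrm{NS}(X_t)$ behaves as expected so that the Hodge-index argument cannot be evaded by some sporadic curve. I expect to handle this by a countability argument — the bad loci (nine points on a conic-through-six, on a line-through-three, or on any curve forcing an extra effective divisor in $|-nK_{X_t}|$) are each a countable union of proper closed subsets of the parameter space of $9$-tuples on $E$, so their complement (intersected with $T^*$) is still dense in $T$ — exactly as in the passage from "general" to "very general" in \cite{LO16} that the surrounding text alludes to. I would state Part (2)'s conclusion for $t$ in this countable-intersection-dense subset of $T^*$.
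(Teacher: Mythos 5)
Your proposal is correct, and much of it (nefness via $-K_{X_t}\sim\mathcal E_t$ with $\mathcal E_t^2=0$, and part (1) via the exact-sequence computation following \cite{Fu90}) coincides with the paper's treatment. For part (2), however, you take a genuinely different route. The paper's argument is a one-liner: if $C\ne\mathcal E_t$ is irreducible with $-K_{X_t}\cdot C=\mathcal E_t\cdot C=0$, then $C$ is disjoint from $\mathcal E_t$, so its image $D$ in $\mathbb P^2$ meets $E$ only at the nine blown-up points; hence $D|_E=\sum m_ip_i$ with $\sum m_i=3\deg D$, a linear equivalence on $E$ that fails for very general points by Nagata's Lemma 5 \cite{Nag59}. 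This disposes of all offending curves at once. Your argument instead splits by self-intersection via the Hodge index theorem: curves with $C^2=0$ are numerically (hence, on a rational surface, linearly) proportional to $\mathcal E_t$ and are killed by $h^0(-nK_{X_t})=1$ for all $n\ge 1$, which as you note follows from non-torsion of $\mathcal E_t|_{\mathcal E_t}$ alone, with no very-generality needed; curves with $C^2<0$ are $(-2)$-curves by adjunction and must be excluded by a very-generality/countability argument. Two remarks. First, your write-up asserts the Hodge-index proportionality before the case $C^2<0$ has been ruled out; the case split should come first. Second, the properness of each closed ``bad locus'' for the $(-2)$-classes (the points being constrained to lie on $E$, where the naive dimension count is delicate) is exactly what Nagata's lemma supplies --- an effective $(-2)$-class is again orthogonal to $\mathcal E_t$, hence disjoint from it, and pushes forward to a plane curve with $D|_E$ supported on the nine points --- so the step you single out as the genuine difficulty ultimately reduces to the paper's single citation. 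Your route buys a cleaner, torsion-theoretic statement for the $C^2=0$ case valid for every $t\in T^*$, at the cost of a separate and less elementary treatment of the negative curves.
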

\begin{proof}  (1) has been verified above, and since 
$\mathcal E_t\sim -K_{X_t}$ and $\mathcal E_t^2=0$, it follows that $-K_{X_t}$ is nef.  
So suppose that $ p_1+\ldots +p_8+p_t$  are in very general position on $E$. If $-K_{X_t}\cdot C=0$ for some curve $\mathcal E_t\ne C\subset X_t$, then the pushforward $D$ of $C$ on $\mathbb{P}^2$ is a curve  such that $D\cdot E=\sum _{i=1}^9m_ip_i$. Since $g(E)>0$, this is well known to be impossible (see eg. \cite[Lemma 5]{Nag59}).

\end{proof}

Let $M=-2K_{X}$, then 
$$
L:=K_{X}+M=-K_{X}
$$ 
is $f$-nef. 
By what we have seen above, the set of irreducible curves  $C\subset X_t$ such that $L\cdot C=0$ consists of one curve for each $t\in T^*$ such that $p_1,...,p_8, p_t$ are in very general position on $E$ and consists of the fibers of the
induced fibrations $X_t\to \mathbb P ^1$ for each $t\in T'$.
Thus a relative nef fibration for $L$ does not exist because $T'$ is dense in $T$.

\section{Generalised lc models}

In this section we study boundedness of generalised lc models, that is, projective generalised lc 
pairs $(X,B+M)$ with ample $K_X+B+M$.

\subsection{Boundedness}
Fix $d\in \mathbb{N}$, $\Phi\subset \Q^{\ge 0}$ a DCC set, and $v\in \Q^{>0}$. 
Let $\mathcal{F}_{glc}(d,\Phi,v)$ be the set of projective generalised pairs $(X,B+M)$ 
with data $X'\overset{\phi}\to X$ and $M'$ where 
\begin{itemize}
\item $(X,B+M)$ is generalised lc of dimension $d$, 
\item the coefficients of $B$ are in $\Phi$, 
\item $M'=\sum \mu_iM_i'$ where $M_i'$ are nef Cartier and $\mu_i\in \Phi$,  
\item $K_X+B+M$ is ample, and 
\item  
$
\vol(K_X+B+M)=v.
$ 
\end{itemize}

Let 
$$
\mathcal{F}_{gklt}(d,\Phi,v)\subset \mathcal{F}_{glc}(d,\Phi,v)
$$ 
consist of the subset of pairs that are generalised klt. 

For usual pairs, that is, when $M'=0$, it is well-known that the corresponding set of pairs 
in $\mathcal{F}_{glc}(d,\Phi,v)$ forms a bounded family \cite{HMX18}. It is then natural to ask whether 
$\mathcal{F}_{glc}(d,\Phi,v)$ also forms a bounded family, that is, whether 
for each $(X,B+M)$ we can find a very ample divisor $A$ so that $A^d$ and $A^{d-1}\cdot(K_X+B+M)$ 
are bounded from above depending on $d,\Phi,v$. 

Boundedness in the generalised klt case is known, that is, 
$\mathcal{F}_{gklt}(d,\Phi,v)$ is bounded, see \cite[Theorem 1.4]{B21}.
Surprisingly, we will show that $\mathcal{F}_{glc}(d,\Phi,v)$ is not bounded in general by giving a counter-example 
in dimension 3. 

In \cite{Fil18} it is claimed that boundedness holds in dimension $2$, that is, 
$\mathcal{F}_{glc}(2,\Phi,v)$ is bounded but the proof overlooks some subtle issues so boundedness for generalised lc pairs in dimesion 2
is an open question. 

\begin{ques} 
Is $\mathcal{F}_{glc}(2,\Phi,v)$ bounded?
\end{ques}

We will show that the Cartier index of $K_X+B+M$ for 
$$
(X,B+M)\in \mathcal{F}_{glc}(2,\Phi,v)
$$ 
is not always bounded even when $(X,B)$ is fixed but $M$ varies. 

Although boundedness of $\mathcal{F}_{glc}(d,\Phi,v)$ fails in general but we still hope that some subsets of interest are bounded. 

\begin{ques}
Do the generalised pairs in $\mathcal{F}_{glc}(d,\Phi,v)$ given by the canonical bundle formula form a bounded family? More precisely, consider those $$
(X,B+M)\in \mathcal{F}_{glc}(d,\Phi,v)
$$
for which there is an lc pair $(V,\Delta)$ of fixed dimension with a contraction $f\colon V\to X$ so that the coefficients of $B$ are in a fixed DCC set and that we have a canonical bundle formula 
$$
K_V+\Delta\sim_\Q f^*(K_X+B+M).
$$
Do such $(X,B+M)$ form a bounded family?
\end{ques}

A possible answer is given by Theorem \ref{t-bnd-base-adjunction}.

In a different direction we ask:
\begin{ques}\label{q-bnd-gldt-model} 
Can we choose a generalised dlt model $(Y,B_Y+M_Y)$ for each 
$$
(X,B+M)\in \mathcal{F}_{glc}(d,\Phi,v)
$$ 
so that the $(Y,B_Y+M_Y)$ form a bounded family? 
\end{ques}

In dimension two the answer to this question is affirmative by the birational boundedness of $\mathcal{F}_{glc}(2,\Phi,v)$ discussed below. In dimension 3 again the answer is affirmative but for more 
subtle reasons. 

Another manifestation of boundedness is the fact that the set of log discrepancies
$$
\{a(D,X,B+M)\le 1\mid (X,B+M)\in \mathcal{F}_{glc}(d,\Phi,v), ~~~\mbox{$D$ prime divisor over $X$}\}
$$
is finite, see \cite[Theorem 1.5]{B21}. 

On the other hand, we know that the family $\mathcal{F}_{glc}(d,\Phi,v)$ is log birationally bounded, 
that is, there exists 
a bounded family of couples $\mathcal{P}$ so that 
for each 
$$
(X,B+M)\in \mathcal{F}_{glc}(d,\Phi,v)
$$
there exist a log smooth couple $(\overline{X},\overline{\Sigma})\in \mathcal{P}$ and a birational map 
$\overline{X}\bir X$ such that 
\begin{itemize}
\item $\overline{\Sigma}$ contains the exceptional divisors of $\overline{X}\bir X$ and the support of the birational 
transform of $B$, and 

\item every $M_i'$ with $\mu_i>0$ descends to a nef divisor $\overline{M}_i$ on $\overline{X}$.
\end{itemize}
See Theorem 1.2 of \cite{B21}.
Moreover, letting $\overline{B}$ be the birational transform of $B$ plus the reduced 
exceptional divisor of $\overline{X}\bir X$ and setting 
$\overline{M}=\sum \mu_i\overline{M}_i$, we can assume that 
$$
\vol(K_{\overline{X}}+\overline{B}+\overline{M})=v.
$$
See  \cite[Proposition 5.2]{B21}.

Using this birational boundedness, we can often reformulate our questions 
in terms of a fixed family. For example, a positive answer to the 
following question gives a positive answer to Question \ref{q-bnd-gldt-model}.

\begin{ques}
Assume $(X,B)$ is a pair where $B$ is a $\Q$-boundary and $M$ is a $\Q$-divisor such that 
\begin{itemize}
\item $(X,\Supp B\cup \Supp M)$ is relatively log smooth and projective over a variety $T$,
\item for a dense set of closed points $t\in T$, $M_t$ is nef, 
$K_{X_t}+B_t+M_t$ is big and $(X_t,B_t+M_t)$ has a generalised lc model 
(i.e. $R(K_{X_t}+B_t+M_t)$ is f.g.) where 
$X_t$ is the fibre over $t$, $B_t=B|_{X_t}$ and $M_t=M|_{X_t}$. 
\end{itemize} 
Perhaps after shrinking $T$, can we run an MMP on $K_X+B+M$ over $T$ so that it terminates?
\end{ques}

The failure of boundedness of $\mathcal{F}_{glc}(d,\Phi,v)$ shows that in general 
$(X_\eta,B_\eta+M_\eta)$ does not have a generalised lc model, that is, the 
generalised lc ring $R(K_{X_\eta}+B_\eta+M_\eta)$ fails to be finitely generated.
But this does not contradict the existence of the MMP as in the question above.

\subsection{Failure of boundedness of Cartier index for $\mathcal{F}_{glc}(2,\Phi,v)$}
We present an example to show that for 
$$
(X,B+M)\in \mathcal{F}_{glc}(2,\Phi,v)
$$ 
in general the Cartier 
index of $K_X+B+M$ is not bounded. Indeed, let $E$ be an elliptic curve over $\C$, 
$X$ be the projective cone over $E$, $Y\rightarrow X$ be the blow up of the vertex, and $Y\to E$ be the associated $\mathbb{P}^1$-bundle. Then $X$ has lc singularities: it is smooth other than at the 
vertex where it has an lc but not klt singularity. 
Pick a reduced divisor $B$ so that $K_X+B$ is ample and $(X,B)$ is lc, 
and let $K_Y+B_Y$ be its pullback to $Y$. Let $M_Y$ be the pullback of a torsion Cartier divisor on $E$ 
via $Y\to E$, and let $M$ be its pushforward to $X$. 
Then 
$$
(X,B+M)\in \mathcal{F}_{glc}(2,\Phi,v)
$$ 
where $\Phi=\{1\}$ and 
$$
v:=\vol(K_X+B+M)=\vol(K_X+B)
$$ 
is independent of $M$. But the Cartier index of 
$K_X+B+M$ depends on the Cartier index of $M$ which in turn depends on the torsion index of $M_Y$. 
In this example, $(Y,B_Y+M_Y)$ is the bounded generalised dlt model (as in Question \ref{q-bnd-gldt-model}), and $K_Y+B_Y+M_Y$ is semi-ample but it is not effectively 
semi-ample, i.e. no bounded multiple is base point free. 

This example also shows that in general a family $\mathcal{F}$ of generalised lc models $(X,B+M)$ can be bounded in the sense that there is a very ample divisor $A$ on $X$ with bounded $A^{\dim X}$ and $(K_X+B+M)\cdot A^{\dim X-1}$ but such that no bounded multiple of $K_X+B+M$ is very ample.

\subsection{Failure of boundedness of $\mathcal{F}_{glc}(3,\Phi,v)$}

We construct a family of generalised pairs 
$(U_i,B_{U_i}+N_{U_i})$ in $\mathcal{F}_{glc}(3,\Phi,v)$ 
for some $\Phi,v$ so that the $U_i$ do not form a bounded family. 
In fact, in the example $B_{U_i}=0$ and $N_{U_i}=-2K_{U_i}$, so $U_i$ is 
a Fano variety. 

Recall from Example \ref{exa-Iitaka-volume} that there exist a strictly increasing 
sequence $m_i$ of natural numbers and a bounded family of surfaces $X_i$ so that 
$|-m_iK_{X_i}|$ is base point free defining a contraction $f_i\colon X_i\to Z_i$ 
where $f_i$ has a multiple fibre with multiplicity $m_i$.   
Letting $M_i=-2K_{X_i}$, the canonical bundle formula gives 
$$
-K_{X_i}=K_{X_i}+M_i\sim_{\Qq} f_i^*(K_{Z_i}+B_{Z_i}+M_{Z_i}), 
$$
and 
$$
\deg (K_{Z_i}+B_{Z_i}+M_{Z_i})=\frac 1 {m_i}.
$$
Moreover, on $X_i$ we have a very ample divisor $A_i$ with $A_i^2=27$. 

Let 
$$
S_i={\rm Proj} (\mathcal{O}_{X_i}\oplus \mathcal{O}_{X_i}(A_i)),
$$
$\pi_i\colon S_i\to X_i$ be the projection, and $H_i$ be the tautological divisor. 
Then $H_i$ is big and base point free defining a contraction $S_i\to V_i$ 
which contracts exactly $X_i$ to a point. 

Consider 
$X_i$ as a subset of $S_i$ given by the section so that $H_i|_{X_i}\sim 0$. 
First note that 
$$
K_{S_i}+X_i+H_i
$$
is $\pi_i$-trivial, so 
$$
K_{S_i}+X_i+H_i\sim \pi_i^*K_{X_i},
$$
hence 
$-(K_{S_i}+X_i+H_i)$ 
is semi-ample. Therefore,
$$
-(K_{S_i}+X_i)=H_i-(K_{S_i}+X_i+H_i)
$$
is semi-ample and big as $H_i$ is big and base point free.

Now let $N_i=-2(K_{S_i}+X_i)$. Then 
$$
K_{S_i}+X_i+N_i=-(K_{S_i}+X_i)
$$ 
is semi-ample and big defining 
a contraction $h_i\colon S_i\to U_i$ which contracts $X_i$: actually the curves contracted are exactly 
the fibres of the elliptic fibration $X_i\to Z_i$  because if $C$ is any contracted curve, then 
$H_i\cdot C=0$ which means $C\subset X_i$, and $-K_{X_i}\cdot C=0$ 
(here $X_i$ is considered as embedded in $S_i$ as above). In particular, 
$$
K_{S_i}+X_i=h_i^*K_{U_i}~~~~\mbox{and}~~~~ N_i=h_i^*N_{U_i}
$$ 
where $N_{U_i}$ is the pushforward of $N_i$. So 
$Z_i\subset U_i$ is a generalised non-klt centre of $(U_i,N_{U_i})$ as well as of $(U_i,0)$. 
Note that $-K_{U_i}$ is ample, so $U_i$ is an lc Fano variety.

Since $(S_i,X_i)$ is bounded, replacing the sequence we can assume that 
the volume of $K_{S_i}+X_i+N_i$ is fixed, say it is $v$. Then the $(U_i,N_{U_i})$ are generalised 
lc models with fixed volume. More precisely, 
$$
(U_i,N_{U_i})\in \mathcal{F}_{glc}(3,\Phi,v)
$$
where $\Phi=\{0,1\}$.
However, by construction, 
$$
f_i^*(K_{U_i}|_{Z_i})\sim_\Q (h_i^*K_{U_i})|_{X_i}=(K_{S_i}+X_i)|_{X_i}=K_{X_i},
$$ 
so 
$$
-K_{U_i}\cdot Z_i={\rm Ivol}(-K_{X_i})=\deg (K_{Z_i}+B_{Z_i}+M_{Z_i})=\frac{1}{m_i}
$$ 
with the above notation, so the Cartier index of $K_{U_i}$ is not bounded. 

We argue that the $U_i$ cannot form a bounded family.
Suppose not, then we can assume that we have a projective family $U \to T$ 
containing all such $U_i$ as fibers $U _{t_i}$ over points $t_i\in T$. 
By the cone theorem, we can choose a very ample$/T$ divisor $A$ so that  
$K_{U_{t_i}}+A|_{U_{t_i}}$ is ample for every $i$. 

Let $\nu: U'\to U$ be a resolution. 
Shrinking $T$ and passing to a subsequence, we may assume that $T$ is smooth and $(U',E')$ is log smooth over $T$ where $E'$ is the exceptional divisor of $\nu$.
Changing $A$ linearly, we can assume that $A\ge 0$ and that $(U',E'+A')$ is log smooth over $T$ where 
$A'=\nu^*A$. 
Moreover, we may assume that 
$$
\nu _i=\nu |_{ U'_{t_i}}:U'_{t_i}\to  U_{t_i}\cong U_i
$$ 
is a resolution and 
$(U'_{t_i}, E'_{t_i}+A_{t_i}')$ is log smooth where $E'_{t_i}$ is the exceptional locus of $\nu _i$ 
and $A_{t_i}'=A'|_{U_{t_i}'}$. 

Passing to a subsequence we can assume that $\vol(K_{U'_{t_i}}+E'_{t_i}+A_{t_i}')$ 
is independent of $i$ \cite[Theorem 1.8]{HMX13}. Let $A_{t_i}$ be the pushforward of $A_{{t_i}}'$. 
Then $(U_{t_i}, A_{t_i})$ is lc because 
$$
\nu_i^*(K_{U_{t_i}}+A_{t_i})\le K_{U'_{t_i}}+E'_{t_i}+A_{t_i}'
$$   
where we use the facts that $U_{t_i}$ is lc and $A_{{t_i}}'=\nu_i^*A_{{t_i}}$.
Since $K_{U_{t_i}}+A_{{t_i}}$ is ample, $(U_{t_i}, A_{t_i})$ is the lc model of 
$(U'_{t_i}, E'_{t_i}+A_{t_i}')$. In particular, 
$(U'_{t_i}, E'_{t_i}+A_{t_i}')$ has a good minimal model which is a dlt model of $(U_{t_i},A_{t_i})$.
  
By \cite[Theorem 1.2]{HMX18}, 
$$
(U', \Delta':=E'+A')
$$ 
has a good minimal model over $T$.
Let $U'\dasharrow U^c$ be the corresponding lc model, and let $\Delta^c$ 
be the pushforward of $\Delta'$. 
Then the log fibre $(U^c_{t_i},\Delta^c_{t_i})$ over $t_i$ is isomorphic to $(U_{t_i},A_{t_i})$. 
In particular, the Cartier index of $K_{U_{t_i}}+A_{t_i}$ is bounded which in turn implies 
that the Cartier index of $K_{U_{t_i}}$ is bounded. Since $U_{t_i}\simeq U_i$, the 
Cartier index of $K_{U_i}$ is bounded, a contradiction.
\bigskip

It is worth elaborating more on the above example. By construction, $S_i\to V_i$ factors through $h_i$. The induced morphism $U_i\to V_i$ 
contracts $Z_i$ only. Since $-K_{U_i}$ is ample, $U_i\to V_i$ is a flipping contraction. 
Recall that $U_i\to V_i$ is defined by the 
big and base point free divisor $H_i$. We claim that the $V_i$ belong to a bounded family. 
Using 
$$
{\pi_i}_*\mathcal{O}_{S_i}(H_i)\simeq \mathcal{O}_{X_i}\oplus \mathcal{O}_{X_i}(A_i)
$$
and its symmetric powers we can show that $\vol(H_i)$ is bounded. Moreover, 
$|H_i|$ defines a birational map: indeed, assume $v,w$ are general closed points on $S_i$; if $\pi_i(v)\neq \pi_i(w)$, then using the above isomorphism of sheaves 
we can find a section in $H^0(S_i,H_i)$ vanishing at $v$ but not at $w$, and vice versa; if $\pi(v)=\pi(w)$, then using the fact that $|H_i|$ is free and that $\deg H_i|_{F}=1$, where $F$ is the fibre of $\pi_i$ containing $v,w$, we can again find a section in $H^0(S_i,H_i)$ vanishing at $v$ but not at $w$, and vice versa. 
Therefore, the $V_i$ belong to a bounded family.  

Let $H_{V_i}$ be the pushforward of $H_i$. 
We can find $0\le L_{V_i}\sim 7H_{V_i}$, so that $(S_i,X_i+L_i)$ is lc and log smooth where 
$L_i$ is the pullback of $L_{V_i}$. Now let $U_i^+\to V_i$ be the flip of $U_i\to V_i$. 
Then $(U_i^+,L_{U_i^+})$ is the ample model of $(S_i,X_i+L_i)$. Since the latter 
form a bounded family, the $(U_i^+,L_{U_i^+})$ also form a bounded family. 
Note that since $U_i$ is smooth outside $Z_i$, $U_i^+$ is klt.  
We have then an example of an unbounded family $U_i$ which becomes klt and bounded after a flip.


\section{Boundedness of generalised pairs on bases of fibrations}

In this section we prove our main result on boundedness of generalised pairs that are induced by adjunction for fibrations as in Theorem \ref{t-bnd-base-adjunction}. We will first need to make some preparations.

\subsection{Preparations}
We start with a reduction from the lc case to the dlt case. 

\begin{lem}\label{l-bnd-base-adjunction-lc-to-dlt} 
Let $d\in \mathbb N$, $\Phi\subset \Q^{\ge 0}$ be a DCC set, and $u\in \Q^{>0}$.  
Then there exists a finite set $\Pi\subset \Q^{>0}$ satisfying the following.
Consider the set of projective pairs $(X,B)$ and divisors $A\ge 0$ such that 
\begin{itemize}
\item $(X,B)$ is lc of dimension $d$,
\item $K_X+B$ is semi-ample defining a contraction $f\colon X\to Z$,
\item over the generic point $\eta_Z$ of $Z$: $A$ is ample and contains no non-klt centre of $(X,B)$,
\item the coefficients of $B$ and the horizontal coefficients of $A$ are in $\Phi$, 
\item $\vol(A|_F)=u$ for general fibres $F$ of $f$. 
\end{itemize}
Then there exist a crepant model $(X',B')$ of $(X,B)$ and an integral divisor $A'\ge 0$ 
on $X'$ such that 
\begin{itemize}
    \item $\vol(A'|_{F'})\in \Pi$ for the general fibres $F'$ of $X'\to Z$, 
\item the coefficients of $B'$ belong to $\Phi\cup \{1\}$, 
and 
\item over $\eta_Z$ we have: $(X',B')$ is dlt, $A'$ is ample and contains no non-klt centre of $(X',B')$.
\end{itemize}
\end{lem}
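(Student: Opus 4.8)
The plan is to pass to a $\Q$-factorial dlt model and then absorb the newly-created exceptional divisors, as well as a bounded perturbation of $A$, into the divisors $B'$ and $A'$. First I would take a $\Q$-factorial dlt model $\phi\colon (X',B')\to (X,B)$, so that $K_{X'}+B'=\phi^*(K_X+B)$ is crepant, $(X',B')$ is $\Q$-factorial dlt, the coefficients of $B'$ lie in $\Phi\cup\{1\}$ (the value $1$ coming from the reduced exceptional divisors), and $K_{X'}+B'$ is still semi-ample defining $X'\to Z$ factoring through $X'\to X\to Z$. Over the generic point $\eta_Z$ the morphism $\phi$ is an isomorphism on the complement of the non-klt locus in a neighbourhood of the generic fibre, or more carefully: since $A$ contains no non-klt centre of $(X,B)$ over $\eta_Z$, the birational transform $A''$ of $A$ on $X'$ together with a suitable combination of exceptional divisors of $\phi$ can be arranged to be ample over $\eta_Z$ and to avoid the non-klt centres of $(X',B')$; this uses that ampleness is preserved after adding a small multiple of an effective $\phi$-exceptional divisor with the right support, by negativity of contraction.

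The key point is controlling $\vol(A'|_{F'})$ for the general fibre $F'$ of $X'\to Z$. Over $\eta_Z$, $\phi$ restricts to a birational morphism $F'\to F$ which is an isomorphism outside a closed subset; the birational transform $A''|_{F'}$ has the same volume $u$ as $A|_F$. But $A''$ need not be integral once we have rescaled or perturbed, so I would instead do the following: choose a sufficiently divisible natural number $m$ (depending only on $d,\Phi,u$ via boundedness of the general fibres, which holds by \cite[Corollary 1.8]{B20} since $\vol(A|_F)=u$ and the coefficients are in $\Phi$) and set $A'$ to be an integral divisor in $|mA''+\sum a_jE_j|$ over $\eta_Z$, where the $E_j$ are $\phi$-exceptional and the $a_j$ are chosen so that $A'$ is ample over $\eta_Z$, contains no non-klt centre of $(X',B')$, and $(X',B')$ remains dlt after noting that $A'$ is disjoint from the relevant strata. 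Then $\vol(A'|_{F'})=(mA''|_{F'}+\textrm{exc})^{\dim F'}$ which, because the general fibre lies in a bounded family and $A''|_{F'}$ has bounded volume $u$, takes only finitely many values as $(X,B),A$ range over the given set; call this finite set $\Pi$. One must check $\Pi\subset\Q^{>0}$ and is finite: finiteness follows from the boundedness of the general fibres $F$ and the fact that $A''|_{F'}$ is determined up to bounded data, so its intersection numbers against the bounded polarization, and hence $\vol(A'|_{F'})$, range in a finite set.

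The main obstacle will be making the perturbation $A\rightsquigarrow A'$ simultaneously (i) integral, (ii) ample over $\eta_Z$, (iii) non-klt-centre-avoiding, and (iv) of controlled volume on the general fibre, all while keeping $(X',B')$ dlt over $\eta_Z$ — the tension is that enlarging the coefficients to clear denominators or adding exceptional divisors could drag in a non-klt centre or break dltness. I would resolve this by working over $\eta_Z$ throughout (we are allowed to impose the conclusions only there), using that over $\eta_Z$ the non-klt locus of $(X',B')$ is a proper closed subset disjoint from a general member of a sufficiently positive integral linear system containing $A''$, and invoking boundedness of the general log fibres to get a uniform $m$. Once $m$ is uniform, the finitely-many-values claim for $\vol(A'|_{F'})$ is automatic, and we define $\Pi$ to be exactly that finite set, completing the proof.
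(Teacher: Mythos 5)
Your reduction to a $\Q$-factorial dlt model and the general shape of the perturbation $A\rightsquigarrow A'$ are reasonable, but the proposal has a genuine gap at exactly the point where the lemma is hard: the claim that $\vol(A'|_{F'})$ takes only finitely many values ``because the general fibre lies in a bounded family and $A''|_{F'}$ has bounded volume $u$'' is not justified. Boundedness of the general log fibres $(F,B_F+sA_F)$ (which does hold, by \cite[Theorem 6.4]{B20} and \cite[Theorem 1.1]{HMX18}) controls $F$ and $A_F$, but it does not control the crepant dlt model $X'$ restricted to $F$, nor the multiple $m$ and the exceptional coefficients $a_j$ you need so that an integral member of $|mA''+\sum a_jE_j|$ is ample over $\eta_Z$ and avoids the non-klt centres of $(X',B')$. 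Crepant dlt models are far from unique -- one can extract arbitrarily long towers of log discrepancy zero divisors -- and for a badly chosen model the required $m$, hence $\vol(A'|_{F'})\approx m^{\dim F}u$, is unbounded. Moreover the dlt condition is imposed over the generic point $\eta_Z$, i.e.\ on a pair over the function field $k(Z)$, and there is no direct way to transfer boundedness of the closed general fibres to a uniform choice of model and polarisation over $\eta_Z$. Your sentence ``Once $m$ is uniform, the finitely-many-values claim is automatic'' is true but begs the question: establishing that a uniform $m$ (equivalently, one uniformly bounded choice of $(X',B'),A'$) exists is the entire content of the lemma.

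The paper's proof handles this by a minimality-plus-contradiction argument: among all admissible $(X',B'),A'$ it picks one with minimal $\vol(A'|_{F'})$, assumes a sequence where this minimum is unbounded, and then uses Lemma \ref{l-universal-family} (Koll\'ar's moduli of strongly embedded locally stable families) to realise all the fibrations in the sequence, generically over their bases, as pullbacks of a single semi-ample family $(\mathcal V,\mathcal B)\to V$. A dlt model of $(\mathcal V,\mathcal B)$ together with one general very ample divisor $\mathcal A''$ chosen once and for all on the universal family then pulls back to give, over an open subset of each $Z$, a dlt model with an ample integral divisor of uniformly bounded fibrewise volume; a further MMP argument (the divisors $C_W$, $E_1$, $E_2$ and the negativity lemma) globalises this partial model to a crepant model of $(X,B)$ over all of $X$ without changing the data over $\eta_Z$, producing an admissible $(X'',B''),A''$ with $\vol(A''|_{F''})$ in a fixed finite set and contradicting minimality. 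To repair your proposal you would need to supply an argument of this kind (or some other mechanism producing a single uniform choice of model and polarisation across the whole family); the fibrewise boundedness you invoke is an input to, not a substitute for, that step.
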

\begin{proof}
We can fix the relative dimension $r=\dim X-\dim Z$. If $r=0$, then the statement holds by simply taking $(X',B')$ to be a $\Q$-factorial dlt model of $(X,B)$ and $A'$ be any effective integral divisor. So assume $r>0$. 
Let $F$ be a general fibre of $f$ and $B_F=B|_F$ and $A_F=A|_F$.
Let $\delta$ be the minimum positive number in $\Phi$. Then the coefficients of $A_F$ are $\geq \delta$.
By \cite[Theorem 6.4]{B20}, there exists a positive rational number $s>0$ depending only on $d,\Phi,u$ such that $(F,B_F+sA_F)$ is log canonical and hence a stable pair with volume
$\vol (K_F+B_F+sA_F)=s^ru$.
By \cite[Theorem 1.1]{HMX18}, it follows that the pairs $(F,B_F+sA_F)$ are bounded and the horizontal coefficients of $B$ and $A$ are in a finite subset $\Phi _0\subset \Phi$. 
It follows that there is a fixed integer $m\in \mathbb{N}$ so that $H:=m(K_X+B+sA)$ is very ample over $\eta_Z$ and the volume $v:=\vol(H|_F)$ is fixed. 

Clearly there exist a crepant model $(X',B')$ of $(X,B)$ and an integral divisor $A'\ge 0$ 
on $X'$ such that $A'$ is very ample over $\eta_Z$, the coefficients of $B'$ belong to $\Phi\cup \{1\}$, and over $\eta_Z$ we have: $(X',B')$ is dlt, $A'$ contains no non-klt centre of $(X',B')$. 
Choose such $(X',B'),A'$ with minimal $\vol(A'|_{F'})$ where $F'$ is a general fibre of $X'\to Z$. 

It is enough to show that $\vol(A'|_{F'})$ is bounded from above. Assume not, and  choose a 
sequence $\Xi=\{(X,B),A\to Z\}$ so that for the corresponding sequence $(X',B'),A'$ the volumes $\vol(A'|_{F'})$ are not bounded from above. Perhaps after replacing the sequence with a subsequence and applying Lemma \ref{l-universal-family}, we can assume that there exists a semi-ample family 
$(\mathcal{V},\mathcal{B})\to V$ such that for each family $(X,B),A\to Z$ in $\Xi$, over the generic point $\eta_Z$ the family $(X,B)\to Z$ is the pullback of 
$(\mathcal{V},\mathcal{B})\to V$ via some rational map $Z\bir V$, and the images of the $\eta_Z$ form a dense set in $V$. 
 
By definition of semi-ample families, $(\mathcal{V},\mathcal{B})$ is lc.  
Pick a dlt model $(\mathcal{V}'',\mathcal{B}'')$ of $(\mathcal{V},\mathcal{B})$ together with a 
general very ample$/V$ Cartier divisor $\mathcal{A}''\ge 0$ not containing any non-klt centre of 
$(\mathcal{V}'',\mathcal{B}'')$. Then we can assume that pulling back the family $(\mathcal{V}'',\mathcal{B}''), \mathcal{A}''\to V$ over some appropriate open subset $S\subset Z$ via $Z\bir V$ we get a semi-ample family $(U'',B_{U''})\to S$ and a very ample$/S$ divisor $A_{U''}\ge 0$ such that $(U'',B_{U''})$ is a dlt model of $(X,B)$ over $S$ and $A_{U''}$ does not contain any non-klt centre of $(U'',B_{U''})$.

Take a compactification $X'''$ of $U''$ over $X$ and take a log resolution $W\to X'''$ so that it is an isomorphism over the generic point of the non-klt centres of $(U'',B_{U''})$ (this is possible by the dlt condition). Let $U\subset X$ be the image of $U''$. Let $E_1$ be the sum of the exceptional$/X$ prime divisors on $W$ whose generic point maps into $U$, and let $E_2$ be the sum of the other exceptional$/X$ prime divisors on $W$. 
Let $B^\sim$ be the birational transform of $B$ and write $B^\sim=B^\sim_1+B^\sim_2$ where the generic point of each component of $B^\sim_1$ maps into $U$ but the generic point of each component of $B^\sim_2$ maps outside $U$. Let 
$$
C_W=E_1+B^\sim_1+eE_2+eB^\sim_2
$$ 
where $e<1$ is a rational number sufficiently close to $1$. Let $A_{U''}^\sim$ be the birational transform of $A_{U''}$ on $W$. 

Pick a small rational number $t>0$. Then, perhaps after removing some vertical$/Z$ components of $A_{U''}^\sim$ and after some blowups of $W$, we can assume that the pair $(W,C_W+tA_{U''}^\sim)$ is lc and that the generic point of each of its non-klt centres  maps into $U$. Over $U$, $(U'',B_{U''}+tA_{U''})$ is an lc model of $(W,C_W+tA_{U''}^\sim)$, hence $(W,C_W+tA_{U''}^\sim)$ has a good minimal model over $U$ \cite{B12}. Thus $(W,C_W+tA_{U''}^\sim)$ has a good minimal model $(X'',C''+tA'')$ over $X$, by \cite{HX13}, which we can assume to be $\Q$-factorial. Over $U$, $(X'',C''+tA'')$ is a dlt model of $(U'',B_{U''}+tA_{U''})$, so the map $X''\bir X'''$ is a morphism over $U$. Since $t$ is small, 
we can assume that the exceptional divisors of $W\bir X''$ are independent of $t$. In particular, over $X$, $K_{X''}+C''$ is a limit of movable$/X$ divisors.  Then by the general negativity lemma \cite[Lemma 3.3]{B12}, denoting $X''\to X$ by $\phi$ we have $\phi^*(K_X+B)-(K_{X''}+C'')\ge 0$. Thus every exceptional prime divisor of $\phi$ has log discrepancy zero with respect to $(X,B)$ as the coefficient of such divisors in $C''$ is $\ge e$ and $e$ is sufficiently close to $1$. Let $K_{X''}+B''=\phi^*(K_X+B)$. Then we see that 
$(X'',B'')$ is a crepant model of $(X,B)$, the coefficients of $B''$ are in $\Phi\cup \{1\}$, and over $\eta_Z$, $A''\ge 0$ does not contain any non-klt centre of $(X'',B'')$. 

Now replace $X''$ with the lc model of $(X'',C''+tA'')$ over $X$. Then over $U$, $(X'',C''+tA'')$ is just $(U'',B_{U''}+tA_{U''})$. In particular,  over $\eta_Z$, $(X'',B'')$ is dlt and $A''\ge 0$ is very ample. 
By construction, $\vol(A''|_{F''})$ belongs to a fixed finite set $\Pi$ where $F''$ is a general fibre of $X''\to Z$. This contradicts the assumption that $\vol(A'|_{F'})$ is not bounded from above where $F'$ is a general fibre of $X'\to Z$ and the fact that $\vol(A''|_{F''})\ge \vol(A'|_{F'})$ by our choice of $(X',B'),A'$.

\end{proof}

\subsection{Adjunction for generic dlt pairs}

Next we treat adjunction on non-klt centres for generic dlt pairs.

\begin{lem}\label{l-generic-dlt-adjunction}
Let $d\in \mathbb N$ and let $\Phi\subset \Q^{\ge 0}$ be a DCC set. Then there is a 
DCC set $\Psi$ satisfying the following.
Let $(X,B)$ be an lc pair of dimension $d$ where the coefficients of $B$ are in $\Phi$. 
Assume $V$ is a non-klt centre of $(X,B)$ and that $(X,B)$ is dlt near the 
generic point $\eta_V$. Let $V^\nu$ be the normalisation of $V$. 
Then we have an adjunction formula $K_{V^\nu}+B_{V^\nu}=(K_X+B)|_{V^\nu}$ where the 
coefficients of $B_{V^\nu}$ belong to $\Psi$. 
\end{lem}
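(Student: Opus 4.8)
The plan is to reduce to a local computation near $\eta_V$ and then apply the known DCC-type results for adjunction along divisorial and higher-codimension non-klt centres. First I would localise at the generic point $\eta_V$: since $(X,B)$ is dlt near $\eta_V$, after shrinking $X$ around $\eta_V$ we may assume $(X,B)$ is $\Q$-factorial dlt, that $V$ is the unique non-klt centre passing through $\eta_V$, and that $V$ itself is smooth (being a stratum of a dlt pair). In this local setting $V$ is normal, so $V^\nu = V$ over the open locus we care about, and the coefficients we need to control are unchanged by the shrinking since adjunction coefficients are computed at codimension-one points of $V^\nu$.

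Next I would build $B_{V^\nu}$ by repeated divisorial adjunction. Because $(X,B)$ is dlt near $\eta_V$, there is a chain of irreducible components $V = V_c \subset V_{c-1} \subset \dots \subset V_0 = X$ with each $V_{j+1}$ a divisorial lc (indeed dlt) centre in $V_j$ (for instance, take the components of $\lfloor B \rfloor$ through $\eta_V$ and intersect successively). Applying classical divisorial adjunction $K_{V_{j+1}} + B_{V_{j+1}} = (K_{V_j} + B_{V_j})|_{V_{j+1}}$ one step at a time, the coefficients of $B_{V_{j+1}}$ at a prime divisor $D$ are given by the standard formula involving the coefficient of $B_{V_j}$ along the divisor whose restriction contains $D$, plus a "different" contribution $\frac{m-1}{m}$ coming from the codimension-one singularities of the ambient pair along $D$ (or more generally $1 - \frac{1}{m} + \sum \frac{k_i b_i}{m}$). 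The crucial inputs are: (i) adjunction preserves the lc property so all coefficients stay in $[0,1]$; and (ii) by the ACC for log canonical thresholds / the structure of adjunction coefficients (as in \cite{HMX14} and the precise statements used for b-divisors and generalised adjunction in \cite{BZ16}), if the coefficients of $B$ lie in a DCC set $\Phi$, then after one step of adjunction the new coefficients lie in another DCC set depending only on $\Phi$ and $\dim$. Since the chain has length at most $d$, iterating $d$ times produces a DCC set $\Psi = \Psi(d,\Phi)$ containing all coefficients of $B_{V^\nu}$.

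The technical heart — and the step I expect to be the main obstacle — is justifying step (ii) cleanly in the dlt (rather than plt or snc) setting and making sure the "generic" hypothesis is genuinely enough: one must check that near $\eta_V$ the higher-codimension centre $V$ can indeed be realised as the terminal stratum of such a divisorial chain, that the intermediate varieties $V_j$ are normal (they are, being dlt strata, by the connectedness/normality properties of dlt centres), and that the different terms $\frac{m-1}{m}$ that appear have $m$ bounded below in terms of $\Phi$ so they do not accumulate to $1$ from below in the wrong direction. This last point follows because $m$ is a product of Cartier indices / denominators constrained by the ACC for lc pairs of bounded dimension with coefficients in $\Phi$; since $\Phi$ is DCC its denominators needn't be bounded, but the relevant combination $1 - \frac{1}{m} + \sum \frac{k_i b_i}{m}$ still forms a DCC set by the same argument as in the proof of \cite[Theorem 3.2 or Proposition 4.9]{BZ16}. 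Once this is in place, the statement follows by collecting $\Psi$ over the $\le d$ steps and noting the whole construction is compatible with normalisation since $V$ was already normal after shrinking.
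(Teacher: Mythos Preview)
Your localisation step contains a genuine gap. You shrink $X$ to an open neighbourhood of $\eta_V$ so that $(X,B)$ becomes dlt, and then assert that ``the coefficients we need to control are unchanged by the shrinking since adjunction coefficients are computed at codimension-one points of $V^\nu$.'' But this is precisely why shrinking is dangerous: a codimension-one point $\eta_D$ of $V^\nu$ maps to a point of $X$ lying on the closure of $V$ in codimension at least one, and there is no reason for that point to lie in your chosen neighbourhood of $\eta_V$. The hypothesis only guarantees dlt near $\eta_V$, not near the image of $\eta_D$; for prime divisors $D\subset V^\nu$ whose image in $X$ lies in the non-dlt locus, your iterated divisorial adjunction is simply unavailable, and you have no control over $\mu_D B_{V^\nu}$.

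The paper fixes exactly this by working globally: instead of shrinking $X$, it takes a log resolution $\phi\colon W\to X$ which is an isomorphism over $\eta_V$ (possible by the dlt hypothesis there), sets $B_W$ equal to the strict transform of $B$ plus the reduced exceptional divisor, and runs an MMP on $K_W+B_W$ over $X$ to obtain a $\Q$-factorial dlt model $(X',B')$ of $(X,B)$ which is still an isomorphism over $\eta_V$. Now $V$ has a birational transform $V'$ on $X'$, and because $(X',B')$ is globally dlt, $V'$ is normal and the well-known iterated adjunction (essentially the chain-of-strata computation you describe) gives $K_{V'}+B_{V'}=(K_{X'}+B')|_{V'}$ with coefficients in a fixed DCC set $\Psi$. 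Pushing forward along the birational morphism $V'\to V^\nu$ then defines $B_{V^\nu}$ with the same coefficient control. In short, your iterated-different argument is fine, but it must be run on a global dlt model, not on a shrunken neighbourhood of $\eta_V$.
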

\begin{proof}
Since $(X,B)$ is dlt near $\eta_V$, there is a log resolution $\phi\colon W\to X$ which is an isomorphism over $\eta_V$. Let $B_W$ be the sum of the reduced exceptional divisor of $\phi$ and the birational transform of $B$. Running an MMP on $K_W+B_W$ over $X$ with scaling of an ample divisor ends with a model $X'$ such that $(X',B')$ is a dlt model of $(X,B)$, where $B'$ is the pushforward of $B_W$, and so that $X'\to X$ is an isomorphism over $\eta_V$. 
In particular, $V$ has a birational transform $V'$ on $X'$. It is then well-known that 
we have an adjunction formula $K_{V'}+B_{V'}=(K_{X'}+B')|_{V'}$ where the 
coefficients of $B_{V'}$ belong to a fixed DCC set $\Psi$. Moreover, $V'$ is normal, 
so $V'\to V$ factors through $V^\nu$. Now let $B_{V^\nu}$ be the pushforward of $B_{V'}$. 

\end{proof}

\subsection{Uniform Galois covers}
We will now discuss Galois covers in families. 

\begin{lem}\label{l-finite-cover}
Let $d,r \in \mathbb{N}$. Consider commutative diagrams 
$$
\xymatrix{
& {S}'\ar[ld]^\alpha\ar[rd]^\beta &\\
{S}\ar[rd]^\pi && {Z}' \ar@{-->}[ld]^\phi\\
& Z&}
$$ 
of normal projective varieties of dimension $d$ where $\alpha$ and $\phi$ are birational and $\pi$ is finite. Assume $H_{S},H_{Z'}$ are very ample divisors on $S,Z'$ with $(\alpha^*H_S+\beta^*H_{Z'})^d\le r$. Then there is a very ample 
divisor $H_Z$ on $Z$ such that $(\alpha^*H_S+\beta^*H_{Z'}+\alpha^*\pi^*H_Z)^d$ 
is bounded from above, in particular, such $Z$ belong to a bounded family.
\end{lem}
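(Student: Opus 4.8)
The plan is to argue by contradiction, so that we may pass to subsequences at will. Suppose the conclusion fails; then there is a sequence of such diagrams for which, no matter how the very ample $H_Z$ on $Z$ is chosen, $(\alpha^*H_S+\beta^*H_{Z'}+\alpha^*\pi^*H_Z)^d$ is not bounded above in terms of $d$ and $r$. First I would bound $\deg\pi$. Since $\alpha,\phi$ are birational and $\pi\circ\alpha=\phi\circ\beta$, the morphism $\beta$ is surjective and generically finite of degree $e:=\deg\pi$. As $\alpha^*H_S$ and $\beta^*H_{Z'}$ are nef, every mixed term in the expansion of $(\alpha^*H_S+\beta^*H_{Z'})^d$ is nonnegative, so $H_S^d=(\alpha^*H_S)^d\le r$ and, by the projection formula, $e\cdot H_{Z'}^d=(\beta^*H_{Z'})^d\le r$; since $H_{Z'}^d\ge 1$ this gives $e\le r$ and $H_{Z'}^d\le r$. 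Hence $(S,H_S)$ and $(Z',H_{Z'})$ lie in bounded families, and after passing to a subsequence I may assume all the $S$ form one family $\mathcal{S}\to B$ over a finite type base with relatively very ample $\mathcal{H}_S$ restricting to $H_S$, and similarly for the $Z'$.

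The main step is to prove that $Z$ itself lies in a bounded family; here the two boundedness inputs must be combined, since $S$ bounded together with $\deg\pi\le r$ does not suffice (a bounded variety has finite covers of bounded degree that are arbitrarily unbounded) and the birational map $\phi$ can be arbitrarily complicated. Let $\overline{S'}$ be the image of $(\alpha,\beta)\colon S'\to S\times Z'$; since $\alpha$ is birational, $(\alpha,\beta)$ is generically injective, hence birational onto $\overline{S'}$, and $\alpha^*H_S+\beta^*H_{Z'}$ is the pullback of $(p_S^*H_S+p_{Z'}^*H_{Z'})|_{\overline{S'}}$, so $\overline{S'}$ has degree $(\alpha^*H_S+\beta^*H_{Z'})^d\le r$ there; as $S\times Z'$ is bounded, $\overline{S'}$ lies in a bounded family. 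Writing $\mathcal{H}_{\overline{S'}}=(p_S^*H_S+p_{Z'}^*H_{Z'})|_{\overline{S'}}=\overline\alpha^*H_S+\overline\beta^*H_{Z'}$ for the projections $\overline\alpha,\overline\beta$ of $\overline{S'}$ to $S$ and $Z'$, the Khovanskii--Teissier inequalities, together with $\overline\alpha^*H_S\cdot\mathcal{H}_{\overline{S'}}^{d-1}\le r$, $\overline\beta^*H_{Z'}\cdot\mathcal{H}_{\overline{S'}}^{d-1}\le r$ and $\mathcal{H}_{\overline{S'}}^d\le r$, bound the degrees of the graphs of $\overline\alpha$ and $\overline\beta$, so these belong to bounded families of morphisms. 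Now $\pi$ is finite and $Z$ is normal, hence seminormal, so $Z$ is the geometric quotient of $S$ by the finite equivalence relation $R:=S\times_Z S\subset S\times S$; chasing the rational maps in the diagram identifies $R$, up to closure, with the image under $\overline\alpha\times\overline\alpha$ of the subscheme $\overline{S'}\times_{Z'}\overline{S'}=(\overline\beta\times\overline\beta)^{-1}(\Delta_{Z'})$, which is bounded because $\overline\beta$ is and $\Delta_{Z'}$ is. Hence $R$ is a bounded subscheme of $S\times S$, and after a further subsequence and stratification the pairs $(S,R)$ form a single family $(\mathcal{S},\mathcal{R})\to B$ of varieties with a finite equivalence relation. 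By Kollár's theory of quotients by finite equivalence relations in families (see \cite{K21}), after shrinking $B$ the quotient $\mathcal{Z}:=\mathcal{S}/\mathcal{R}\to B$ exists as a family of normal projective varieties, carries a relatively very ample divisor $\mathcal{H}_Z$, and $\pi\colon\mathcal{S}\to\mathcal{Z}$ is a morphism over $B$ inducing the given $\pi$ on each fibre; in particular $Z=\mathcal{Z}|_Z$ lies in a bounded family. I expect this to be the main obstacle: reducing boundedness of $Z$ to a statement about quotients by finite equivalence relations in families, and invoking that machinery, is where the real work lies.

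Finally, set $H_Z:=\mathcal{H}_Z|_Z$ and $M:=\beta^*H_{Z'}+\alpha^*\pi^*H_Z$, a nef divisor on $S'$. Since $\pi\colon\mathcal{S}\to\mathcal{Z}$ is a morphism over the finite type base $B$, the intersection number $\pi^*H_Z\cdot H_S^{d-1}$ on $S$ is locally constant on $B$, hence bounded; this equals $\alpha^*\pi^*H_Z\cdot(\alpha^*H_S)^{d-1}$ because $\alpha$ is birational, and $\beta^*H_{Z'}\cdot(\alpha^*H_S)^{d-1}\le(\alpha^*H_S+\beta^*H_{Z'})^d\le r$, so $M\cdot(\alpha^*H_S)^{d-1}$ is bounded. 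Using $(\alpha^*H_S)^d=H_S^d\ge 1$ and the log-concavity of the mixed multiplicities $(\alpha^*H_S)^{d-i}M^i$, we get $(\alpha^*H_S)^{d-i}M^i\le\big(M\cdot(\alpha^*H_S)^{d-1}\big)^i$ for every $i$, hence $(\alpha^*H_S+\beta^*H_{Z'}+\alpha^*\pi^*H_Z)^d=\sum_i\binom{d}{i}(\alpha^*H_S)^{d-i}M^i$ is bounded in terms of $d$ and $r$, contradicting the choice of the sequence and proving the lemma. The final clause is then immediate, since $(\alpha^*\pi^*H_Z)^d=e\cdot H_Z^d$ is bounded and $e\le r$.
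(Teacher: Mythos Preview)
Your approach diverges from the paper's after the common first step of bounding $S'$ (equivalently, the graph of $S\dashrightarrow Z'$). The paper places all the $S'\to Z'$ in a universal family $\mathcal S'\to\mathcal Z'\to T$, then chooses a finite cover $\mathcal V'\to\mathcal S'$ making $\mathcal V'\to\mathcal Z'$ generically \emph{Galois} with group $G$. Restricting to a fibre and taking integral closures produces a $G$-variety $V\to Z$ with $V/G=Z$, and the ample divisor on $Z$ is built by hand: $L=\sum_{g\in G}g^*(\gamma^*H_S)$ is $G$-invariant, hence $L=\gamma^*\pi^*N$ for some Cartier $N$ on $Z$, and one checks directly that $|N|$ is free and birational. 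The volume bound comes from showing $l\beta^*H_{Z'}-\alpha^*H_S$ is big for a fixed $l$ (a consequence of boundedness of $(S',\alpha^*H_S+\beta^*H_{Z'})$), so that $|G|\cdot l\rho^*H_{Z'}$ dominates the averaged divisor on the Galois cover.

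Your route via finite equivalence relations --- recovering $Z$ as $S/R$ with $R=S\times_Z S$, bounding $R$ through $\overline{S'}\times_{Z'}\overline{S'}$, and invoking Koll\'ar's quotient machinery --- is plausible, and the identification of $R$ up to closure is correct. But the step you flag as the main obstacle really is one: Koll\'ar gives the geometric quotient $\mathcal S/\mathcal R$ as a scheme in characteristic zero, yet you still need projectivity of $\mathcal Z\to B$ and compatibility of the quotient with restriction to fibres (so that $\mathcal Z_b\cong Z$; note that a priori $Z$ is only the normalisation of $S/R$). Projectivity typically requires an $\mathcal R$-invariant relatively ample line bundle on $\mathcal S$, and producing one is essentially the paper's averaging trick after passing to a Galois closure. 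So your black box hides precisely the construction the paper carries out explicitly. If those points are granted, your final Khovanskii--Teissier argument is clean; the paper's endgame is similar in spirit but phrased on the Galois cover.
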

\begin{proof}
Consider the induced rational map $S\bir Z'$. We can replace $S'$ with the normalisation of the graph $\Gamma$ of $S\bir Z'$. Then $\alpha^*H_S+\beta^*H_{Z'}$ is ample, base point free and defines a birational morphism $\psi\colon S' \to \Gamma\subset \mathbb P^n$ such that 
$\psi^*R|_\Gamma=\alpha^*H_S+\beta^*H_{Z'}$, where $R$ is a hyperplane, and $S' \to \Gamma$ is the normalization.
Since $(R|_\Gamma)^d=(\alpha^*H_S+\beta^*H_{Z'})^d$ is bounded, it follows that $\Gamma$ is bounded and hence $S'$ belongs to a bounded family. Therefore, there exist projective morphisms    
$\mathcal{S}'\to \mathcal{Z}' \to T$
of normal varieties of finite type where $\mathcal{S}'\to \mathcal{Z}'$ is generically finite, 
so that each  $S' \to Z'$ as above is isomorphic to $\mathcal{S}_t'\to \mathcal{Z}_t'$ for some closed point $t\in T$. We may further assume that the set of such points $t\in T$ is dense in each component of $T$.

There is a finite morphism $\mathcal{V}'\to \mathcal{S}'$ from a normal variety so that the induced morphism $\mathcal{V}'\to \mathcal{Z}'$ is Galois over the generic point of $\mathcal{Z}'$, i.e. the corresponding extension of function fields is Galois, say with Galois group $G$. 
Then over some non-empty open subset $\mathcal{U}'$ of $\mathcal{Z}'$ we have: $\mathcal{V}'\to \mathcal{Z}'$ is a Galois finite \'etale cover and $G$ acts on $\mathcal{V}'$ with quotient being $\mathcal{Z}'$. 

Pick $S\leftarrow S' \to Z'$ as above where we assume $S'$ is the normalised graph of $S\bir Z'$ and that $S'\to Z'$ is isomorphic to $\mathcal{S}_t'\to \mathcal{Z}_t'$ for some closed point $t\in T$.
It is enough to consider the case when  $\eta_{\mathcal{Z}_t'}\in \mathcal{U}'$ and when $\mathcal{V}_t'$ is a normal variety, by stratifying $T$ and applying Noetherian induction. 
Then over $\mathcal{Z}_t'\cap \mathcal{U}'$ we have: 
$$
V':=\mathcal{V}_t'\to \mathcal{Z}_t'=Z'
$$ 
is a Galois finite \'etale cover with Galois group $G$, and $G$ acts on $\mathcal{V}_t'$ with quotient being $\mathcal{Z}_t'\cap \mathcal{U}'$. 

By construction, $V'\to Z'$ factors through $S'\to Z'$.
Now let $V$ be the closure of $S$ in the function field $k(V')$. Then $V\to S$ is a finite morphism which induces a finite morphism $V\to Z$. Since $Z'\bir Z$ is birational,  $V\to Z$  is generically Galois with Galois group $G$. In fact $V$ is just the closure of $Z$ in $k(V')$. 
In particular, $G$ acts on $V$ with quotient being $Z$. 

Denote $V\to S$ by $\gamma$. Put $L=\sum_{g\in G}g^*(\gamma^*H_S)$. Then $L$ is a $G$-invariant divisor, so $L=\gamma^*\pi^*N$ for some ample Cartier divisor $N$ on $Z$. We show that $|N|$ is a base point free linear system which defines a birational quasi-finite map $Z\to \bar Z$. Thus $Z\to \bar Z$ is the normalization of $\bar Z$ and $\bar N$ is a very ample line bundle on $\bar Z$ with $N=\nu ^* \bar N$. Pick a closed point $z\in Z$. Since $H_S$ is very ample, we can choose $H_S$ in its linear system so that it is effective and that it does not contain any element of $\pi^{-1}\{z\}$. Then $\gamma^*H_S$ does not contain any element of $\gamma^{-1}\pi^{-1}\{z\}$. Since $G$ does not map any point in $V\setminus \gamma^{-1}\pi^{-1}\{z\}$ into $\gamma^{-1}\pi^{-1}\{z\}$,  $g^*(\gamma^*H_S)$ does not intersect $\gamma^{-1}\pi^{-1}\{z\}$ for any $g$, hence 
$L$ does not intersect $\gamma^{-1}\pi^{-1}\{z\}$ as well. Therefore, we can choose $N$ in its linear system so that it does not contain $z$. We can similarly show that if $z_1,z_2$ are general closed points in $Z$, then there is a section of $N$ which vanishes at $z_1$ but does not vanish at $z_2$, so $|N|$ defines a birational map. 

Now we show that $\vol(N)$ is bounded from above. It is enough to show that $\vol(L)$ is bounded. Since $(\alpha^*H_S+\beta^*H_{Z'})^d$ is bounded, there exists a fixed number $l\in \mathbb{N}$ such that $l\beta^*H_{Z'}-\alpha^*H_S$ is big: indeed taking $H_S,H_{Z'}$ general in their linear systems we see that $(S',\alpha^*H_S+\beta^*H_{Z'})$ belongs to a bounded family from which we can deduce the claim. Thus the pullback of $l\beta^*H_{Z'}-\alpha^*H_S$ to $V'$ is big. On the other hand, let $\bar{V}'$ be the closure of $Z'$ in $k(V')$. Then $G$ acts on $\bar{V}'$ with quotient $\bar{V'}/G$ being $Z'$. Moreover, $\bar{V}'\to Z'$ is the finite part of the Stein factorisation of ${V}'\to Z'$, and $V\to Z$ is the finite part of the Stein factorisation of $\bar{V}'\to Z$. Therefore, if $\lambda,\rho$ denote $\bar{V}'\to S$ and $\bar{V}'\to Z'$ respectively, then $l\rho^*H_{Z'}-\lambda^*H_S$ is big because $V'\to \bar{V}'$ is a birational morphism.
So we see that 
$$
|G|l\rho^*H_{Z'}-\sum_{g\in G}g^*(\lambda^*H_S)
$$ 
is big. In particular, 
the volume of $\sum_{g\in G}g^*(\lambda^*H_S)$ is bounded from above which in turn implies volume of $L$ is bounded from above. 

Finally, the above arguments show that $(\bar Z,\bar N)$ belong to a bounded family and hence so do $(Z,N)$. Thus a bounded multiple $H_Z$ of $N$ is very ample. Arguing similarly to the previous paragraph by working on $\bar{V}'$ and comparing with $\rho^*H_{Z'}$ one shows that the volume of $\alpha^*H_S+\beta^*H_{Z'}+\alpha^*\pi^*H_Z$ is also bounded. 

\end{proof}

\subsection{Adjunction for fibrations}
We now consider adjunction for fibrations in the setting of Theorem \ref{t-bnd-base-adjunction}.

\begin{lem}\label{l-adj-fib-bnd-bir-base}
Let $(X,B),A\to Z$ be as in Theorem \ref{t-bnd-base-adjunction}.  

(i)
There exist $2\le q\in \mathbb N$ and a DCC set $\Psi$ depending on $d$, $\Phi$, and $u$ such that we can write an adjunction formula 
\[ q(K_X+B)\sim qf^*(K_Z+B_Z+M_Z)\]
where $qM_{Z''}$ is Cartier for $M_{Z''}$  the moduli part on any sufficiently high resolution $\nu:Z''\to Z$. Moreover, 
$$
(Z,B_Z+M_Z)\in \mathcal{F}_{glc}(e,\Psi,v)
$$ 
for some $0\le e \le d$. 

(ii) 
There exists a bounded set of couples $\mathcal P$ such that for each $(Z,B_Z+M_Z)$ as above, there exists a
log smooth couple $(Z',\Sigma' )\in \mathcal P$ and a birational map $\eta: Z'\dasharrow Z$ such that
\begin{enumerate}
    \item $B_{Z'}\leq \Sigma'$ where $B_{Z'}:={\rm Ex}(\eta)+\eta ^{-1}_*B_Z$ is the sum of the reduced exceptional divisor for $Z'\dasharrow Z$ plus the strict transform of $B_Z$,
    \item the moduli b-divisor descends to $M_{Z'}$ on $Z'$,
    \item $\vol(K_{Z'}+B_{Z'}+M_{Z'})=v$, 
    \item $a(P,Z,B_Z+M_Z)\geq a(P,Z',B_{Z'}+M_{Z'})$ for any prime divisor $P$ over $Z$, and
    \item $Z\bir Z'$ does not contract any divisor.
\end{enumerate}
\end{lem}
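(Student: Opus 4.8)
The plan is to read off part (i) from the canonical bundle formula for the lc-trivial fibration $f\colon(X,B)\to Z$, and then to feed the resulting generalised pair $(Z,B_Z+M_Z)$ into the log birational boundedness of $\mathcal{F}_{glc}$ to obtain part (ii). Since $K_X+B$ is semi-ample and $f$ is the contraction it defines, we have $K_X+B\sim_\Q f^*H$ for an ample $\Q$-divisor $H$ on $Z$, so $(K_X+B)|_F\sim_\Q 0$ for a general fibre $F$ and $f$ is lc-trivial. The canonical bundle formula (cf.\ \cite{BZ16}) then produces the boundary part $B_Z$ and the moduli b-divisor $\mathbf{M}$ with $K_X+B\sim_\Q f^*(K_Z+B_Z+M_Z)$, where $M_Z$ is the trace of $\mathbf{M}$ on $Z$; since $K_Z+B_Z+M_Z\sim_\Q H$, it is ample and $\vol(K_Z+B_Z+M_Z)=H^{\dim Z}=\Ivol(K_X+B)=v$, so we set $e=\dim Z\le d$. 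The discriminant coefficient of $B_Z$ at a prime divisor $P\subset Z$ equals $1$ minus a log canonical threshold computed in dimension $d$ with boundary coefficients in $\Phi$, so by the ACC for log canonical thresholds \cite{HMX14} the coefficients of $B_Z$ lie in a DCC set depending only on $d$ and $\Phi$; moreover $(Z,B_Z+M_Z)$ is generalised lc because $(X,B)$ is lc, by the standard properties of the canonical bundle formula.

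The substantive, and most delicate, step of (i) is a uniform bound on the b-Cartier index of $\mathbf{M}$. First I would check that the general log fibres $(F,B_F)$ form a bounded family: by \cite[Theorem 6.4]{B20}, using that over $\eta_Z$ the divisor $A$ is ample and contains no non-klt centre of $(X,B)$, there is $s\in\Q^{>0}$ depending only on $d,\Phi,u$ so that $(F,B_F+sA_F)$ is a stable pair of volume $s^{\dim F}u$; then \cite[Theorem 1.1]{HMX18} makes these pairs bounded, whence $(F,B_F)$ is bounded and the horizontal coefficients of $B$ and $A$ lie in a finite set $\Phi_0\subset\Phi$. Granting boundedness of the general log fibres, \cite[Theorem 1.7 and Lemma 7.4]{B21} supply an integer depending only on $d,\Phi,u$, which we take as $q\ge 2$ (enlarging it if necessary so that the canonical bundle formula holds in the displayed form $q(K_X+B)\sim qf^*(K_Z+B_Z+M_Z)$), such that on any sufficiently high resolution $\nu\colon Z''\to Z$ the moduli b-divisor descends, $M_{Z''}$ is nef, and $qM_{Z''}$ is Cartier. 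Writing the nef part as $qM_{Z''}$ with coefficient $1/q$, and enlarging the DCC set from the previous paragraph to contain $1/q$ to obtain $\Psi$, we conclude $(Z,B_Z+M_Z)\in\mathcal{F}_{glc}(e,\Psi,v)$, which proves (i).

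For (ii), by (i) each $(Z,B_Z+M_Z)$ as above lies in $\mathcal{F}_{glc}(e,\Psi,v)$ for one of the finitely many values $e\in\{0,\dots,d\}$, so I would apply the log birational boundedness of these families, namely \cite[Theorem 1.2]{B21} together with \cite[Proposition 5.2]{B21}, to each $e$ and take $\mathcal{P}$ to be the union of the finitely many resulting bounded families of couples. This yields a log smooth couple $(Z',\Sigma')\in\mathcal{P}$ and a birational map $\eta\colon Z'\dashrightarrow Z$ with $\Sigma'$ containing the $\eta$-exceptional divisors and the support of $\eta^{-1}_*B_Z$, with the nef part $qM_{Z''}$ descending to $Z'$ (so $\mathbf{M}$ descends there to some $M_{Z'}$), and with $\vol(K_{Z'}+B_{Z'}+M_{Z'})=v$ for $B_{Z'}={\rm Ex}(\eta)+\eta^{-1}_*B_Z$; this gives (1), (2) and (3). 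Property (5) holds because the construction can be arranged so that $Z'\to Z$ is a birational morphism, and then (4) is a crepant pullback computation on a common resolution: along each $\eta$-exceptional divisor $B_{Z'}$ has coefficient $1\ge 1-a(P,Z,B_Z+M_Z)$ since $(Z,B_Z+M_Z)$ is generalised lc, along the strict transform of a divisor on $Z$ the coefficients of $B_{Z'}$ and $B_Z$ agree, and the moduli b-divisor contributes crepantly, so $K_{Z'}+B_{Z'}+M_{Z'}\ge\eta^*(K_Z+B_Z+M_Z)$ as b-divisors, which is precisely inequality (4).
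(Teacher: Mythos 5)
Your part (i) is essentially the paper's argument: the paper first invokes its Lemma \ref{l-bnd-base-adjunction-lc-to-dlt} (whose proof is exactly your \cite[Theorem 6.4]{B20} plus \cite[Theorem 1.1]{HMX18} step) in order to reduce to the case where $A|_F$ is \emph{integral}, and only then applies \cite[Lemma 7.4]{B21} to get the bounded Cartier index $q$ of the moduli part; you skip the integrality reduction, which the paper appears to need for that citation (it makes the same reduction again in Lemma \ref{l-delta-lc-fin-coeff-base-adjunction}), so you should insert it. The DCC statement for the coefficients of $B_Z$ via \cite{HMX14}, the computation $\vol(K_Z+B_Z+M_Z)=\Ivol(K_X+B)=v$, and the enlargement of $\Psi$ by $\tfrac1q\mathbb{Z}$ all match the paper. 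Likewise, deducing (1)--(3) of part (ii) from \cite[Proposition 5.2]{B21} is the paper's route.

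The genuine gap is in your treatment of (4) and (5). For (4), your coefficient comparison (coefficient $1$ on $\eta$-exceptional divisors versus $1-a(P,Z,B_Z+M_Z)\le 1$, agreement on strict transforms) only shows that, on a common resolution $\mu\colon\bar Z\to Z'$, $\nu\colon\bar Z\to Z$, the divisor $F:=\mu^*(K_{Z'}+B_{Z'}+M_{Z'})-\nu^*(K_Z+B_Z+M_Z)$ satisfies $\mu_*F\ge 0$; it says nothing about the coefficients of $F$ along divisors exceptional over \emph{both} $Z$ and $Z'$, which is exactly what the ``as b-divisors'' conclusion (and the statement (4), which concerns all prime divisors over $Z$) requires. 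The missing step is the negativity lemma: $F$ is anti-nef over $Z'$ because $\nu^*(K_Z+B_Z+M_Z)$ is the pullback of an ample divisor while $\mu^*(K_{Z'}+B_{Z'}+M_{Z'})$ is numerically trivial over $Z'$, and together with $\mu_*F\ge0$ this yields $F\ge0$, hence (4). For (5), your claim that ``the construction can be arranged so that $Z'\to Z$ is a birational morphism'' is not available: $Z'$ is forced to lie in the fixed bounded family $\mathcal P$, whereas $Z$ varies over an a priori unbounded set, so you cannot resolve $Z$ inside $\mathcal P$. In the paper, (5) is not arranged by construction but \emph{deduced} from (3) and (4) together with the ampleness of $K_Z+B_Z+M_Z$ via \cite[Lemma 2.17]{B21}; roughly, if $Z\bir Z'$ contracted a divisor then the volume equality in (3) combined with the discrepancy inequality in (4) would be violated. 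As written, your proofs of (4) and (5) do not go through.
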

\begin{proof}
By Lemma \ref{l-bnd-base-adjunction-lc-to-dlt}, we may replace $(X,B),A$ so that we can assume that $A|_F$ is integral. By \cite[Lemma 7.4]{B21}, there exist $2\le p,q\in \mathbb N$ (depending on $d$, $\Phi$, 
and $u$) such that we can write the adjunction formula 
\[ q(K_X+B)\sim qf^*(K_Z+B_Z+M_Z)\]
where $pM_{Z''}$ is Cartier for $M_{Z''}$  the moduli part on any sufficiently high resolution $\nu:Z''\to Z$. Replacing $p,q$ by $pq$ we can assume $qM_{Z''}$ is Cartier. 

Moreover, by \cite{HMX14}, the coefficients of $B_Z$ belong to a DCC set $\Psi$ depending only on $d, \Phi$. Thus replacing $\Psi$ with $\Psi \cup \frac{1}{q}\mathbb{Z}$, it follows that 
$$
(Z,B_Z+M_Z)\in \mathcal{F}_{glc}(e,\Psi,v)
$$ 
where $0\leq e\leq d$. This proves (i).

By \cite[Proposition 5.2]{B21},  there
exists a bounded set of couples $\mathcal P$ such that for each $(Z,B_Z+M_Z)$ as above, there exists a log smooth couple $(Z',\Sigma' )\in \mathcal P$ and a birational map $\eta: Z'\dasharrow Z$ satisfying (1)-(3). To see (4), let $\mu :\bar Z \to Z'$ and $\nu:\bar Z \to Z$ be a common resolution and note that by construction, 
$$
F:=\mu ^*(K_{Z'}+B_{Z'}+M_{Z'})-\nu ^*(K_{Z}+B_{Z}+M_{Z})
$$ 
is anti-nef over $Z'$ and $\mu _* F\geq 0$. By the negativity lemma, $F\geq 0$ and thus (4) holds.
(5) is then an immediate consequence of \cite[Lemma 2.17 ]{B21}.

\end{proof}

\subsection{Reduction to the generic klt case}
We reduce Theorem \ref{t-bnd-base-adjunction} to the generic klt case. This will allow us to eventually reduce Theorem \ref{t-bnd-base-adjunction} to some statement about klt log Calabi-Yau fibrations.

\begin{lem}\label{l-bnd-base-adjunction-lc-to-klt}
Assume that Theorem \ref{t-bnd-base-adjunction} holds when $(X,B)$ is klt over $\eta_Z$. 
Then the theorem holds in general. 
\end{lem}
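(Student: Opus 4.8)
The plan is to reduce the general (generically lc) case to the generically klt case by performing adjunction along a minimal non-klt centre over the generic point of $Z$ and then running an inductive argument on dimension. First I would use Lemma \ref{l-bnd-base-adjunction-lc-to-dlt} to replace $(X,B),A$ by a crepant model so that $(X,B)$ is dlt over $\eta_Z$, the coefficients of $B$ lie in $\Phi\cup\{1\}$, $A$ is integral, and $\vol(A|_{F})$ stays in a finite set $\Pi$; this costs nothing since boundedness of the base $(Z,B_Z+M_Z)$ produced by adjunction only depends on the crepant class of $(X,B)$. If $(X,B)$ is already klt over $\eta_Z$ we are done by hypothesis, so assume otherwise: there is a non-klt centre of $(X,B)$ meeting the generic fibre, and by the dlt property we may pick one, call it $V$, together with the fact that $(X,B)$ is dlt near $\eta_V$.

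Next I would apply Lemma \ref{l-generic-dlt-adjunction} to $V$: passing to the normalisation $V^\nu$ we obtain an adjunction formula $K_{V^\nu}+B_{V^\nu}=(K_X+B)|_{V^\nu}$ with the coefficients of $B_{V^\nu}$ in a DCC set $\Psi$ depending only on $d$ and $\Phi$. The key point is that $K_X+B$ is semi-ample defining $f\colon X\to Z$, so $(K_X+B)|_{V^\nu}$ is semi-ample and $f|_{V^\nu}\colon V^\nu\to Z^\nu$ factors $f$ through the image; restricting $A$ to $V^\nu$ and checking that over $\eta_{Z^\nu}$ the restriction $A|_{V^\nu}$ is still ample and avoids non-klt centres of $(V^\nu,B_{V^\nu})$, one sees that $(V^\nu,B_{V^\nu}),A|_{V^\nu}\to Z^\nu$ again satisfies the hypotheses of Theorem \ref{t-bnd-base-adjunction}, now with $\dim V^\nu<d$ and with boundary coefficients in the DCC set $\Psi$ (the volume $\vol(A|_{V^\nu}\cap \text{general fibre})$ and the Iitaka volume $\Ivol(K_{V^\nu}+B_{V^\nu})$ need to be controlled — the latter because adjunction along $V^\nu$ is crepant so the Iitaka volume is unchanged, and the former by the boundedness of the stable general fibres coming from \cite[Theorem 6.4]{B20} and \cite[Theorem 1.1]{HMX18}, which pins it to a finite set). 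If the centre $V$ is horizontal over $Z$ one should choose $V$ minimal so that $V^\nu\to Z$ is still a contraction onto $Z$ (up to normalising $Z$, which does not affect boundedness); by induction on $d$ the base of this sub-fibration is bounded. The moduli b-divisor $M_Z$ of the original fibration is then governed by, and compatible with, that of the sub-fibration, so one recovers boundedness of $(Z,B_Z+M_Z)$ for the original data. When instead every non-klt centre over $\eta_Z$ is vertical, one can base-change: there is a generically finite cover $Z'\to Z$ after which the pulled-back fibration acquires a horizontal non-klt centre or becomes generically klt, and Lemma \ref{l-finite-cover} together with the uniformity of the Galois construction there lets one descend boundedness from $Z'$ back to $Z$.

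The main obstacle I expect is the descent step: after adjunction along $V^\nu$ and/or after a Galois base change, what is proven bounded is the base of a \emph{sub}-fibration or of a cover, and one must transfer this to $(Z,B_Z+M_Z)$ itself. Concretely one needs: (a) that the induced map from the sub-base (resp. the cover base $Z'$) to $Z$ is birational or finite of controlled degree, which is exactly the setup of Lemma \ref{l-finite-cover}; (b) that the moduli divisors match up to the expected multiple — here one uses that $qM_{Z''}$ is Cartier on a high model by Lemma \ref{l-adj-fib-bnd-bir-base}(i), and that $M_Z$ for the big fibration agrees birationally with the pushforward of the moduli divisor of the sub-fibration since both are computed from the same family of log Calabi-Yau fibres after the adjunction; and (c) uniformity — all the DCC/finite sets $\Psi,\Pi$ and integers $p,q$ must depend only on $d,\Phi,u,v$, not on the individual member, so that the induction on $d$ closes with constants depending only on the original data. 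Assembling (a)–(c) and invoking the log birational boundedness of Lemma \ref{l-adj-fib-bnd-bir-base}(ii) to go from birational boundedness to genuine boundedness of the couple $(Z,\Supp B_Z)$ with the stated intersection bounds $H^{\dim Z}$, $B_Z\cdot H^{\dim Z-1}$, $M_Z\cdot H^{\dim Z-1}$ completes the reduction.
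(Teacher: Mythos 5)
Your overall route is the paper's: reduce to the generically dlt case via Lemma \ref{l-bnd-base-adjunction-lc-to-dlt}, perform adjunction (Lemma \ref{l-generic-dlt-adjunction}) on a minimal non-klt centre $V$ dominating $Z$, apply the assumed klt case to the resulting fibration, and descend boundedness to $Z$ using the bounded birational model $Z'$ of Lemma \ref{l-adj-fib-bnd-bir-base}(ii) together with Lemma \ref{l-finite-cover}. However, there is a genuine error at the pivot of the argument. Minimality of $V$ does \emph{not} make $V^\nu\to Z$ a contraction onto $Z$ (nor does normalising $Z$ fix this): the Stein factorisation $V^\nu\to S\overset{\pi}\to Z$ typically has $\deg\pi>1$, equal to the number of connected components of $F\cap V$ for a general fibre $F$. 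What minimality actually buys -- and what you never state -- is that $(V^\nu,B_{V^\nu})$ is klt over the generic point of $S$, which is precisely what allows you to invoke the hypothesis of the lemma rather than a lower-dimensional lc case. Because $\pi$ is not birational, the Iitaka volume of $K_{V^\nu}+B_{V^\nu}$ is $(\deg\pi)v$, not $v$ as you claim; one must first bound $\deg\pi$ (the paper does this by identifying it with the number of non-klt centres of the bounded general log fibre $(F,B_F+sA_F)$), after which the Iitaka volume takes only finitely many values and the hypotheses of the theorem hold for $(V^\nu,B_{V^\nu}),A|_{V^\nu}\to S$ with controlled data. Your alternative case ``every non-klt centre over $\eta_Z$ is vertical'' is vacuous: that is exactly the statement that $(X,B)$ is klt over $\eta_Z$, so you are done by hypothesis and no Galois base change of the fibration is needed.

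The second gap is in the descent, which you correctly flag as the main obstacle but do not carry out. The klt case gives a very ample $H_S$ on $S$ with $H_S^{e}$ and $\pi^*(K_Z+B_Z+M_Z)\cdot H_S^{e-1}$ bounded; to feed this into Lemma \ref{l-finite-cover} one must bound $(\alpha^*H_S+\beta^*H_{Z'})^{e}$ on the normalised graph $S'$ of $S\bir Z'$. This requires producing, from the bounded family containing $(Z',B_{Z'}+M_{Z'})$, a very ample $H_{Z'}$ with $m(K_{Z'}+B_{Z'}+M_{Z'})-H_{Z'}$ big for fixed $m$, then comparing $\alpha^*\pi^*H_Z$ with $\beta^*H_{Z'}$ by the negativity lemma in codimension one, and finally invoking Khovanskii--Teissier type inequalities to pass from the bound on $\alpha^*H_S^{e-1}\cdot\beta^*H_{Z'}$ to a bound on all mixed products. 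None of this is in your items (a)--(c), and item (b) (matching moduli divisors of the sub-fibration and the original fibration) is not what is needed: the comparison is made at the level of $\pi^*(K_Z+B_Z+M_Z)$, whose pullback to $V^\nu$ is $K_{V^\nu}+B_{V^\nu}$ by construction, not by separately matching the moduli parts.
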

\begin{proof}
\emph{Step 1.} 
Suppose that $(X,B)$ is not klt over $\eta_Z$. Applying Lemma \ref{l-bnd-base-adjunction-lc-to-dlt}, we can assume that $(X,B)$ is dlt over $\eta_Z$. Let $q,\Psi$ and the adjunction formula 
\[ q(K_X+B)\sim qf^*(K_Z+B_Z+M_Z)\]
be as in Lemma \ref{l-adj-fib-bnd-bir-base}.
Let $V$ be a non-klt centre of $(X,B)$ dominating $Z$, 
which is minimal among such dominating non-klt centres. Let $V^\nu\to V$ be the normalisation. Since $(X,B)$ is dlt near the generic point $\eta_V$, by Lemma \ref{l-generic-dlt-adjunction}, we can  
define $K_{V^\nu}+B_{V^\nu}=(K_X+B)|_{V^\nu}$ where the coefficients of $B_{V^\nu}$ are in a fixed DCC set. Let $V^\nu\to S\overset{\pi}\to Z$ be the Stein factorisation 
of $V^\nu\to Z$.  Then $(V^\nu,B_{V^\nu})$ is an lc pair which is klt over $\eta _Z$ and hence klt over $\eta _S$, the generic point of $S$. Clearly, $K_{V^\nu}+B_{V^\nu}$ is semi-ample with Iitaka volume $\deg(\pi)v$ and its associated contraction is just $V^\nu\to S$.\\

\emph{Step 2.}
If $F$ is a general fibre of $X\to Z$ and $K_F+B_F=(K_X+B)|_F$ and $A_F=A|_F$, then $(F,B_F+sA_F)$ is dlt and it belongs to a bounded family for some fixed $s\in \Q^{>0}$, as observed in the proof of Lemma \ref{l-bnd-base-adjunction-lc-to-dlt}, and the coefficients of $B_F+sA_F$ take finitely many possible values. In particular, the non-klt centres of $(F,B_F)$ belong to a bounded family, and the number of such centres is bounded. Now each irreducible component of $F\cap V$ is a non-klt centre of $(F,B_F)$, 
hence the number of connected components of $F\cap V$ is bounded. But $F\cap V$ is the fibre of $V\to Z$ over $f(F)$, so the number of connected components of $F\cap V$ is just the degree of $\pi\colon S\to Z$, so this degree is bounded. In particular, 
this means that 
$$
\vol(\pi^*(K_Z+B_Z+M_Z))=(\deg \pi)v
$$ 
takes finitely many possible values. Moreover, over $\eta_S$, $A|_{V^\nu}$ is ample containing no non-klt centre of $(V^\nu,B_{V^\nu})$. In addition, if $G$ is a general fibre of $V^\nu\to S$, then $G$ is an irreducible component of $F\cap V$, hence it is a non-klt centre of the bounded pair $(F,B_F+sA_F)$, so we can see that the coefficients of $A|_G$ and $\vol(A|_G)$ each take finitely many possible values (note that $(X,B)$ is dlt over $\eta_Z$, so $V=V^\nu$ over $\eta_Z$). Thus the horizontal coefficients of $A|_{V^\nu}$ also take finitely many possible values.

Therefore, we can assume that $(V^\nu,B_{V^\nu})\to S$ satisfies the hypothesis of Theorem \ref{t-bnd-base-adjunction} for certain fixed data $d',\Phi',u',v'$ in place of $d,\Phi,u,v$, and moreover $(V^\nu,B_{V^\nu})$ is klt over $\eta _S$.   
Thus, we may assume that $S$ is bounded by assumption. That is, there is a 
very ample divisor $H_S$ with bounded $H_S^{e}$ and $\pi^*(K_Z+B_Z+M_Z)\cdot H_S^{e-1}$ 
where $e=\dim S=\dim Z$.\\ 

\emph{Step 3.}
Recall the bounded birational model $K_{Z'}+B_{Z'}+M_{Z'}$ given by Lemma \ref{l-adj-fib-bnd-bir-base} (ii). Arguing as in 
Step 4 of the proof of \cite[Theorem 1.3]{B21} and perhaps after replacing $q$ with a 
multiple we can find a family $(\mathcal{Z}', \mathcal{B}'+\mathcal{M}')\to T$ where $T$ is quasi-projective, $(\mathcal{Z}', \mathcal{B}')$ is log smooth over $T$, 
for each $(Z',B_{Z'}+M_{Z'})$ as above there is  a closed point $t\in T$ so that 
$(Z',B_{Z'})$ is isomorphic to $(\mathcal{Z}_t', \mathcal{B}_t')$ and $qM'\sim q\mathcal{M}_t'$, and the set of such points $t$ is dense in each component of $T$. Note that we are not assuming 
$(\mathcal{Z}', \mathcal{B}'+\mathcal{M}')$ to be a generalised pair, i.e. $\mathcal{M}'$ is not necessarily nef over $T$.

By \cite[Theorem 1.3]{BZ16}, there exists an integer $m>0$, depending only on $e,\Psi, v$ such that $|m(K_{Z'}+B_{Z'}+M_{Z'})|$ defines a birational map for each $(Z',B_{Z'}+M_{Z'})$ as above. 
Note that $h^0(m(K_{\mathcal{Z}'}+{\mathcal{B}'}+\mathcal{M}'))$ is locally constant on some dense open subset $T^0\subset T$.
Thus, possibly shrinking $T^0$, by base change of cohomology, we may assume that 
$$
H^0(m(K_{\mathcal{Z}'}+{\mathcal{B}'}+\mathcal{M}'))\to H^0(m(K_{\mathcal{Z}'_t}+{\mathcal{B}'_t}+\mathcal{M}'_t))
$$
is surjective for every closed point $t\in T^0$ which implies that $K_{\mathcal{Z}'}+{\mathcal{B}'}+\mathcal{M}'$ is big over each component of $T$ because 
$|m(K_{\mathcal{Z}'_t}+{\mathcal{B}'_t}+\mathcal{M}'_t)|$ defines a  birational map for a dense subset of points $t\in T$.

Fix a divisor $\mathcal H' $ on $\mathcal{Z}'$ which is very ample over $T$.
Shrinking $T^0$ and replacing $m$ by a multiple, we may assume that $m(K_{\mathcal{Z}'_t}+{\mathcal{B}'_t}+\mathcal{M}'_t)-\mathcal H' _t$ is big for every $t\in T^0$. By Notherian induction, decomposing $T$ in to a finite union of locally closed subsets, we may assume that $m(K_{\mathcal{Z}'_t}+{\mathcal{B}'_t}+\mathcal{M}'_t)-\mathcal H' _t$ is big for every $t\in T$.
In particular, there exists a very ample divisor $H_{Z'}$ such that 
$$
m(K_{Z'}+B_{Z'}+M_{Z'})-H_{Z'}
$$ 
is big. Similarly we can assume that 
$$
-(K_{Z'}+B_{Z'}+M_{Z'})+nH_{Z'}
$$ 
is big for some fixed $n\in\mathbb{N}$.

Let $H_Z$ be the pushforward of $H_{Z'}$ to $Z$. Then $m(K_{Z}+B_{Z}+M_{Z})-H_{Z}$ is also big. Thus 
$$
\pi^*H_Z\cdot H_S^{e-1}\leq m\pi^*(K_{Z}+B_{Z}+M_{Z})\cdot H_S^{e-1}
$$ 
is bounded. Note that $\pi^*H_Z$ makes sense even if $H_Z$ is not $\Q$-Cartier because $\pi$ is finite.\\ 

\emph{Step 4.}
Consider the induced rational map $S\bir Z'$. Let $S'$ be the normalisation of the graph of this map 
and $\alpha\colon S'\to S$ and $\beta \colon S'\to Z'$ the induced morphisms. The divisor $H_Z$ is $\Q$-Cartier on the complement $J$ of a codimension two subset of $Z$, so $\pi^*H_Z$ is $\Q$-Cartier on the complement $J'$ of a codimension two subset of $Z'$. 
By the negativity lemma applied over $J'$, $\alpha ^*\pi^*H_Z-\beta ^* H_{Z'}\geq 0$ holds over $J'$, and so 
$$
\alpha^*H_S^{e-1}\cdot \beta^*H_{Z'}\leq \alpha^*H_S^{e-1}\cdot \alpha ^*\pi^*H_Z
$$ 
is bounded. 
Since $\alpha^*H_S$ and $\beta^*H_{Z'}$ are nef, we have inequalities (cf. \cite[Corollary 1.6.3]{Laz04})
\[ (\alpha^*H_S^{e-1}\cdot \beta^*H_{Z'})^k\geq (\alpha^*H_S^{e-k}\cdot \beta^*H_{Z'}^k)\cdot (\alpha^*H_S^e)^{k-1}\]
so that $\alpha^*H_S^{e-k}\cdot \beta^*H_{Z'}^k$ is bounded for $0\leq k\leq e$.
Thus $(\alpha^*H_S+\beta^*H_{Z'})^e$ is bounded. 

Now  $Z$ is bounded by Lemma \ref{l-finite-cover}. More precisely, there is a very ample 
divisor $L_Z$ on $Z$ such that $(\alpha^*H_S+\beta^*H_{Z'}+\alpha^*\pi^*L_{Z})^e$ 
is bounded from above. In particular, $L_Z^e$ and $L_Z^{e-1}\cdot K_Z$ are  bounded. Since 
$$
-(K_{Z'}+B_{Z'}+M_{Z'})+nH_{Z'}
$$
is big and 
$$
\beta^*(K_{Z'}+B_{Z'}+M_{Z'})\ge \alpha^*\pi^*(K_Z+B_Z+M_Z),
$$
it follows that 
\[L_Z^{e-1}\cdot (K_Z+B_Z+M_Z)\le \alpha^*\pi^*L_Z^{e-1}\cdot \beta^*(nH_{Z'})\]
and thus $L_Z^{e-1}\cdot (K_Z+B_Z+M_Z)$ is bounded from above. Since  $L_Z^{e-1}\cdot K_Z$ is bounded,  $L_Z^{e-1}\cdot B_Z$ and $L_Z^{e-1}\cdot M_Z$ are bounded. Now rename $L_Z$ to $H$ to be as in the statement of the theorem.

\end{proof}

\subsection{Boundedness of generalised lc models with bounded singularities and volume}

\begin{lem}\label{l-bnd-e-lc-vol<v}
Let $d\in \mathbb{N}$, $\Phi\subset \mathbb{Q}^{\ge 0}$ be a finite set, and $\epsilon,v\in \mathbb{Q}^{>0}$. 
Then the set of 
$$
(X,B+M)\in \mathcal{F}_{gklt}(d,\Phi,<v)
$$ 
such that $(X,B+M)$ is generalised $\epsilon$-lc, forms a bounded family.
\end{lem}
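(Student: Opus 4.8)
The plan is to argue by contradiction: reduce to a fixed family via log birational boundedness, identify each $(X,B+M)$ with the generalised lc model of a member of that family, and then use the DCC of volumes to cut the problem down to a single volume, where \cite[Theorem 1.4]{B21} applies. So suppose the family is not bounded, and choose a sequence $(X_i,B_i+M_i)$ in it, with data $X_i'\to X_i$ and $M_i'$, along which $\min_A\{A^d+(K_{X_i}+B_i+M_i)\cdot A^{d-1}\}$ over very ample $A$ on $X_i$ tends to infinity. By \cite[Theorem 1.3]{B21} the volumes $v_i=\vol(K_{X_i}+B_i+M_i)$ lie in a DCC set; since a DCC subset of $\Q^{>0}$ has positive infimum and $v_i<v$, after passing to a subsequence I may assume the $v_i$ are non-decreasing with limit $v_0\in(0,v]$.

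By \cite[Theorem 1.2]{B21} and \cite[Proposition 5.2]{B21} there is a bounded set of log smooth couples from which I can pick, for each $i$, a couple $(\overline X_i,\overline\Sigma_i)$ and a birational map $\overline X_i\bir X_i$ such that $\overline\Sigma_i$ contains the exceptional divisors of $\overline X_i\bir X_i$ and the support of the birational transform of $B_i$, each $M_{i,j}'$ descends to a nef Cartier divisor $\overline M_{i,j}$ on $\overline X_i$, and, writing $\overline B_i$ for the birational transform of $B_i$ plus the reduced exceptional divisor of $\overline X_i\bir X_i$ and $\overline M_i=\sum\mu_j\overline M_{i,j}$, one has $\vol(K_{\overline X_i}+\overline B_i+\overline M_i)=v_i$. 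On a common resolution of the map $\overline X_i\bir X_i$, the negativity lemma together with the fact that $\overline\Sigma_i$ carries all the exceptional divisors shows that the pullback of $K_{\overline X_i}+\overline B_i+\overline M_i$ dominates that of the nef and big divisor $K_{X_i}+B_i+M_i$; as the two have equal volume, the difference is effective and contracted, so $(X_i,B_i+M_i)$ is the generalised lc model of $(\overline X_i,\overline B_i+\overline M_i)$. Hence it suffices to bound these generalised lc models.

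Now I would spread the data out. Following Step 4 of the proof of \cite[Theorem 1.3]{B21} (compare the proof of Lemma \ref{l-bnd-base-adjunction-lc-to-klt}), after passing to a subsequence there are a fixed $q\in\mathbb N$ and a quasi-projective family $(\mathcal V,\mathcal B+\mathcal M)\to T$ with $(\mathcal V,\mathcal B)$ log smooth over $T$ and closed points $t_i\in T$, dense in $T$, with $(\overline X_i,\overline B_i)\cong(\mathcal V_{t_i},\mathcal B_{t_i})$ and $q\overline M_i\sim q\mathcal M_{t_i}$. Replacing $T$ by a component of the closure of $\{t_i\}$, and using that $\mathcal M$ is nef on the dense family of fibres over the $t_i$, the openness-of-ampleness argument of \S\ref{setup} lets me shrink $T$ so that $\mathcal M$ is nef over $T$ while infinitely many $t_i$ remain; thus $(\mathcal V,\mathcal B+\mathcal M)\to T$ is a log smooth family of generalised lc pairs. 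In such a family the volume of the log fibres is constant on a dense open subset (the generalised analogue of \cite[Theorem 1.8]{HMX13}, cf. \cite[Theorem 1.12]{Fil18}), so after one more subsequence all $v_i$ equal a fixed $v'\le v$. Then each $(X_i,B_i+M_i)$ is a generalised klt pair with $K_{X_i}+B_i+M_i$ ample, coefficients of $B_i$ in $\Phi$, nef part of the prescribed shape, and volume $v'$; that is, $(X_i,B_i+M_i)\in\mathcal F_{gklt}(d,\Phi,v')$, which is bounded by \cite[Theorem 1.4]{B21}, contradicting the choice of the sequence.

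The delicate point is the passage from the log birationally bounded, necessarily only \emph{lc}, models $\overline X_i$ to the generalised \emph{klt} models $X_i$ inside a family: one cannot simply run an MMP on $K_{\mathcal V}+\mathcal B+\mathcal M$ over $T$, since even in the big case the MMP for generalised pairs can fail to behave well in families (one of the phenomena exhibited in \S 5), so the generalised lc models must be controlled one fibre at a time and then seen to vary boundedly. It is exactly here that the $\epsilon$-lc hypothesis on $(X_i,B_i+M_i)$ is essential: it keeps the generalised log discrepancies of the divisors contracted by $\overline X_i\bir X_i$ bounded away from zero, ruling out the degenerations responsible for the failure of boundedness of $\mathcal F_{glc}(d,\Phi,v)$ in general, and this is what legitimises the reduction to $\mathcal F_{gklt}(d,\Phi,v')$ and hence the final appeal to \cite[Theorem 1.4]{B21}. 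A secondary technical point is making the local constancy of volume in log smooth families of generalised pairs precise in the generality required above.
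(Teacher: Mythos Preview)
Your overall strategy matches the paper's: pass to a log birationally bounded family via \cite[Theorem 1.2]{B21}, use invariance of volume in that family (\cite[Theorem 1.12]{Fil18}) to reduce to a single fixed volume, and then apply \cite[Theorem 1.4]{B21}. But one step is wrong as written, and your execution differs from the paper's in an instructive way.

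The error: you assert that the openness-of-ampleness argument of \S\ref{setup} lets you shrink $T$ so that $\mathcal M$ becomes nef over $T$. That is false; the argument in \S\ref{setup} only yields nefness of $\mathcal M_\eta$ at the generic point, and Example \ref{e-a} shows one cannot in general arrange $\mathcal M$ to be relatively nef over any open subset. Fortunately the step is unnecessary. As the paper itself uses it, \cite[Theorem 1.12]{Fil18} says that $\vol(K_{\mathcal V_t}+\mathcal B_t+\mathcal M_t)$ is constant on the set of closed points $t$ where $\mathcal M_t$ is nef, with no relative nefness hypothesis on $\mathcal M$. So you should delete the shrinking step and apply \cite{Fil18} directly to the fibres over the $t_i$.

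The difference in execution: you take $\overline B_i$ with the \emph{reduced} exceptional divisor and cite \cite[Proposition 5.2]{B21} for the equality $\vol(K_{\overline X_i}+\overline B_i+\overline M_i)=v_i$. The paper instead sets $\overline B=(1-\epsilon)\overline E+B^\sim$, so that $(\overline X,\overline B)$ is $\epsilon$-lc and log smooth; this guarantees there are only boundedly many prime divisors $D$ on $X$ exceptional over $\overline X$ with $a(D,\overline X,\overline B)\le 1$, and they can all be extracted by a bounded sequence of toroidal blow-ups. After this modification $X\dashrightarrow\overline X$ contracts no divisor, and the paper obtains the volume equality from the two resulting inequalities. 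This is precisely where the paper consumes the $\epsilon$-lc hypothesis. Your route, once the nefness error is removed, appears not to use $\epsilon$-lc at all, so it in fact proves the stronger assertion that $\mathcal F_{gklt}(d,\Phi,<v)$ itself is bounded when $\Phi$ is finite; your closing paragraph claiming that the $\epsilon$-lc hypothesis is ``essential'' does not point to any step of your own argument that actually needs it.
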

\begin{proof}
Decreasing $\epsilon$ we can assume it is rational. 
By \cite[Theorem 1.2]{B21}, there
exists a bounded set of log smooth couples $\mathcal P$ such that for each $(X,B+M)$ as in the lemma there exists a log smooth birational model 
$(\overline{X},\overline{B}+\overline{M})$ 
where $\overline{B}=(1-\epsilon)\overline{E}+B^\sim$, $\overline{E}$ is the reduced exceptional divisor of $\overline{X}\dasharrow X$ and $B^\sim$ is 
the strict transform of $B$, the nef part $M'$ descends to $\overline{X}$ as $\overline{M}$, and $\overline{B}\leq \overline{\Sigma}$ for some couple $(\overline{X},\overline{\Sigma})\in \mathcal P$. 
Note that since $\Phi$ is finite, we can assume that $pM'$ (and hence also $p\overline{M}$) is 
Cartier for some fixed $p\in \mathbb N$. Expanding $\Phi$, we can assume $1-\epsilon$ belongs to 
$\Phi$.

Since $K_X+B+M$ is ample and $(X,B+M)$ is generalised $\epsilon$-lc, for any prime divisor $D$ over $\overline{X}$, we have 
$$
a(D,X,B+M)\ge a(D,\overline{X},\overline{B}+\overline{M})=a(D,\overline{X},\overline{B}).
$$ 
The above inequality is immediate for any divisor on $\overline{X}$ and follows easily, for any prime divisor $D$ over $\overline{X}$, by a standard application of the negativity lemma.
In particular, if $D$ is a divisor on $X$ which is exceptional over $\overline{X}$, then 
$$
1\ge a(D,X,B+M)\ge a(D,\overline{X},\overline{B}+\overline{M})=a(D,\overline{X},\overline{B}).
$$
Since $(\overline{X},\overline{B})$ is bounded and  $\epsilon$-lc, all such divisors $D\subset X$ can be obtained by a bounded number of smooth blowups, which are toroidal with respect to $(\overline{X},\overline{\Sigma})$. 
Therefore, replacing $\overline{X}$ by an appropriate birational model, we can assume that $X\bir \overline{X}$ does not contract any divisor.

Moreover, by the arguments of the proof of \cite[Theorem 1.3]{B21}, there exist finitely many relatively 
log smooth families $(V,\Delta)\to T$ over smooth varieties and $\Q$-Cartier divisors $N$ and a fixed $n\in\mathbb N$ such that  
$(\overline{X},\overline{B})$ is isomorphic to the log fibre $(V_t,\Delta_t)$ over some closed point $t\in T$, and 
$n\overline{M}\sim nN|_{V_t}$ where we have identified $\overline{X}$ and $V_t$ under the given isomorphism. From now on we can 
restrict ourselves to one of the families $(V,\Delta), N\to T$.

By \cite[Theorem 1.12]{Fil18}, $\vol(K_{V_t}+\Delta_t+N_t)$ is independent of $t$ whenever $N_t=N|_{V_t}$ is nef. 
Thus it is enough to consider the case when $\vol(K_{\overline{X}}+\overline{B}+\overline{M})$ is a fixed positive rational number, say $w$. 
But then since $X\bir \overline{X}$ does not contract any divisor,  
$$
\vol(K_{\overline{X}}+\overline{B}+\overline{M})\le \vol(K_X+B+M).
$$
On the other hand, since 
$a(D,X,B+M)\ge a(D,\overline{X},\overline{B}+\overline{M})$ for any prime divisor $D$ over $\overline{X}$, then 
$$
\vol(K_{\overline{X}}+\overline{B}+\overline{M})\ge \vol(K_X+B+M). 
$$
Thus $\vol(K_X+B+M)=w$, hence  
$$
(X,B+M)\in \mathcal{F}_{gklt}(d,\Phi,w).
$$ 
By \cite[Theorem 1.4]{B21}, $\mathcal{F}_{gklt}(d,\Phi,w)$ is bounded so we are done.

\end{proof}

\subsection{Klt log Calabi-Yau fibrations}
We now consider boundedness of log Calabi-Yau fibrations in the klt case.

\begin{lem}\label{l-delta-lc-fin-coeff-base-adjunction}
Let $d\in \mathbb N$, $\Phi\subset \Q^{\ge 0}$ be a finite set, $u,\epsilon\in \Q^{>0}$. 
Then there exist $q\in \mathbb N$ and $\delta \in \Q^{>0}$ satisfying the following.
Consider projective pairs $(X,B)$, contractions $f\colon X\to Z$, and $\Q$-divisors $A$ on $X$ such that 
\begin{itemize}
\item $(X,B)$ is $\epsilon$-lc of dimension $d$,
\item $K_X+B\sim_\Q 0/Z$,
\item $A$ is big over $Z$,
\item the coefficients of $B$ and the horizontal coefficients of $A$ are in $\Phi$, and
\item $\vol(A|_F)\le u$ for general fibres $F$ of $f$. 
\end{itemize} 
Then we can write an adjunction formula 
$$
q(K_X+B)\sim qf^*(K_Z+B_Z+M_Z)
$$
with $qM_{Z'}$ Cartier and $qB_Z$  integral where $M_{Z'}$ is the moduli divisor on any high resolution $Z'\to Z$. Moreover, $(Z,B_Z+M_Z)$ is generalised $\delta$-lc. 
\end{lem}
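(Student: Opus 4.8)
The plan is to treat the two conclusions separately: first produce $q$ via effective adjunction for fibrations with bounded general fibre, and then deduce the generalised $\delta$-lc property by relating the generalised log discrepancies of $(Z,B_Z+M_Z)$ to those of $(X,B)$ through the multiplicities of fibre components in codimension one. The trivial case $\dim X=\dim Z$ should be disposed of first: there $f$ is birational, $M_{Z'}=0$, $B_Z=f_*B$, and $(Z,B_Z)$ is crepant to $(X,B)$ with $f$ contracting only divisors of positive log discrepancy, so $(Z,B_Z)$ is $\epsilon$-lc and any $q$ works. So assume $\dim X>\dim Z$.

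Next I would check that the general log fibres are bounded. The general fibres $(F,B_F)$ with $B_F=B|_F$ are $\epsilon$-lc with $K_F+B_F\sim_\Q 0$, their coefficients lie in the finite set $\Phi$, and $A|_F$ is big with $\vol(A|_F)\le u$ and horizontal coefficients in $\Phi$; by boundedness of polarised $\epsilon$-lc log Calabi--Yau pairs \cite{B20} (used via \cite[Theorem 6.4]{B20} and \cite[Theorem 1.1]{HMX18} just as in the proof of Lemma \ref{l-bnd-base-adjunction-lc-to-dlt}), after the usual normalisation of $A$ (replacing it so that $A|_F$ is very ample and integral, at the cost of enlarging $u$ to a bounded number) the pairs $(F,B_F)$ together with the polarisation $A|_F$ form a bounded family. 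Then \cite[Lemma 7.4]{B21} applies and yields $q\in\N$, depending only on $d,\Phi,u,\epsilon$, with $q(K_X+B)\sim qf^*(K_Z+B_Z+M_Z)$, $qM_{Z'}$ Cartier on every sufficiently high resolution $Z'\to Z$, and $qB_Z$ integral. This gives the first assertion.

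For the $\delta$-lc statement, fix a prime divisor $P$ over $Z$, realise it on a birational model $\bar Z\to Z$ high enough that the moduli b-divisor descends, and take a log resolution $\bar X\to X$ admitting a morphism $\bar f\colon\bar X\to\bar Z$; write $\bar f^*P=\sum_j m_jQ_j$ over $\eta_P$. Since the moduli part has descended, the total b-divisor $K+\mathbf B+\mathbf M$ of the canonical bundle formula descends to $Z$, so $a(P,Z,B_Z+M_Z)$ is $1$ minus the discriminant coefficient of $P$, which on the log resolution is computed by a log canonical threshold:
\[
a(P,Z,B_Z+M_Z)=\operatorname{lct}_{\eta_P}\!\big(\bar X,B_{\bar X};\bar f^*P\big)=\min_j\frac{a(Q_j,X,B)}{m_j}\ \ge\ \frac{\epsilon}{\max_j m_j},
\]
where $K_{\bar X}+B_{\bar X}$ is the crepant pullback of $K_X+B$ and we used that $(X,B)$ is $\epsilon$-lc. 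Hence it suffices to bound $\max_j m_j$ by a number $N=N(d,\Phi,u,\epsilon)$; then $\delta:=\epsilon/N$ works. To obtain this bound I would invoke boundedness of the general fibre once more: by the mechanism behind \cite[Lemma 7.4]{B21} there is a finite base change $\sigma\colon\tilde Z\to Z$ whose degree is bounded in terms of $d,\Phi,u,\epsilon$ (one may take it dividing $q$ after enlarging $q$) such that the normalised pullback $\tilde f\colon\tilde X\to\tilde Z$ is semistable in codimension one; passing through $\bar Z$ and using that fibre multiplicities are multiplied by at most $\deg\sigma$ under $\sigma$ and are governed by the now-semistable codimension-one behaviour (so they stay bounded under further blow-ups of $\tilde Z$), one concludes $\max_j m_j\le q$, i.e.\ $\delta=\epsilon/q$.

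The main obstacle is precisely this last point: a uniform ``semistable reduction in codimension one'' with the degree of the required base change controlled only by $d,\Phi,u,\epsilon$. This is exactly where the hypothesis $\vol(A|_F)\le u$ is indispensable — in the unbounded-volume situation of Example \ref{exa-Iitaka-volume} the corresponding multiplicities tend to infinity and the discriminant coefficients of the base approach $1$, so no uniform $\delta$ can exist. Extracting the bound amounts to reading it off from the structure of effective adjunction for fibrations with bounded general fibre; one must also be mildly careful that the log-pullback boundary on $\tilde X$ may be a sub-boundary and that the moduli b-divisor is compatible with the base change $\sigma$.
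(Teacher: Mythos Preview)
Your plan has a genuine gap in the $\delta$-lc step, and the logical order is reversed compared with the paper.

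You try to obtain generalised $\delta$-lc by first bounding the fibre multiplicities $m_j$ (via a putative bounded-degree semistable reduction in codimension one) and then using $a(P,Z,B_Z+M_Z)\ge \epsilon/\max_j m_j$. But the semistable reduction statement you invoke is not established in the form you need. Even granting a base change $\sigma$ of bounded degree after which the pullback is semistable over codimension-one points of $\tilde Z$, this controls the multiplicities of $f^*D$ only for prime divisors $D$ \emph{on} $Z$; it says nothing about the multiplicities of $\bar f^*P$ for an \emph{exceptional} prime divisor $P$ on a higher model $\bar Z$, which are exactly the $m_j$ appearing in your formula. Blowing up $\bar Z$ further typically forces further blow-ups on $\bar X$, and those multiplicities are not governed by semistability over the original base. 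You flag this as ``the main obstacle'' but do not resolve it, so the argument as written does not close.

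The paper proceeds in the opposite direction. After arranging $A$ to be ample and integral over $Z$ and noting that $(F,\Supp(B_F+A_F))$ belongs to a bounded family (so $\vol(\Supp B_F+A_F)$ is bounded), it invokes \cite[Theorem~1.9]{B18} directly to conclude that $(Z,B_Z+M_Z)$ is generalised $\delta$-lc for some $\delta$ depending only on $d,\epsilon,u$. The bounded fibre multiplicities are then a \emph{consequence} of $\delta$-lc: since $(Z,B_Z+M_Z)$ is $\delta$-lc, for any prime divisor $D$ on $Z$ the pair $(X,B+\delta f^*D)$ is lc over $\eta_D$, which forces the coefficients of $f^*D$ to lie in a fixed finite set. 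This multiplicity bound, together with $\Phi$ finite and $q(K_Z+M_Z)$ Cartier near $\eta_D$, is what yields integrality of $qB_Z$ (after replacing $q$). In particular, \cite[Lemma~7.4]{B21} as used here gives the adjunction formula and the Cartier index of the moduli part, but not the integrality of $qB_Z$; your attribution of that conclusion to the lemma is not justified.
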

\begin{proof}
Removing the vertical components of $A$ we can assume all its components are horizontal, hence all its coefficients are in $\Phi$. Expanding $\Phi$ and replacing $A$ with a multiple we can assume it is integral.
Replacing $X$ with the ample model of $A$ over $Z$, we can assume that $A$ is ample over $Z$. Moreover, if $(F,B_F)$ is a general log fibre of $(X,B)\to Z$ and $A_F=A|_F$, then $(F,\Supp (B_F+A_F))$ belongs to a bounded family by \cite[Corollary 1.6]{B20}, hence we can assume that $\vol(A_F)$ is fixed. 
By \cite[Lemma 7.4]{B21}, there exist $2\le p,q\in \mathbb N$ (depending on $d$, $\Phi$, 
and $u$) such that we can write the adjunction formula 
\[ q(K_X+B)\sim qf^*(K_Z+B_Z+M_Z)\]
where $pM_{Z'}$ is Cartier for $M_{Z'}$  the moduli part on any sufficiently high resolution $\nu:Z'\to Z$. Replacing $p,q$ by $pq$ we can assume $qM_{Z'}$ is Cartier. 

Since $(F,\Supp (B_F+A_F))$ belongs to a bounded family, $\vol(\Supp (B_F)+A_F)$ is bounded from above as $\Phi$ is finite. Then $(Z,B_Z+M_Z)$ is generalised $\delta$-lc for some fixed $\delta>0$ depending only on $d,\epsilon,u$, by \cite[Theorem 1.9]{B18}. 
This implies that multiplicities of the fibres of $f$ over codimension one points are bounded, i.e. for any prime divisor $D$ on $Z$, the coefficients of $f^*D$ over the generic point $\eta_D$ belong to a fixed finite set because $(Z,B_Z+M_Z)$ being generalised $\delta$-lc implies $(X,B+\delta f^*D)$ is lc over $\eta_D$ by definition of adjunction. 

Let $D$ be a component of $B_Z$. 
Since $\Phi$ is finite, there is a fixed integer $r\in \mathbb{N}$ such that  $rq(K_X+B)$ is integral. Moreover, $q(K_Z+M_Z)$ is Cartier near $\eta_D$ because $Z$ is normal so it is smooth on a neighborhood of $\eta_D$ and because both $qK_Z$ and $qM_Z$ are integral.  So over $\eta_D$, the divisor $rqf^*(K_Z+M_Z)$ is integral. Then since
$$
 rq(K_X+B)-rqf^*(K_Z+M_Z)\sim rqf^*B_Z
$$
over $\eta_D$, 
we deduce that  $rqf^*B_Z$ is integral over $\eta_D$. This implies that the coefficient of $D$ in $B_Z$ belongs to a fixed finite set: indeed, let $S$ be a component of $f^*D$ mapping onto $D$; then the coefficient $\mu_Sf^*D$ belongs to a fixed finite set by the previous paragraph; therefore, 
$$
\mu_DB_Z=\mu_Srqf^*B_Z/rq\mu_Sf^*D
$$ 
belongs to a fixed finite set. Thus all the coefficients of 
$B_Z$ belong to a fixed finite set, so replacing $q$ we can ensure $qB_Z$ is integral.

\end{proof}

We apply a recent result of \cite{J22} to prove a birational boundedness statement for klt log Calabi-Yau fibrations. 

\begin{lem}\label{l-bnd-klt-fibs}
Let $d\in \mathbb N$, $\Phi\subset \Q^{\ge 0}$ be a finite set, $u,v,\epsilon\in \Q^{>0}$. 
Consider projective pairs $(X,B)$ and $\Q$-divisors $A$ on $X$ such that 
\begin{itemize}
\item $(X,B)$ is $\epsilon$-lc of dimension $d$,
\item $K_X+B$ is semi-ample defining a contraction $f\colon X\to Z$,
\item $A\ge 0$ is integral and ample over $Z$,
\item the coefficients of $B$ are in $\Phi$, 
\item $\vol(A|_F)\le u$ for general fibres $F$ of $f$, and 
\item $\Ivol(K_X+B)\le v$. 
\end{itemize} 
Then there exist a bounded projective pair $(Y,B_Y)$ over $Z$ and a birational map $X\bir Y/Z$ not contracting any divisor where $K_Y+B_Y$ is the pullback of $K_X+B$ under $Y\bir X$. 
\end{lem}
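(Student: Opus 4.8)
The plan is to deduce boundedness of the base $Z$ from adjunction together with the boundedness of generalised klt pairs of bounded volume, and then to invoke the recent birational boundedness result of \cite{J22} for klt log Calabi--Yau fibrations to conclude. First I would record what the hypotheses give on the general fibres: for a general fibre $F$ of $f$, setting $K_F+B_F:=(K_X+B)|_F$ and $A_F:=A|_F$, the pair $(F,B_F)$ is $\epsilon$-lc with $K_F+B_F\sim_\Q 0$, and $A_F$ is integral and ample with $\vol(A_F)=\vol(A|_F)\le u$. By \cite[Corollary 1.6]{B20} the couples $(F,\Supp(B_F+A_F))$ form a bounded family; in particular the coefficients of $A_F$, hence the horizontal coefficients of $A$, are bounded above, so after enlarging the finite set $\Phi$ by finitely many integers we may assume the horizontal coefficients of $A$ lie in $\Phi$.

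Next I would run the canonical bundle formula. Since $A$ is big over $Z$, Lemma \ref{l-delta-lc-fin-coeff-base-adjunction} produces $q\in\mathbb N$ and $\delta\in\Q^{>0}$ and an adjunction formula
$$
q(K_X+B)\sim qf^*(K_Z+B_Z+M_Z)
$$
with $qB_Z$ integral, with $qM_{Z'}$ nef Cartier on any sufficiently high resolution $Z'\to Z$, and with $(Z,B_Z+M_Z)$ generalised $\delta$-lc, hence generalised klt. As $K_X+B$ is semi-ample and defines $f$, we have $K_X+B\sim_\Q f^*H$ for some ample $\Q$-divisor $H$ on $Z$; comparing with the formula above gives $K_Z+B_Z+M_Z\sim_\Q H$, so $K_Z+B_Z+M_Z$ is ample, and with $e:=\dim Z=\kappa(K_X+B)$ we get $\vol(K_Z+B_Z+M_Z)=H^e=\Ivol(K_X+B)\le v$. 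Since $qB_Z$ is integral and the coefficients of $B_Z$ lie in $[0,1]$, and since $qM_{Z'}$ is nef Cartier, the generalised pair $(Z,B_Z+M_Z)$ lies in $\mathcal F_{gklt}(e,\Phi_1,<v+1)$ with $\Phi_1:=\tfrac1q\mathbb Z_{\ge0}\cap[0,1]$ finite, and it is generalised $\delta$-lc. As $0\le e\le d$, applying Lemma \ref{l-bnd-e-lc-vol<v} for each value of $e$ and taking the finite union shows that the $(Z,B_Z+M_Z)$, and in particular the varieties $Z$, form a bounded family; fix a very ample $H_Z$ on $Z$ with $H_Z^e$ bounded.

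Finally I would invoke \cite{J22}. We are now in the situation of a klt log Calabi--Yau fibration: $(X,B)$ is $\epsilon$-lc of dimension $d$ with $K_X+B\sim_\Q 0/Z$, the divisor $A\ge 0$ is ample over $Z$ with $\vol(A|_F)\le u$ on general fibres, and the base $Z$ lies in a bounded family. By the birational boundedness result of \cite{J22} for such fibrations (with bounded base and bounded polarised general fibres) there is a bounded family of projective pairs $(Y,B_Y)$ equipped with morphisms $Y\to Z$ such that, for each $(X,B),A\to Z$ as above, there is a birational map $X\bir Y/Z$ not contracting any divisor with $K_Y+B_Y$ the pullback of $K_X+B$ under $Y\bir X$. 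This gives the lemma.

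\textbf{Main obstacle.} The reductions above are routine; the essential content is the input from \cite{J22}. Its difficulty is the uniformity: one must choose the birational model $Y$ and the twist by the pullback of an ample divisor from the (now bounded) base needed to render a relatively ample divisor globally semi-ample in a way that is uniform across the whole family, and one must allow $Y$ to acquire extra divisors — which is why the birational map $X\bir Y$ is only required not to contract divisors, rather than to be an isomorphism in codimension one — so as to absorb the divisors that can be contracted when forming relative good minimal or ample models fibrewise.
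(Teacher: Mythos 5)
Your first half (boundedness of the polarised general fibres, the adjunction formula from Lemma \ref{l-delta-lc-fin-coeff-base-adjunction}, and boundedness of $(Z,B_Z+M_Z)$ via Lemma \ref{l-bnd-e-lc-vol<v}) matches the paper's argument. The gap is in the final step, where you attribute to \cite{J22} more than it delivers. \cite[Theorem 1.2]{J22} produces a bounded pair $(V,B_V)$ and a birational map $X\bir V$ such that the pullbacks of $K_X+B$ and $K_V+B_V$ agree on a common resolution; it does \emph{not} assert that $X\bir V$ contracts no divisor, and in general it need not. The paper closes this gap by replacing $(V,B_V)$ with a terminal crepant model $(Y,B_Y)$, which is still bounded by \cite[Theorem 1.3]{B18}, and then observing that for any prime divisor $S$ on $X$ one has $a(S,Y,B_Y)=a(S,X,B)\le 1$, so $S$ cannot be exceptional over the terminal pair $(Y,B_Y)$. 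Your ``main obstacle'' paragraph shows you sensed that extra divisors on $Y$ are the issue, but the argument as written simply asserts the no-contraction property instead of producing the terminalisation that forces it.

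A second omission: before invoking \cite{J22} the paper bounds the Cartier index of $K_Z+B_Z+M_Z$ (writing $m(K_Z+B_Z+M_Z)\sim D\ge 0$ via \cite[Theorem 1.3]{BZ16} and using that $(Z,\Supp D)$ is bounded with coefficients of $D$ in a finite set). This yields two things you skip: first, that $\vol(K_Z+B_Z+M_Z)=\Ivol(K_X+B)$ takes only finitely many values, which is the numerical input needed to apply \cite[Theorem 1.2]{J22} uniformly (your phrase ``bounded base and bounded polarised general fibres'' is not the hypothesis of that theorem); second, that the Cartier index of $K_X+B$ is bounded, which is what puts the coefficients of $B_V$ in a fixed finite set so that the terminal crepant model $(Y,B_Y)$ is bounded. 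Without these two points the last paragraph of your argument does not go through as stated.
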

\begin{proof}
As observed in the proof of Lemma \ref{l-delta-lc-fin-coeff-base-adjunction}, for a general fibre $F$ of $f$, $(F,\Supp (B_F+A_F))$ belongs to a bounded family. 
Using the assumption that $A$ is integral and $\vol(A|_F)\le u$, there are then finitely many possibilities for the horizontal coefficients of $A$. Moreover, by Lemma  \ref{l-delta-lc-fin-coeff-base-adjunction}, there exist fixed $q\in \mathbb N$ and $\delta\in \Q^{>0}$ so that we can write an adjunction formula 
$$
q(K_X+B)\sim qf^*(K_Z+B_Z+M_Z)
$$
with $qM_{Z'}$ Cartier where $M_{Z'}$ is the moduli divisor on any high resolution $Z'\to Z$, and $qB_Z$  integral. Moreover, $(Z,B_Z+M_Z)$ is generalised $\delta$-lc. 

By assumption, 
$$
\vol(K_Z+B_Z+M_Z)={\rm Ivol}(K_X+B)\le v,
$$
so taking $\Psi=\{\frac{i}{q} \mid 0\le i\le q\}$, we see that 
$$
(Z,B_Z+M_Z)\in \mathcal{F}_{gklt}(e,\Psi,<2v),
$$
where $0\le e\le d$. Therefore, by Lemma \ref{l-bnd-e-lc-vol<v}, $(Z,B_Z+M_Z)$ belongs to a bounded family. This implies that the Cartier index of $K_Z+B_Z+M_Z$ is bounded: indeed, by \cite[Theorem 1.3]{BZ16}, there is a fixed $m\in \mathbb{N}$ so that $m(K_Z+B_Z+M_Z)\sim D\ge 0$ where $D$ has integral coefficients. So $(Z,\Supp D)$ is bounded and $D$ has coefficients 
in a fixed finite set. Thus some bounded multiple of $D$ is Cartier, by \cite[Lemma 2.24 or 2.25]{B19}. Note this also shows that the Cartier index of $K_X+B$ is bounded by the above adjunction formula.

The previous paragraph implies that there are finitely many possibilities for $\vol(K_Z+B_Z+M_Z)$. Then applying \cite[Theorem 1.2]{J22} to $(X,B)\to Z$, there exist a bounded projective pair $(V,B_V)$ and a birational map $X\bir V$ such that pullbacks of $K_X+B$ and $K_V+B_V$ agree on any common resolution of $X,V$. In particular, $K_V+B_V$ is semi-ample, hence $V\bir Z$ is a morphism. Since the Cartier index of $K_X+B$ is bounded, the coefficients of $B_V$ belong to a fixed finite set. 
Let $(Y,B_Y)$ be a terminal crepant model of $(V,B_V)$. Then by \cite[Theorem 1.3]{B18}, $(Y,B_Y)$ is bounded. Moreover, $X\bir Y$ does not contract any divisor because for any prime divisor $S$ on $X$ we have 
$$
a(S,Y,B_Y)=a(S,X,B)\le 1.
$$ 
Finally $K_Y+B_Y$ coincides with the pullback of $K_X+B$ under $Y\bir X$. 

\end{proof}

\subsection{Log discrepancies}

\begin{lem}\label{l-ldisc}
Let $(X,B),A\to Z$ be as in Theorem \ref{t-bnd-base-adjunction}. Then 
$$
\{a(D,X,B)\le 1 \mid \mbox{$D$ prime divisor over $X$}\}
$$
is a subset of a fixed finite set depending only on $d,\Phi,u,v$
\end{lem}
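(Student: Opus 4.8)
The plan is to split prime divisors over $X$ into those whose centre dominates $Z$ (\emph{horizontal}) and those that do not (\emph{vertical}): horizontal divisors are controlled by the geometric generic fibre, and vertical divisors are reduced to the horizontal case by induction on $\dim Z$, using the base generalised pair given by the canonical bundle formula. First I would pass to a convenient model. By Lemma~\ref{l-bnd-base-adjunction-lc-to-dlt}, after replacing $(X,B),A$ by a crepant model — which leaves the set $\{a(D,X,B)\le 1\}$ and the Iitaka volume $v$ unchanged, and which lets us assume $(X,B)$ is dlt over $\eta_Z$ and $A$ is an integral divisor, very ample over $\eta_Z$, with $\vol(A|_F)$ in a fixed finite set — I may assume the data is as in Theorem~\ref{t-bnd-base-adjunction} with $\Phi$ replaced by a DCC set $\Phi'$, $u$ by a member of a fixed finite set, and $v$ unchanged. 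By Lemma~\ref{l-adj-fib-bnd-bir-base}(i) there are $q\in\mathbb N$, a DCC set $\Psi$, and an adjunction formula $q(K_X+B)\sim qf^*(K_Z+B_Z+M_Z)$ with $(Z,B_Z+M_Z)\in\mathcal F_{glc}(e,\Psi,v)$ for some $0\le e\le d$; by \cite[Theorem 1.5]{B21} the generalised log discrepancies $a(P,Z,B_Z+M_Z)\le 1$ form a fixed finite set $\Sigma_Z$, so the coefficients of $B_Z$ lie in a fixed finite set and the fibres of $f$ over codimension-one points of $Z$, and of its birational models, have multiplicities bounded by a fixed constant.

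Consider a horizontal $D$. The set of such values $a(D,X,B)$ coincides with the set of log discrepancies of the geometric generic fibre $(X_{\bar\eta},B_{\bar\eta})$, which satisfies $K_{X_{\bar\eta}}+B_{\bar\eta}\sim_\Q 0$ and carries the ample integral divisor $A_{\bar\eta}$ containing no non-klt centre. By \cite[Theorem 6.4]{B20} there is a fixed rational $s>0$, depending only on $d,\Phi,u$, with $(X_{\bar\eta},B_{\bar\eta}+sA_{\bar\eta})$ log canonical, lying in $\mathcal F_{glc}(r,\Phi_1,v_1)$ with $r=\dim X-\dim Z$, $\Phi_1$ a fixed finite set, and $v_1=s^r\vol(A_{\bar\eta})$ in a fixed finite set; by \cite[Theorem 1.5]{B21} the values $a(D,X_{\bar\eta},B_{\bar\eta}+sA_{\bar\eta})\le 1$ form a fixed finite set $\Sigma_F$. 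For any prime divisor $D$ over $X_{\bar\eta}$ with $a(D,X_{\bar\eta},B_{\bar\eta})\le 1$ we have $a(D,X_{\bar\eta},B_{\bar\eta})=a(D,X_{\bar\eta},B_{\bar\eta}+sA_{\bar\eta})+s\,\mult_D(A_{\bar\eta})$; the first summand lies in $\Sigma_F$ and is $\ge 0$, while $\mult_D(A_{\bar\eta})$ is a nonnegative integer with $s\,\mult_D(A_{\bar\eta})\le a(D,X_{\bar\eta},B_{\bar\eta})\le 1$, so $a(D,X_{\bar\eta},B_{\bar\eta})$ lies in the fixed finite set $\Sigma_F+s\{0,1,\dots,\lfloor 1/s\rfloor\}$. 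Here the integrality of $A$ is exactly what makes the correction term finite.

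Now a vertical $D$, by induction on $\dim Z$ (the case $\dim Z=0$ being the previous paragraph). Passing to high models $X''\to X$, $Z''\to Z$ with a morphism $f''\colon X''\to Z''$ on which the moduli b-divisor descends and on which $D$ is a prime divisor, and — by blowing up $Z''$ further and adjusting $X''$ — assuming that the centre $W$ of $D$ on $Z''$ is a prime divisor when $D$ dominates a divisor over $Z$, the identity $K_{X''}+B_{X''}=f''^*(K_{Z''}+B_{Z''}+M_{Z''})$ expresses $a(D,X,B)$ in terms of: the generalised log discrepancy $a(W,Z,B_Z+M_Z)$, which lies in $\Sigma_Z$ when it is $\le 1$; the multiplicity of $f''^*W$ along $D$, bounded by the bound on fibre multiplicities above; and a log discrepancy of the log Calabi--Yau fibration obtained by localising $(X'',B_{X''})\to Z''$ at $\eta_W$, which after generic smoothness and a further application of Lemma~\ref{l-adj-fib-bnd-bir-base} falls under the inductive hypothesis. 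Combining these, $a(D,X,B)$ lies in a fixed finite set depending only on $d,\Phi,u,v$.

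I expect the vertical case to be the main obstacle. One must make precise the formula relating $a(D,X,B)$ to the discriminant coefficients, the moduli-part coefficients and the fibre multiplicities over centres of arbitrary codimension; in particular one must handle divisors $W$ over $Z$ whose generalised log discrepancy exceeds $1$, where the relative canonical divisor $K_{X''/Z''}$ contributes a negative coefficient along $D$ that has to be bounded. And one must arrange the induction so that localising the fibration at the generic point of $W$ again yields data to which Theorem~\ref{t-bnd-base-adjunction} and the lemmas of this section apply — crucially keeping the Iitaka volume, and hence the family $\mathcal F_{glc}$ into which the base maps, under control; this is exactly where the hypothesis $\Ivol(K_X+B)=v$ is used.
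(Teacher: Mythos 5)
Your proposal is a from-scratch argument, whereas the paper's own proof of Lemma \ref{l-ldisc} is a one-line deferral: it observes that the proof of \cite[Lemma 8.2]{B21} applies verbatim once one notes that ampleness of $K_X+B+A$ is never used there and that the existence of $t$ with $(X,B+tA)$ lc is only needed over $\eta_Z$. Measured on its own terms, the horizontal half of your argument is essentially sound: after Lemma \ref{l-bnd-base-adjunction-lc-to-dlt} makes $A$ integral and very ample over $\eta_Z$ (so that $A_{\bar\eta}$ is Cartier and $\mult_D(A_{\bar\eta})$ really is a nonnegative integer), the identity $a(D,X_{\bar\eta},B_{\bar\eta})=a(D,X_{\bar\eta},B_{\bar\eta}+sA_{\bar\eta})+s\,\mult_D(A_{\bar\eta})$ together with \cite[Theorem 6.4]{B20} and the finiteness of log discrepancies for the bounded family of stable pairs $(F,B_F+sA_F)$ does give a fixed finite set for the horizontal log discrepancies $\le 1$.

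The vertical half, however, is a plan with a genuine hole, which you yourself flag as ``the main obstacle.'' For a vertical $D$ lying over a divisor $W$ on a model $Z''$, the identity $K_{X''}+B_{X''}=f''^*(K_{Z''}+B_{Z''}+M_{Z''})$ does \emph{not} express $a(D,X,B)$ as a sum of a base discrepancy, a fibre multiplicity, and a discrepancy of a lower-dimensional fibration: the canonical bundle formula only controls the lc threshold $t_W$ of $f''^*W$ over $\eta_W$, i.e.\ the \emph{infimum} of $a(D,X'',B_{X''})/\mult_D(f''^*W)$, not the individual values. To get those you must prove finiteness of the log discrepancies of $(X'',B_{X''}+t_Wf''^*W)$ localised at $\eta_W$; this is a pair over a one-dimensional local base with the same fibre dimension, so it is not an instance of an induction on $\dim Z$ in which the hypotheses of Theorem \ref{t-bnd-base-adjunction} reproduce themselves. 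Worse, your bound on the multiplicities of $f''^*W$ is derived from $t_W>0$, i.e.\ from the discriminant coefficient of $W$ being $<1$; when $W$ is a generalised non-klt centre of $(Z,B_Z+M_Z)$ one has $t_W=0$ and no multiplicity bound at all — and this lc (as opposed to klt) locus is exactly where the difficulty of the whole section lives. As written, the vertical case is therefore a gap, and the statement is not proved; to close it one either needs the boundedness/toroidal structure exploited in \cite[Lemma 8.2]{B21}, or a genuinely new argument for divisors over the generalised non-klt locus of the base.
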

\begin{proof}
This follows from the proof of \cite[Lemma 8.2]{B21}. In \cite[Lemma 8.2]{B21}, in addition to our assumptions, it is assumed that $(X,B),A$ is a stable pair meaning that $K_X+B+A$ is ample and that $(X,B+tA)$ is lc for some $t>0$. But the proof does not use ampleness of $K_X+B+A$ (see the remark after the proof of \cite[Lemma 8.2]{B21}) and it uses existence of $t$ only over $\eta_Z$ which is satisfied in our situation. So the same exact proof applies to our situation.

\end{proof}

\subsection{Proof of \ref{t-bnd-base-adjunction}}
We are now ready to prove our main result on boundedness of bases of fibrations.

\begin{proof}[Proof of Theorem \ref{t-bnd-base-adjunction}]
\emph{Step 1.}
By Lemma \ref{l-bnd-base-adjunction-lc-to-klt}, we can assume that $(X,B)$ is klt over $\eta_Z$. 
Moreover, applying Lemma \ref{l-bnd-base-adjunction-lc-to-dlt}, we can assume that $A$ is integral. Recall that by Lemma \ref{l-adj-fib-bnd-bir-base} we have the adjunction formula 
\[ q(K_X+B)\sim qf^*(K_Z+B_Z+M_Z)\] 
with $qM_{Z'}$ Cartier where $M_{Z'}$ is the moduli divisor on any high resolution $Z'\to Z$, and that 
$$
(Z,B_Z+M_Z)\in \mathcal F _{\rm glc}(e,\Psi,v)
$$ 
where $e=\dim Z$ and $q,\Psi$ depend only on $d,\Phi,u$.

By Lemma \ref{l-adj-fib-bnd-bir-base}, we have a bounded birational model $(Z',B_{Z'}+M_{Z'})$ of $(Z,B+M)$ 
so that the moduli divisor descends to $Z'$ as $M_{Z'}$. 
Here $Z\bir Z'$ does not contract any divisor and $B_{Z'}$ is the sum of the birational transform of $B_Z$ and the reduced exceptional divisor of $Z'\bir Z$. 

Also recall from Step 3 of the proof of Lemma \ref{l-bnd-base-adjunction-lc-to-klt} that there is a very ample divisor $H_{Z'}$ such that 
$$
m(K_{Z'}+B_{Z'}+M_{Z'})-H_{Z'}
$$ 
and
$$
-(K_{Z'}+B_{Z'}+M_{Z'})+nH_{Z'}
$$ 
are big for some fixed $m,n\in\mathbb{N}$.\\

\emph{Step 2.} 
Take a log resolution $\phi\colon W\to X$ of $(X,B)$ such that $W\bir Z'$ is a morphism and write $K_W+B_W=\phi^*(K_X+B)$. Let $\Gamma_W=B_W^{\ge 0}$. Then 
$$
K_W+\Gamma_W=\phi^*(K_X+B)+P
$$ 
where $P\ge 0$ is exceptional over $X$. Run an MMP on $K_W+\Gamma_W$ over $Z'$ with scaling of some ample divisor. We reach a model $Y$ on which $K_Y+\Gamma_Y$ is a limit of movable$/Z'$ $\Q$-divisors. Let $U'\subset Z'$ be the largest open set on which $Z'\bir Z$ is an isomorphism. Then $(X,B)$ is a weak lc model of $(W,\Gamma_W)$ over $U'$, so $(W,\Gamma_W)$ has a minimal model over $U'$ \cite[Corollary 3.7]{B12}, hence 
the MMP terminates over $U'$ and $K_Y+\Gamma_Y\sim_\Q 0$ over $U'$ \cite[Theorem 1.9]{B12}. In particular, $P_Y=0$ over $U'$ and $P_Y$ is vertical over $Z'$. 

Actually the MMP terminates over the complement of a codimension $\ge 2$ subset $S'\subset Z'$: indeed, let $D$ be a prime divisor on $Z'$; then over $\eta_D$, we have  $P_Y\sim_\Q Q_Y\ge 0$ where $Q_Y$ is very exceptional over $\eta_D$ meaning $Q_Y$ does not contain some component of the pullback of $D$ that maps onto $D$; but then the MMP terminates over $\eta_D$ by \cite[Theorem 1.8]{B12} and  $K_Y+\Gamma_Y\sim_\Q 0$ over $\eta_D$. Therefore, $K_Y+\Gamma_Y\sim_\Q 0$ over $Z'\setminus S'$ for some codimension $\ge 2$ subset $S'\subset Z'$.\\ 

\emph{Step 3.}
Over $Z'\setminus S'$  we have the adjunction formula 
$$
K_Y+\Gamma_Y\sim_\Q g^*(K_{Z'}+\Gamma_{Z'}+M_{Z'})
$$
where $g$ denotes $Y\to Z'$, $\Gamma_{Z'}$ is the discriminant divisor and $M_{Z'}$ is the moduli divisor. Since the pullbacks of $K_X+B$ and $K_Y+\Gamma_Y$ to a common resolution of $X,Y$ agree over $U'$, $M_{Z'}$ is the same as the moduli part in Step 1 induced by $(X,B)\to Z$ \cite[3.4(2)]{B19}, and the pushforward of $\Gamma_{Z'}$ to $Z$ is just $B_Z$. 

Now consider 
$$
I_Y:=g^*(K_{Z'}+\Gamma_{Z'}+M_{Z'})-(K_Y+\Gamma_Y).
$$
By the above discussions, $I_Y\sim_\Q 0$ on $g^{-1}(Z'\setminus S')$, so $I_Y\sim_\Q J_Y$ where $\Supp J_Y$ maps into $S'$. In particular, $J_Y$ is very exceptional over $Z'$. Moreover, 
$$
-J_Y\sim_\Q -I_Y\sim_\Q K_Y+\Gamma_Y/Z',
$$
so $-J_Y$ is a limit of movable$/Z'$ $\Q$-divisors which shows $-J_Y\cdot C\ge 0$ for the very 
general curves of $E\to Z'$ for any component $E$ of $J_Y$, hence $J_Y$ is effective by the general negativity lemma (cf. \cite[Lemma 3.3]{B12}). 

We then deduce that 
$$
K_Y+\Gamma_Y+J_Y\sim_\Q g^*(K_{Z'}+\Gamma_{Z'}+M_{Z'})
$$
where $J_Y$ is effective.\\ 

\emph{Step 4.}
Let $Z''$ be a common resolution of $Z,Z'$. 
Let $\alpha\colon T\to X$ and $\beta\colon T\to Y$ be a common resolution so that the induced maps $T\bir W$ and $T\bir Z''$ are morphisms. 
Let
$$
G:=\beta^*(K_Y+\Gamma_Y+J_Y)-\alpha^*(K_X+B).
$$  
Then 
$$
\beta_*G=K_Y+\Gamma_Y+J_Y-\beta_*\alpha^*(K_X+B)
$$
$$
=K_Y+\Gamma_Y+J_Y-\psi_*(K_W+B_W) 
$$
$$
=\Gamma_Y+J_Y-\psi_*B_W\ge \psi_*\Gamma_W-\psi_*B_W \ge 0
$$ 
where $\psi$ denotes $W\bir Y$.
Thus, by the negativity lemma, $G\ge 0$ as $G$ is anti-nef over $Y$. 

Now since both $T\to Z'$ and $T\to Z$ factor through $Z''$ and since  
$$
\beta^*(K_Y+\Gamma_Y+J_Y)\sim_\Q 0/Z'
$$
and
$$
\alpha^*(K_X+B)\sim_\Q 0/Z,
$$
we have $G\sim_\Q 0/Z''$. Thus $G$ is the pullback of a divisor $E$ on $Z''$. By Step 2, 
over $U'$, we have 
$$
\beta^*(K_Y+\Gamma_Y)=\alpha^*(K_X+B),
$$ 
so over $U'$, $G=\beta^*J_Y$. By Step 3, $\Supp J_Y$ maps into $Z'\setminus S'$. This implies that $E$ is exceptional over $Z'$ which in turn implies that it is exceptional over $Z$ 
as $Z\bir Z'$ does not contract divisors. 

Therefore, 
$h_*\mathcal{O}_T(lG)\simeq \mathcal{O}_Z$ for every $l\in \N$ where $h$ denotes $T\to Z$. 
This in turn implies that we have an equality of algebras 
$$
R(q(K_Y+\Gamma_Y+J_Y))=R(q(K_X+B))
$$
because 
$$
lG\le {\rm Fix}|l\beta ^*(K_Y+\Gamma _Y+J_Y)|
$$ 
for any $l\in \N$ divisible by $q$.\\

\emph{Step 5.}
By Lemma \ref{l-ldisc}, the log discrepancies $a(D,X,B)\le 1$ belong to a fixed finite set depending only on $d,\Phi,u,v$. Thus the non-negative coefficients of $B_W$ belong to a fixed finite set, hence replacing $q$, we can assume that $q\Gamma_W=qB_W^{\ge 0}$ is integral. 
In particular, no coefficient of $\Gamma_W$ belongs to $(1-\frac{1}{q},1)$. 

Let 
$$
\Delta_Y:=\Gamma_Y-\frac{1}{q}\lfloor{\Gamma_Y}\rfloor +L_Y
$$ 
where $qL_Y$ is the pullback of a general element of $|3qdH_{Z'}|$. We claim that $(Y,\Delta_Y)$ is $\frac{1}{q}-lc$. Let $D$ be a prime divisor over $Y$. If $D$ is not exceptional over $Y$, then 
$a(D,Y,\Delta_Y)\ge \frac{1}{q}$ by definition of $\Delta_Y$. Assume $D$ is exceptional over $Y$. Then we can assume the centre of $D$ is not contained in $\Supp L_Y$, so 
$$
a(D,Y,\Delta_Y)\ge a(D,Y,\Gamma_Y)\ge a(D,W,\Gamma_W).
$$ 
If $a(D,W,\Gamma_W)>0$, then $a(D,W,\Gamma_W)\ge \frac{1}{q}$ as $q(K_W+\Gamma_W)$ is Cartier, so $a(D,X,\Delta_Y)\ge \frac{1}{q}$. So assume $a(D,W,\Gamma_W)=0$. Since $K_W+B_W$ is semi-ample and since $P$ (of Step 2) does not contain any non-klt centre of $(W,\Gamma_W)$, $W\bir Y$ is an isomorphism near the generic point of each non-klt centre of $(W,\Gamma_W)$. Then $W\bir Y$ is an isomorphism near the centre of $D$, hence again $a(D,X,\Delta_Y)\ge \frac{1}{q}$ because then $(Y,\Delta_Y)$ is klt log smooth near the centre of $D$ and  $q(K_Y+\Delta_Y)$ is Cartier.\\ 

\emph{Step 6.}
We claim that $\Ivol(K_Y+\Delta_Y)$ is bounded from above. Indeed 
$$
\Ivol(K_Y+\Delta_Y)\le \Ivol(K_Y+\Gamma_Y+L_Y)
$$
$$
\le \Ivol(K_Y+\Gamma_Y+J_Y+L_Y)
$$
$$
=\vol(K_{Z'}+\Gamma_{Z'}+M_{Z'}+3dH_{Z'})
$$
$$
\le \vol((1+3md)(K_{Z'}+\Gamma_{Z'}+M_{Z'}))
$$
$$
\le \vol((1+3md)(K_Z+B_Z+M_Z))
$$
$$
=(1+3md)^{\dim Z}v
$$
where $m$ is as in Step 1. The last inequality is actually equality by the isomorphism at the end of Step 4. \\

\emph{Step 7.}
Since $(X,B)$ is assumed klt over $\eta_Z$, $(Y,\Gamma_Y)$ is klt over $\eta_{Z'}$, so $\lfloor{\Gamma_Y}\rfloor$ is vertical over $Z'$. Thus over $\eta_{Z'}$, 
$$
K_Y+\Delta_Y=K_Y+\Gamma_Y-\frac{1}{q}\lfloor{\Gamma_Y}\rfloor +L_Y=K_Y+\Gamma_Y\sim_\Q 0.
$$ 
So $(Y,\Delta_Y)$ has a good minimal model $(Y',\Delta_{Y'})$ over $Z'$ by \cite[Theorem 1.5]{B12} and \cite{HX13}. By our choice of $L_Y$ and by boundedness of the length of extremal rays, $(Y',\Delta_{Y'})$ is globally a good minimal model of $(Y,\Delta_Y)$.  Note that by construction, $(Y',\Delta_{Y'})$ is $\frac{1}{q}$-lc, and $Y\bir Y'$ is an isomorphism over $\eta_{Z'}$, so $(Y',\Delta_{Y'})$ is birationally a crepant model of $(X,B)$ over $\eta_Z$ because $(Y,\Gamma_Y)$ is so.

Let $Y'\to \bar{Z}/Z'$ be the contraction defined by $K_{Y'}+\Delta_{Y'}$.
Let $A_W$ be the pullback of $A$ to $W$ and let $A_{Y'}$ be the pushforward to $Y'$ of the horizontal$/\bar{Z}$ part of $A_W$ (note that $A$ may not be $\Q$-Cartier globally but it is $\Q$-Cartier over $\eta_Z$ and we only need the pullback of $A$ over $\eta_Z$ in order to define $A_{Y'}$). Then over $\eta_{\bar{Z}}$, $A_{Y'}$ is the pullback of $A$ under $Y'\bir X$. By construction, over $\eta_Z$, $X\bir Y'$ does not contract any divisor which means that, over $\eta_Z$, $X$ is the ample model of $A_{Y'}$. Thus replacing $Y'$ with the ample model of $A_{Y'}$ over $\bar{Z}$, $X\bir Y'$ becomes an isomorphism over $\eta_{\bar{Z}}$. In particular, now $A_{Y'}$ is an integral divisor because $A$ is an integral divisor by Step 1.\\ 

\emph{Step 8.}
By construction, $(Y',\Delta_{Y'})$ is $\frac{1}{q}$-lc, $q\Delta_{Y'}$ is integral, $A_{Y'}$ is integral and ample over $\bar{Z}$, the volume of $A_{Y'}$ on the general fibres of $Y'\to \bar{Z}$ is bounded, and the Iitaka volume of $K_{Y'}+\Delta_{Y'}$ is bounded. Therefore, 
by Lemma \ref{l-bnd-klt-fibs}, $(Y',\Delta_{Y'})$ is crepant birationally bounded in the sense that there is a bounded projective pair $(Y'',\Delta_{Y''})$ over $\bar{Z}$ where $Y''\bir Y'/\bar{Z}$ is a birational map whose inverse does not contract any divisor, and $K_{Y''}+\Delta_{Y''}$ is the pullback of $K_Y+\Delta_Y$. In particular, there exists a very ample divisor $R_{Y''}$ so that 
$\vol(K_{Y''}+\Delta_{Y''}+R_{Y''})$ is bounded which implies that 
$\vol(R_{Y''}+H_{Y''})$ is bounded where $H_{Y''}$ is the pullback of $H_{Z'}$, because of our choice of $L_Y\le \Delta_Y$.

Let $K_{Y'}+\Gamma_{Y'}+J_{Y'}$ be the pushforward of $K_{Y}+\Gamma_{Y}+J_{Y}$. Since 
$$
K_{Y}+\Gamma_{Y}+J_{Y}\sim_\Q g^*(K_{Z'}+\Gamma_{Z'}+M_{Z'}),
$$
$K_{Y'}+\Gamma_{Y'}+J_{Y'}$ is $\Q$-Cartier. 
Let $K_{Y''}+\Omega_{Y''}$ be the pullback of $K_{Y'}+\Gamma_{Y'}+J_{Y'}$. Then from $\Gamma_{Y'}+L_{Y'}\ge \Delta_{Y'}$ we get $\Omega_{Y''}+L_{Y''}\ge \Delta_{Y''}$, where $L_{Y''}$ is the pullback of $L_{Y'}$.

By Step 1, 
$$
-(K_{Z'}+B_{Z'}+M_{Z'})+nH_{Z'}
$$ 
is big for some fixed $n$. 
Note since the pushforwards of $\Gamma_{Z'}$ and $B_{Z'}$ to $Z$ are both equal to $B_Z$ and $B_{Z'}$ contains all the exceptional divisors of $Z'\bir Z$ with coefficient one, it follows that $\Gamma_{Z'}\le B_{Z'}$. 
This in turn implies that
$$
-(K_{Z'}+\Gamma_{Z'}+M_{Z'})+nH_{Z'}
$$ 
is big.  Therefore, 
$$
(K_{Y''}+\Omega_{Y''})\cdot R_{Y''}^{d-1}\le nH_{Y''}\cdot R_{Y''}^{d-1}\le n\vol(H_{Y''}+R_{Y''})
$$ 
is bounded, hence $\Omega_{Y''}\cdot R_{Y''}^{d-1}$ is bounded.\\ 

\emph{Step 9.}
Recall $T$ from Step 4. We can assume it is a common resolution of $W,Y',Y''$. 
Let $K_T+B_T$ be the pullback of $K_X+B$ and let $B_{Y''}$ be the pushforward of $B_T$. Then applying the negativity lemma over $Y'$ or just using Step 4, 
$$
\theta^*(K_{Y'}+\Gamma_{Y'}+J_{Y'})-(K_T+B_T)\ge 0
$$ 
where $\theta$ denotes $T\to Y'$. So using the fact that 
$K_{Y''}+\Omega_{Y''}$ is the crepant pullback of $K_{Y'}+\Gamma_{Y'}+J_{Y'}$, we have  
$$
K_{Y''}+\Omega_{Y''}\ge K_{Y''}+B_{Y''}.
$$ 
Thus if $\Gamma_{Y''}:=B_{Y''}^{\ge 0}$, then $\Omega_{Y''}\ge \Gamma_{Y''}$, hence $\Gamma_{Y''}\cdot R_{Y''}^{d-1}$ is bounded by Step 8.

Note that the coefficients of $\Gamma_{Y''}$ belong to a fixed finite set as in Step 5, so $(Y'',\Gamma_{Y''})$ belongs to a bounded family. So there is a log resolution $V\to Y''$ so that if $\Theta$ is the support of the birational transform of $\Gamma_{Y''}$ union the reduced exceptional divisor, then $(Y,\Theta)$ is log smooth and bounded.\\

\emph{Step 10.}
Replacing $T$ with a higher resolution, we can assume $T\bir V$ is a morphism. Let $\Gamma_V=B_{V}^{\ge 0}$ where $B_V$ is the pushforward of $B_T$. Then 
$\Supp \Gamma_V\subseteq \Supp \Theta$, so $(V,\Gamma_V)$ belongs to a bounded family. Moreover, the coefficients of $\Gamma_V$ belong to a fixed finite set.

Now  
$$
R(q(K_X+B))=R(q(K_T+B_T))=R(q(K_T+B_T^{\ge 0}))\subseteq R(q(K_V+\Gamma_V)).
$$ 
Moreover, $\Gamma_{Y'}$ is just the pushforward of $\Gamma_V$ to $Y'$.
Then 
$$
R(q(K_V+\Gamma_V))\subseteq R(q(K_{Y'}+\Gamma_{Y'}))
$$
$$
\subseteq R(q(K_{Y'}+\Gamma_{Y'}+J_{Y'}))
=R(q(K_{Y}+\Gamma_{Y}+J_{Y}))
$$
$$
=R(q(K_X+B))
$$ 
where the last equality is by Step 5 while the preceding equality follows from the fact that $$
K_{Y}+\Gamma_{Y}+J_{Y}\sim_\Q 0/Z'.
$$

Therefore, 
$$
R(q(K_V+\Gamma_V))=R(q(K_X+B))
$$ 
meaning that $Z$ is the lc model of $(V,\Gamma_V)$. But then since $(V,\Gamma_V)$ is log smooth and it belongs to a bounded family, its lc model also belongs to a bounded family, by \cite[Theorem 1.2]{HMX18}. More precisely, there is a very ample divisor $H_Z$ so that $H_Z^{\dim Z}$ and  $(K_Z+B_Z+M_Z)\cdot H_Z^{\dim Z-1}$ are bounded.

\end{proof}


\bigskip

Yau Mathematical Sciences Center, Jing Zhai, Tsinghua University, Hai Dian District, Beijing, China 100084; email: birkar@tsinghua.edu.cn\\

Department of Mathematics, University of Utah,
155 South 1400 East, JWB 233, Salt Lake City, UT 84112, USA; e-mail: hacon@math.utah.edu


\begin{thebibliography}{elmnp}

\bibitem[A99]{A99}
F. Ambro, \emph{The Adjunction Conjecture and its Applications}, ProQuest LLC, Ann
Arbor, MI, 1999, Thesis (Ph.D.)--The Johns Hopkins University.



\bibitem[B12]{B12}
C.~Birkar, \emph{Existence of log canonical flips and a special LMMP},
Pub. Math. IHES., \textbf{115} (2012), 325--368.

\bibitem[B18]{B18} 
C. Birkar, {\emph{Log Calabi-Yau fibrations.}}
 arXiv:1811.10709v2.

\bibitem[B19]{B19}
C. Birkar, \emph{Anti-pluricanonical systems on Fano varieties}, Ann. of Math. Vol. 190, No. 2 (2019),
pp. 345--463.

\bibitem[B20]{B20}  
C. Birkar, {\emph{Geometry and moduli of polarised varieties.}}
arXiv:2006.11238v1 (2020). 


\bibitem[B21]{B21}
C. Birkar, {\it{Boundedness and volume of generalised pairs}}, arXiv:2103.14935v2.

\bibitem[B21a]{B21a}  C. Birkar, {\emph{Singularities of linear systems and boundedness of Fano varieties.}} Ann. of Math, \textbf{193}, No. 2 (2021), 347--405.

\bibitem[B21b]{B21b}  C. Birkar, {\emph{Generalised pairs in birational geometry}}, EMS Surv. Math. Sci. 8 (2021), 5--24.


\bibitem[BCHM10]{BCHM10}
C.~Birkar, P.~Cascini, C.~Hacon and J.~M$^{\rm c}$Kernan;
\emph{Existence of minimal models for varieties of log general type},
J. \ Amer. \ Math. \ Soc. \textbf{23} (2010), no. 2, 405--468.


 \bibitem[BZ16]{BZ16}
C.~Birkar and D-Q. Zhang, 
\emph{Effectivity of Iitaka fibrations and pluricanonical systems of polarized pairs},
 Pub. Math. IHES. \textbf{123} (2016), 283--331.
 


\bibitem[Fil18]{Fil18}
S. Filipazzi. 
\emph{Boundedness of log canonical surface generalized polarized pairs}. 
Taiwanese J. Math. 22.4 (2018), pp. 813-850.

\bibitem[Fil20]{Fil20} 
S. Filipazzi, \emph{On a generalized canonical bundle formula and generalized adjunction}, Ann. Sc. Norm. Super. Pisa Cl. Sci. (5) 21 (2020), 1187–1221.



\bibitem[Fu90]{Fu90} Y.   Fujimoto, {\it On  Rational  Elliptic  Surfaces  with  Multiple  Fibers}, Publ.  RIMS, Kyoto Univ. 26  (1990), 1--13.
 
 
 \bibitem[Gon11]{Gon11} Y. Gongyo, \emph{On the Minimal Model Theory for DLT Pairs of Numerical Log Kodaira Dimension Zero},
Mathematical Research Letters
Volume 18 (2011)
Number 5, 991 -- 1000.

\bibitem[HMX13]{HMX13} 
C. D. Hacon, J. McKernan, and C. Xu. {\it On the birational automorphisms of varieties
of general type.} Ann. of Math. (2) 177.3 (2013), pp. 1077--1111.

\bibitem[HMX14]{HMX14}
C.~D.~Hacon, J.~M$^{\rm c}$Kernan and C.~Xu,
\emph{ACC for log canonical thresholds}, Ann. of Math. (2) \textbf{180} (2014), no. 2, 523--571.
 
\bibitem[HMX18]{HMX18}
C.~D.~Hacon, J.~M$^{\rm c}$Kernan and C.~Xu,
\emph{Boundedness of moduli of varieties of general type},
J. Eur. Math. Soc. \textbf{20} (2018), 865--901.

\bibitem[HX13]{HX13} 
C. D. Hacon,  and C. Xu. {\it Existence of log canonical closures.} Inventiones Mathematicae, volume 192, pages 161--195 (2013)


\bibitem[J21]{J21}
J. Jiao; \emph{On the boundedness of canonical models.}
 arXiv:2103.13609v1.

\bibitem[J22]{J22}
J. Jiao; \emph{Boundedness of polarised Calabi-Yau fibrations}, 
 arXiv:2202.07238.
 
\bibitem[Ka98]{Ka98} 
Y. Kawamata, Subadjunction of log canonical divisors. II, Amer. J. Math. 120
no. 5 (1998), 893--899.  

\bibitem[K21]{K21} J. Koll\'ar, \emph{Families of varieties of general type}, book in preparation,\\  http://web.math.princeton.edu/~kollar/FromMyHomePage/modbook.pdf.

\bibitem[KM98]{KM98}
J.~Koll\'ar and S.~Mori, \emph{Birational geometry of algebraic varieties},
Cambridge Tracts in Math. \textbf{134},
Cambridge Univ.\ Press, 1998.

\bibitem[Laz04]{Laz04}  R. Lazarsfeld; \emph{Positivity in algebraic geometry}. Springer (2004)

\bibitem[LO16]{LO16} J. Lesieutre and J. C. Ottem, \emph{Curves Disjoint from a Nef Divisor}, Michigan Math. J. 65 (2016), 321--332

\bibitem[Nag59]{Nag59} M. Nagata, {\it On the 14-th Problem of Hilbert},
American Journal of Mathematics, (1959), Vol. 81, No. 3,
766--772

\bibitem[Nef]{Nef}  T. Bauer, F. Campana, T. Eckl, S. Kebekus, T. Peternell, S. Rams, T. Szemberg, and L. Wotzlaw, \emph{A reduction map for nef line bundles.} Complex geometry (Göttingen, 2000), 27--36, Springer, Berlin, 2002. 

\end{thebibliography}
\end{document}